\newcommand{\be}{\begin{equation}}
\newcommand{\ee}{\end{equation}}
\newcommand{\beq}{\begin{eqnarray}}
\newcommand{\eeq}{\end{eqnarray}}
\newtheorem{thm}{Theorem}[section]
\newtheorem{conj}{Conjecture}[section]
\newtheorem{lma}{Lemma}[section]
\newtheorem{prop}{Proposition}[section]
\newtheorem{cor}{Corollary}[section]
\theoremstyle{remark}
\newtheorem{rem}{Remark}[section]
\numberwithin{equation}{section}
\def\bee{\begin{equation*}}
\def\eee{\end{equation*}}
\def\sgn{\text{\rm sgn}}
\def\C{\mathcal{C}}
\def\R{\mathbb{R}}
\def\fm{\mathfrak{m}}
\def\lf{\left}
\def\ri{\right}
\def\e{\epsilon}
\def\ol{\overline}
\def\R{\Bbb R}
\def\wt{\widetilde}
\def\Ric{\text{\rm Ric}}
\def\Pi{\overline{\displaystyle{\mathbb{II}}}}
\def\a{\alpha}
\def\b{\beta}
\def\ii{\sqrt{-1}}
\def\K{K\"ahler }
\def\Ric{\text{Ric}}
\def\lf{\left}
\def\ri{\right}
\def\a{\alpha}
\def\ol{\overline}
\def\e{\epsilon}
\def\C{\Bbb C}
\def\R{\Bbb R}
\def\CP{{\Bbb C} P}
\def\ba{{\bar{\alpha}}}
\def\Ric{\operatorname{Ric}}
\def\K{K\"ahler\ }
\def\be{\begin{equation}}
\def\ee{\end{equation}}
\def\b{\bar}
\def\ve{\varepsilon}
\def\mf{\mathfrak}
\begin{document}
\title{ K\"ahler  $C$-spaces and quadratic bisectional curvature}

\author{Albert Chau$^1$}

\address{Department of Mathematics,
The University of British Columbia, Room 121, 1984 Mathematics
Road, Vancouver, B.C., Canada V6T 1Z2} \email{chau@math.ubc.ca}

\author{Luen-Fai Tam$^2$}
\address{The Institute of Mathematical Sciences and Department of
 Mathematics, The Chinese University of Hong Kong,
Shatin, Hong Kong, China.} \email{lftam@math.cuhk.edu.hk}
\thanks{$^1$Research
partially supported by NSERC grant no. \#327637-11}
\thanks{$^2$Research partially supported by Hong Kong  RGC General Research Fund
\#CUHK 403011}

\renewcommand{\subjclassname}{%
  \textup{2000} Mathematics Subject Classification}
\begin{abstract}
In this article we give necessary and sufficient conditions for an irreducible \K $C$-space with $b_2=1$ to have nonnegative or positive  quadratic bisectional curvature, assuming the space is not Hermitian symmetric.  In the cases of the five exceptional Lie groups $E_6, E_7, E_8, F_4, G_2$, the computer package MAPLE is used to assist our calculations.  The results are related to two conjectures of Li-Wu-Zheng.

 \noindent{\it Keywords}: \K $C$-spaces, quadratic bisectional curvature

\end{abstract}
\maketitle
\markboth{Albert Chau and Luen-Fai Tam} {Quadratic orthogonal bisectional curvature}

\section{Introduction}\label{intro}

Let $(M^n,g)$ be a \K manifold of complex dimension $n$ and let $o\in M$.  $M$ is said to have   {\it nonnegative quadratic orthogonal bisectional curvature} at $o$ if for any unitary frame $e_i$ at $o$ and real numbers $\xi_i$ we have \begin{equation}\label{QBgeq0}
\sum_{i,j}R_{ i\b i j\b j}(\xi^i-\xi^j)^2\geq 0.
\end{equation}
Here $R_{ i\b i j\b j}=R(e_i,\b e_i,e_j,\b e_j)$. Recall that $M$ is said to have nonnegative   bisectional curvature at $o$ if for any $X,Y\in T_o^{(1,0)}(M)$, $R(X,\b X,Y,\b Y)\ge0$, and $M$ is said to have nonnegative {\it orthogonal}   bisectional curvature at $o$ if $R(X,\b X,Y,\b Y)\ge0$ for all unitary pairs  $X,Y\in T_o^{(1,0)}(M)$. Following \cite{LiWuZheng} we abbreviate by $QB\ge 0$ for nonnegative quadratic orthogonal bisectional curvature, $B\ge 0$ for nonnegative   bisectional curvature and $B^\perp\ge0$ for nonnegative   orthogonal    bisectional curvature. It is obvious that $B\ge 0\Rightarrow B^\perp\ge 0\Rightarrow QB\ge0$. Note that in dimension $n=2$, the conditions $B^\perp\ge0$ and $QB\ge 0$ are the same.

It is well-known that compact manifolds with $B\ge0$ have been completely classified by the works   \cite{M,SY,HowardSmythWu1981,Bando,Mok}. By these works, we know that any compact simply connected \K manifold with $B\ge0$ is either biholomorphic to $ \mathbb{CP}^n$ or is isometrically biholomorphic to an irreducible compact Hermitian symmetric space of rank at least 2. While the condition $B^\perp\ge0$ seems weaker, by the works of Chen \cite{Chen} (see also \cite{wilking}) and Gu-Zhang \cite{GZ} we know that a compact simply connected irreducible \K manifold with $B^\perp\ge0$ is also either biholomorphic to $ \mathbb{CP}^n$ or is isometrically biholomorphic to an irreducible compact Hermitian symmetric space of rank at least 2.  In this sense,  no new compact complex manifolds are introduced when we weaken the condition $B\ge0$ to the condition $B^\perp\ge0$.

  The condition $QB\ge0$ was first considered by Wu-Yau-Zheng \cite{WuYauZheng} where they proved that on a compact \K manifold with $QB\ge0$ any class in the boundary of the \K cone can be represented by a smooth closed  (1,1) form which is everywhere   non-negative.  There are other interesting properties satisfied by compact \K manifolds with $QB\ge0$.  A fundamental property of such manifolds, implicit from earlier works \cite{BG} (see \cite{CT} for additional references)  is that all harmonic real (1,1) forms are parallel.  Recently it is proved in   \cite{CT} that the scalar curvature of such a manifold must be nonnegative, and if the manifold is irreducible then the first Chern class is positive.

  The ultimate goal is to classify \K manifolds with $QB\ge0$.  For the compact case, a partial classification of the de Rham factors of the universal cover of such a manifold is given in \cite{CT}.  Hence it remains to study the structure of compact simply-connected  irreducible \K manifolds with $QB\ge0$. By the parallelness of real harmonic (1,1) forms mentioned above, such \K manifolds also have $b_2=1$ (see \cite{HowardSmythWu1981}).  In view of the above results for $B^\perp\ge0$, one may wonder if any new compact complex manifolds are introduced when we weaken the condition $B^\perp\ge0$ to the condition $QB\ge0$.  To address this, Li, Wu and Zheng \cite{LiWuZheng} constructed the first example of a simply connected irreducible compact \K manifold having $QB\ge0$ which does not support a \K metric having $B^\perp\ge0$. Their example is $(B_3,\a_2)$, the classical \K $C$-space with second Betti number $b_2=1$.
It was further conjectured in that all \K $C$-space with second Betti number $b_2=1$ must have $QB\geq 0$,  and the following conjectures were raised in \cite{LiWuZheng}:

\begin{conj}\label{conj-LWZ}
\

 \begin{itemize}
   \item [(1)] Any \K $C$-space with $b_2=1$ satisfies $QB\ge 0$ everywhere.
   \item [(2)] A compact simply connected irreducible \K manifold $(M^n,g)$ with $QB\ge0$ is biholomorphic to a \K $C$-space with $b_2=1$.
   \item [(3)] In (2), if the manifold is not $\mathbb{CP}^n$, then $g$ is a constant multiple of the standard metric.
 \end{itemize}
 \end{conj}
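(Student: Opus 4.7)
The three parts of the conjecture call for rather different techniques, so I would attack them separately and expect only part (1) to be fully amenable to the methods suggested by the present paper.

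For part (1), the plan is purely algebraic. An irreducible \K $C$-space with $b_2=1$ is classified by a pair $(G,\a_i)$ consisting of a compact simple Lie group $G$ and a simple root $\a_i$ (equivalently, a Dynkin diagram with a single marked node). The tangent space at the origin decomposes as $\bigoplus_{\a\in\Sf^+}\mathfrak{g}_\a$, where $\Sf^+$ is the set of positive roots whose expansion in simple roots has positive coefficient in front of $\a_i$. Fixing the canonical (Killing-induced) invariant \K metric, the curvature values $R(X,\b X,Y,\b Y)$ for unit root vectors $X\in\mathfrak{g}_\a$, $Y\in\mathfrak{g}_\beta$ reduce, via the standard formula for the curvature of a normal homogeneous space, to rational expressions in $\la\a,\beta\ra$ and in the structure constants of $\mathfrak{g}$. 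The QB quadratic form $Q(\xi)=\sum_{\a,\beta\in\Sf^+}R(X_\a,\b X_\a,X_\beta,\b X_\beta)(\xi_\a-\xi_\beta)^2$ then becomes a concrete quadratic form on $\R^{|\Sf^+|}$. I would analyse it case by case through the Dynkin diagrams $A_n,B_n,C_n,D_n$ (handled by hand, since the roots are explicit integer vectors) and $E_6,E_7,E_8,F_4,G_2$ (delegated to MAPLE as the abstract suggests), exploiting whatever Weyl-group symmetry survives in $\Sf^+$ to reduce the dimension before checking semi-definiteness.

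For parts (2) and (3), algebraic methods on a fixed space are inadequate; one has to go in the opposite direction and deduce structure from curvature. The strategy I would try mirrors the $B^\perp\ge 0$ story of Chen, Wilking, and Gu-Zhang. First, exploit the consequences of $QB\ge 0$ already in the literature: every real harmonic $(1,1)$-form is parallel (so $b_2(M)=1$ under irreducibility), the scalar curvature is nonnegative, and $c_1(M)>0$, by \cite{CT}. Fano together with $b_2=1$ places $M$ in a very restricted class. I would then try to adapt Mok's invariant decomposition of the curvature operator, or a maximum-principle argument along the K\"ahler-Ricci flow, aiming to show that $QB\ge 0$ is preserved and drives the metric toward one with a large isometry group, eventually forcing local homogeneity. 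Once $M$ is homogeneous, Fano, and has $b_2=1$, the classification of \K $C$-spaces completes part (2). Part (3) then follows from the fact that an irreducible compact \K $C$-space with $b_2=1$ admits a unique (up to scale) invariant \K metric, so any metric with the full symmetry forced by (2) must coincide with it.

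The main obstacle is precisely the homogeneity step in part (2). A priori there is no reason for a metric satisfying $QB\ge 0$ to be locally homogeneous: the individual curvatures $R_{i\b i j\b j}$ are not required to have any sign, and the Bochner-type identities that drove the $B^\perp\ge 0$ case use strictly more curvature information than $QB$ supplies. Even part (1), though conceptually routine, will be subtle for the exceptional groups where $|\Sf^+|$ is large and $Q$ can fail to be manifestly semi-definite when it is; indeed, if for some $(G,\a_i)$ the MAPLE computation returns $Q\not\ge 0$, that would refute part (1) as stated and instead yield a classification theorem of the type announced in the paper's abstract.
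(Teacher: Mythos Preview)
The statement you are addressing is a \emph{conjecture}, and the paper does not attempt to prove it; rather, the paper's main theorems (Theorems~\ref{mainthm} and~\ref{mainthmexceptional}) \emph{refute} parts (1) and (3). Your plan for part (1) --- a case-by-case computation of the quadratic form $Q$ through the root data, with MAPLE for the exceptional cases --- is exactly the method the paper uses, and you correctly anticipate in your final sentence that the computation might fail. It does: for instance $(B_n,\a_p)$ has $QB\ge 0$ iff $5p+1\le 4n$, so for $p$ close to $n$ the condition fails, and similar thresholds occur for $C_n$, $D_n$, and several exceptional spaces. So your approach to (1) is sound as a strategy but leads to disproof rather than proof.

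There is a genuine gap in your argument for part (3). You write that once $M$ is shown to be a \K $C$-space, ``any metric with the full symmetry forced by (2) must coincide with'' the standard one. But nothing in the hypothesis $QB\ge 0$ forces $g$ to be $G$-invariant; part (2) only concludes biholomorphism, not isometry, and the rigidity step from biholomorphism to isometry is precisely the content of (3). In fact the paper observes (Lemma~\ref{l-QB-quadractic-1} and Remark~\ref{QBperturb}) that $QB>0$ is an \emph{open} condition on the space of \K metrics, so any small non-homogeneous perturbation of the standard metric on a space with $QB>0$ still has $QB>0$. This immediately kills conjecture (3) as stated. Your proposed route through Ricci flow and Mok-type arguments for part (2) is reasonable as speculation, but the paper does not address (2), which remains open.
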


 A \K $C$-space is a compact simply connected \K manifold such that the group of holomorphic isometries acts transitively on the manifold, see \cite{Wang,Itoh}. There is a complete classification of  \K $C$-spaces with $b_2=1$, and this is associated with the classification of simple complex Lie algebras which are just $A_n= \mathfrak{sl}_{n+1}, B_n=\mathfrak{so}_{2n+1}, C_n=\mathfrak{sp}_{2n}, D_n=\mathfrak{so}_{2n}$ and the exceptional cases $E_6, E_7, E_8, F_4, G_2$.  Motivated by the work in \cite{LiWuZheng}, we establish the following Theorems related to conjectures (1) and (3). For the classical types we have the following:
\newpage
\begin{thm}\label{mainthm}
\

\begin{itemize}
\item [(i)] The \K $C$-space $(B_n,\a_p)$, $n\ge 3$, $1<p<n$ satisfies $QB\ge0$ if and only if $5p+1\le 4n$.  Moreover, $QB>0$ if and only if $5p+1< 4n$.\vskip .1cm
\item [(ii)] The \K $C$-space $(C_n,\a_p)$, $n\ge 3$, $1<p<n$ satisfies $QB\ge0$ if and only if $5p\le 4n+3$.  Moreover, $QB>0$ if and only if $5p< 4n+3$.\vskip .1cm
\item [(iii)] The \K $C$-space $(D_n,\a_p)$, $n\ge 4$, $1<p<n-1$ satisfies $QB\ge0$ if and only if $5p+3\le 4n$.  Moreover, $QB>0$ if and only if $5p+3< 4n$.
\end{itemize}
\end{thm}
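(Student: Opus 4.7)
The plan is to work at a fixed base point $o=eP$ of the homogeneous space $M=G/P$, where $G$-homogeneity reduces the pointwise $QB$ inequality (\ref{QBgeq0}) to a finite algebraic statement. Under the identification $T_o^{(1,0)}M\cong \bigoplus_{\alpha\in\Phi_M^+}\C E_\alpha$, where $\Phi_M^+$ is the set of positive roots whose expansion in simple roots has positive $\alpha_p$-coefficient, the classical root data split into a few combinatorial families. For $(B_n,\alpha_p)$ these are $e_i-e_j$ and $e_i+e_j$ with $i\le p<j$ together with $e_i$, $i\le p$, all of $\alpha_p$-height one, and $e_i+e_j$, $i<j\le p$, of $\alpha_p$-height two; the situations for $(C_n,\alpha_p)$ and $(D_n,\alpha_p)$ are analogous. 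Since $b_2=1$, the invariant K\"ahler metric is unique up to scale, and the root vectors $E_\alpha$ form a unitary frame up to a known height-dependent normalization.

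I would then invoke the standard formula for the curvature tensor of the invariant K\"ahler--Einstein metric on $G/P$, expressing $R_{\alpha\b\alpha\beta\b\beta}$ in terms of structure constants $N_{\alpha,\beta}$ and $\alpha_p$-heights. A direct consequence is that $R_{\alpha\b\alpha\beta\b\beta}=0$ unless $\alpha=\beta$ or $\alpha\pm\beta$ is a root, so the quadratic form $Q(\xi):=\sum_{\alpha,\beta}R_{\alpha\b\alpha\beta\b\beta}(\xi_\alpha-\xi_\beta)^2$ in the real variables $\xi_\alpha$ is sparse, with coefficients depending only on the combinatorial types of the roots involved. This reduces each claim in the theorem to an explicit pointwise inequality $Q(\xi)\ge 0$ on $\R^{\dim_\C M}$ governed by the root combinatorics of $(B_n,\alpha_p)$, $(C_n,\alpha_p)$ or $(D_n,\alpha_p)$.

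For the ``only if'' direction of each case I would produce test vectors that force the numerical bound. The sharp thresholds $5p+1\le 4n$, $5p\le 4n+3$, $5p+3\le 4n$ strongly suggest looking for $\xi_\alpha$ that is constant on each combinatorial family and possibly linear in one chosen index within the family; this reduces $Q$ to a low-dimensional quadratic form whose signature is easily determined. Minimizing over this ansatz both yields the sharp numerical inequality and identifies the exact configuration of $\xi$ that saturates it. Vanishing of $Q$ at this configuration in the equality case is what distinguishes $QB\ge 0$ from $QB>0$.

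The sufficiency direction is the main obstacle. Here one must show that whenever the numerical bound holds, $Q(\xi)\ge 0$ for \emph{every} $\xi$. The strategy is a sum-of-squares decomposition: group the pairs $(\alpha,\beta)$ with $\alpha\pm\beta\in\Phi$ (with care for the long/short asymmetry present in $B_n$ and $C_n$) and rewrite the corresponding contributions to $Q$ as squared linear forms in the $\xi_\alpha$; the remainder should be a diagonal quadratic form whose nonnegativity is precisely the condition $5p+1\le 4n$ (respectively $5p\le 4n+3$, $5p+3\le 4n$). Carrying out this bookkeeping uniformly in $n,p$ across the three classical families is the heart of the proof, and the strict-inequality assertions then follow by identifying the linear forms that vanish at the critical configuration found in the necessity argument and checking that strict positivity of the diagonal remainder propagates to all of $\R^{\dim_\C M}$.
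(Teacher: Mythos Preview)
Your proposal has a genuine gap in the sufficiency direction. You reduce $QB\ge 0$ to the nonnegativity of the single quadratic form
\[
Q(\xi)=\sum_{\alpha,\beta}R_{\alpha\bar\alpha\beta\bar\beta}(\xi_\alpha-\xi_\beta)^2
\]
in the Weyl frame, but this is only a \emph{necessary} condition. The definition of $QB\ge 0$ requires the inequality to hold in \emph{every} unitary frame, and a general unitary frame is not obtained from the Weyl frame by a diagonal change. Equivalently, a real $(1,1)$-form need not be diagonal in the Weyl frame, so controlling only the bisectional curvatures $R_{\alpha\bar\alpha\beta\bar\beta}$ in that frame is not enough.

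The paper closes this gap via Yau's observation (Lemma~\ref{l-QB-quadractic-1}) that $QB\ge 0$ is equivalent to nonnegativity of the operator $G-F$ on all real $(1,1)$-forms. Because $R(e_\alpha,\bar e_\alpha,e_\gamma,\bar e_\delta)=0$ whenever $\gamma\neq\delta$ in a Weyl frame (Lemma~\ref{l-Weylframe}), $G-F$ decouples into \emph{two} independent eigenvalue problems (Corollary~\ref{c-QB}): the form $M_1=\sum_{A,B}R_{A\bar A B\bar B}x_Ax_B$, which is exactly your $Q$, and a second form
\[
M_2=\sum_{A\neq B,\,C\neq D}R_{A\bar B C\bar D}\,x_{AB}\,x_{CD},\qquad \overline{x_{AB}}=x_{BA},
\]
built from the off-diagonal curvatures. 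One must show that the largest eigenvalue of $M_2$ is also at most the Einstein constant~$\mu$. In the paper this is Lemma~\ref{l-B-eigenvalue-2} (and its $C_n$, $D_n$ analogues), a substantial case-by-case row-sum estimate that your outline omits entirely. Your necessity argument and your treatment of $M_1$ are in the right spirit, but without the $M_2$ analysis the sufficiency direction does not go through.
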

For the exceptional cases, we have the following:
\begin{thm}\label{mainthmexceptional}
\

\begin{itemize}
\item [(i)] The \K $C$-space $(G_2,\a_2)$, satisfies $QB>0$. \vskip .1cm
\item [(ii)] The \K $C$-space $(F_4,\a_p)$, $1\leq p\leq 4$ satisfies $QB\geq 0$ iff $p=1,2,4$ in which cases $QB >0$.\vskip .1cm
\item [(iii)] The \K $C$-space $(E_6,\a_p)$, $2\leq p\leq 5$ satisfies $QB\geq 0$ iff $p=2,3,5$ in which cases $QB >0$.\vskip .1cm
\item [(iv)] The \K $C$-space $(E_7,\a_p)$, $1\leq p\leq 6$ satisfies $QB\geq 0$ iff $p=1,2,5$ in which cases $QB >0$.\vskip .1cm
\item [(v)] The \K $C$-space $(E_8,\a_p)$, $1\leq p\leq 8$ satisfies $QB\geq 0$ iff $p=1,2, 8$ in which cases $QB >0$.\vskip .1cm
\end{itemize}
\end{thm}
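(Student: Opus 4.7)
My plan is to reduce, case by case, the $QB\geq 0$ question to checking positive semidefiniteness of an explicit symmetric matrix built from the root datum of $G$. For an exceptional pair $(G,\alpha_p)$ in the theorem, the holomorphic tangent space at the base point decomposes as $\mathfrak{m}^+=\bigoplus_{\beta\in\Delta_{\mathfrak{m}}^+}\mathbb{C}E_\beta$, where $\Delta_{\mathfrak{m}}^+$ consists of the positive roots whose simple-root expansion has a strictly positive coefficient at $\alpha_p$. Under the standard normalization in which $\{E_\beta\}_{\beta\in\Delta_{\mathfrak{m}}^+}$ is a unitary frame, one has a closed form $R_{\beta\bar\beta\gamma\bar\gamma}=c_{\beta\gamma}$, with $c_{\beta\gamma}$ a rational expression in the root inner products $(\beta,\gamma)$ and the squared structure constants $|N_{\beta,\gamma}|^2$. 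Substituting this into \eqref{QBgeq0} converts $QB\geq 0$ into positive semidefiniteness of the symmetric matrix $A=A(G,\alpha_p)$ of size $|\Delta_{\mathfrak{m}}^+|$ read off from the $c_{\beta\gamma}$, and $QB>0$ into the statement that the kernel of $A$ is exactly the line $\mathbb{R}(1,\ldots,1)$ on which the quadratic form trivially vanishes.

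With this framework fixed, I would load the root data for $G_2,F_4,E_6,E_7,E_8$ in the Bourbaki realization, enumerate the positive roots, and for each $\alpha_p$ appearing in the theorem extract $\Delta_{\mathfrak{m}}^+$ together with all pairwise inner products. The squared structure constants $|N_{\beta,\gamma}|^2$ are read off from the $\beta$-strings through $\gamma$, so no choice of Chevalley signs is needed. Feeding this data into MAPLE assembles the symmetric matrix $A$, whose signature is then determined exactly, either from the characteristic polynomial or, more practically for large sizes, via a Sylvester-type principal minor test.

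For the affirmative cases $(G_2,\alpha_2)$, $(F_4,\alpha_p)$ with $p\in\{1,2,4\}$, $(E_6,\alpha_p)$ with $p\in\{2,3,5\}$, $(E_7,\alpha_p)$ with $p\in\{1,2,5\}$, and $(E_8,\alpha_p)$ with $p\in\{1,2,8\}$, the conclusion $QB>0$ amounts to MAPLE certifying that the restriction of $A$ to the orthogonal complement of $(1,\ldots,1)$ is strictly positive definite. For each remaining exceptional pair, where the theorem claims failure of $QB\geq 0$, I would read off a numerical negative eigenvector of $A$ from MAPLE, round its entries to small integers, and then verify symbolically that the resulting real vector $\xi$ satisfies $\sum_{i,j}R_{i\bar i j\bar j}(\xi^i-\xi^j)^2<0$, thereby producing a rigorous counterexample rather than a floating-point assertion.

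The main obstacle is the scale of the computation in the $E_8$ family, where $|\Delta_{\mathfrak{m}}^+|$ can reach $57$ and every entry of $A$ involves a nontrivial slice of the $E_8$ root combinatorics. A more delicate secondary difficulty is the alignment of conventions: the overall constant in the curvature formula, the normalization of the Killing form that realizes the standard metric, and the closed form for $|N_{\beta,\gamma}|^2$ must all match those of \cite{Itoh,Wang}, since any single inconsistency would yield a misleading spectrum. Once this bookkeeping is validated by a sanity check against a small classical case already settled by Theorem \ref{mainthm}, the exceptional cases reduce to a finite list of routine and mechanical MAPLE verifications.
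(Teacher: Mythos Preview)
Your proposal has a genuine gap in the affirmative cases. The condition $QB\geq 0$ requires that $\sum_{i,j}R_{i\bar i j\bar j}(\xi^i-\xi^j)^2\geq 0$ hold for \emph{every} unitary frame at the base point, not only for the Weyl frame $\{E_\beta\}_{\beta\in\Delta_{\mathfrak m}^+}$. Your matrix $A$ encodes only the bisectional curvatures $R_{\beta\bar\beta\gamma\bar\gamma}$ in that fixed frame; verifying its positive semidefiniteness certifies the inequality in that frame alone. Under a unitary change of frame the off-diagonal components $R_{\alpha\bar\beta\gamma\bar\delta}$ with $\alpha\neq\beta$ enter, and these are in general nonzero (Lemma~\ref{l-curvature-formula-3}). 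The paper handles this by passing to the equivalent formulation $G-F\geq 0$ on real $(1,1)$-forms (Lemma~\ref{l-QB-quadractic-1}), which in the Weyl frame decouples, thanks to Lemma~\ref{l-Weylframe}, into \emph{two} quadratic forms: your $M_1=\sum R_{A\bar A B\bar B}x_Ax_B$ on the diagonal components, and a second form $M_2=\sum_{A\neq B,\,C\neq D}R_{A\bar B C\bar D}x_{AB}x_{CD}$ on the off-diagonal ones (Corollary~\ref{c-QB}). Establishing $QB>0$ requires bounding the top eigenvalue of $M_2$ strictly below the Einstein constant $\mu$ as well, and this is where most of the computational effort in \S\ref{section-exceptional} lies: since the signs of the structure constants $N_{\alpha,\beta}$ are not easily pinned down, the paper bounds $|R_{A\bar B C\bar D}|$ from above via the auxiliary matrix $Z$ of \eqref{Z} and then applies row-sum estimates (Lemmas~\ref{l-eigenvalue} and \ref{weightedrowsumsZ}).

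For the negative cases your method is sound: exhibiting a vector $\xi$ with $\sum R_{i\bar i j\bar j}(\xi^i-\xi^j)^2<0$ in the Weyl frame already falsifies $QB\geq 0$, and indeed the paper disposes of those cases precisely by showing that an eigenvalue of $M_1$ exceeds $\mu$. But for the positive claims you must add the $M_2$ analysis; without it the argument is incomplete.
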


 We only consider \K $C$-spaces which are not Hermitian symmetric. According to Itoh \cite{Itoh}, Theorem \ref{mainthm} and \ref{mainthmexceptional} includes all such \K $C$-spaces with $b_2=1$.  Here $QB>0$ means that \eqref{QBgeq0} is a strict inequality unless all $\xi_i$ are the same. Note that if $QB>0$, then a small perturbation of the \K metric will still satisfy $QB>0$, see Lemma \ref{l-QB-quadractic-1} (and Remark \ref{QBperturb}). Hence conjecture (1) for the classical types is true only under some restrictions mentioned in Theorem \ref{mainthm}, while conjecture (3) is too strong. Conjecture (2) however may still be true in general.

Theorems \ref{mainthm} and \ref{mainthmexceptional} give more information on the curvature   properties   of \K $C$-spaces with $b_2=1$. It is well-known that $\CP^n$ has $B>0$ and Hermitian symmetric spaces with rank at least 2 have $B\ge0$ but not $B>0$. All other \K $C$-spaces which are non-Hermitian symmetric spaces do not have $B\ge0$ or even $B^\perp\ge0$. On the other hand, Itoh \cite{Itoh} proved that a  \K $C$-space with $b_2=1$ is Hermitian symmetric space if and only if its curvature operator has at most two distinct eigenvalues. Our results show that as far as the sign of curvature is concerned,  \K $C$-spaces with $b_2=1$ which are not Hermitian symmetric are further divided into two groups: some of them satisfy $QB\ge0$ and other do not have such property.

  We give here an idea of the proof and refer to \S \ref{section-basic} for details.  Consider a Lie algebra as above, and a corresponding system $\Delta\subset \R^n$ of root vectors in $\R^n$ where the induced Killing form is induced by the standard Euclidean inner product.  Then each associated  \K $C$-space corresponds to a certain subset of $\mathfrak{m}^+\subset \Delta$ representing a unitary frame in which curvature approximation reduces to taking sums and inner products of the vectors in $\mathfrak{m}^+\subset \Delta$ (we can calculate exact values in the case of bisectional curvatures).   We combine this with symmetry, counting and eigenvalue estimate arguments to obtain Theorem \ref{mainthm}.  For the five exceptional cases in Theorem \ref{mainthmexceptional}, the computer package MAPLE was used to assist our calculations and details are provided in the appendix.  Theorem \ref{mainthm} was proved in an earlier version of this article \cite{CT2}, and while similar in spirit, our proof here is somewhat simpler eliminating the need for many calculations from the appendix of \cite{CT2}.

The organization of the paper is as follows. In \S \ref{section-basic} we will state basic properties and formulae for \K $C$-spaces which will be used throughout the paper.  We will then discuss the condition $QB\ge0$ and $QB>0$ in general, then in relation to the \K $C$-spaces.  In \S \ref{section-classical} we prove Theorem \ref{mainthm} for the classical \K $C$-spaces; details for some of the calculations in these sections can be found in the appendices of  \cite{CT2} which is an earlier version of this article.  In \S \ref{section-exceptional} we present the details of our results on Theorem \ref{mainthmexceptional} for the exceptional \K $C$-spaces, with details of our use of MAPLE provided in the appendix.

The authors would like to thank F. Zheng for valuable comments and interest in this work.

\section{basic facts}\label{section-basic}

\subsection{The \K C-spaces and curvature formulae}\label{section-C-spaces}

 Consider a compact \K $C$-space   $(M,\omega)$ with transitive holomorphic  isometry group $G$, and suppose $b_2(M)=1$.  Then any real $(1,1)$ form $\rho$ on $M$ is given by $\rho=c\omega + \sqrt{-1}\partial\overline{\partial} f$ for some constant $c$ and function $f$ where $\omega$ is the \K form.  Now if $\rho$ is $G$ invariant then $\Delta_g f$ is also  $G$ invariant  and hence constant on $M$.  Thus $f$ is constant on $M$ and  $\rho=c\omega$.  In particular, $g$ is the unique $G$ invariant \K metric on $M$ and it is \K Einstein. For more discussions on \K $C$-space, see \cite{Besse,Itoh,Wang,LiWuZheng}.

 \K $C$-spaces with second Betti number $b_2=1$ are obtained as follows, see \cite{Borel-1, Borel-2, BH, Itoh, LiWuZheng, Wang}. Let $G$ be a simply connected, \ complex Lie group, and let $\mathfrak{g}$ be its Lie algebra with Cartan subalgebra $\mathfrak{h}$ and corresponding root system $\Delta\subset \mf h^*$. Then $\mathfrak{g}=\mathfrak{h}\oplus\bigoplus_{\a\in \Delta} \C E_\a$, where $E_\a$ is a root vector of $\a$. Let $l=\dim_\C \mathfrak{h}$ and
fix a fundamental root system $\a_1,\dots,\a_l \subset \Delta$. This gives an ordering of roots in $\Delta$. Let $\Delta^+$ and $\Delta^-$ be the set of positive and negative roots respectively. Let $K$ be the Killing form for $\mathfrak{g}$. Then we may choose root vectors  $\{E_\a\},\a \in \Delta$ such that
$$
K(E_\a,E_{-\a})=-1, \a\in \Delta^+; \ \ [E_\a,E_\beta]=n_{\a,\beta}E_{\a+\beta}
$$
 such that $n_{\a,\beta}=n_{-\a,-\beta}\in \mathbb{R}$ with $n_{\a,\beta}=0$ if $\a+\beta$ is not a root. Together with a suitable basis in $\mathfrak{h}$, they form a {\it Weyl canonical basis} for $\mathfrak{g}$.  Now for any $1\le r\le l$ and let
$$
\Delta_r^+(k)=\{\sum_i n_i\a_i\in \Delta^+|\ n_r=k\},  \,\,\, \Delta^+_r=\bigcup_{k>0}\Delta^+_r(k).
$$
Let $P$ be the subgroup whose Lie algebra is $\mathfrak{h}\oplus\bigoplus_{\a\in \Delta\setminus \Delta_r^+} \C E_\a$.  Then $G/P$ is a complex homogeneous space having $b_2 =1$.  Now let $$\mathfrak{m}_k^+=\bigoplus_{\a\in \Delta_r^+(k)}\C E_\a, \,\,\, \mathfrak{m}_k^-=\bigoplus_{\a\in \Delta_r^-(k)}\C E_\a, \,\,\, \mathfrak{t}=\mathfrak{h}\oplus\bigoplus_{\a\in \Delta^+(0)}(\C E_\a\oplus \C E_{-\a}).$$Then $\mathfrak{m}^+= \bigoplus_{k>0}\mathfrak{m}_k^+$ can be identified with the tangent space of $G/P$.  As given in \cite{Borel-1, Itoh, LiWuZheng}, the $G$-invariant \K form on $G/P$ is given by:
\begin{lma}\label{l-metric} \hspace{12pt}

 \begin{itemize}
      \item [(i)] In a Weyl canonical basis, let $\omega^\a$, $\omega^{\b\a}$  be the dual of $E_\a$ and $\b E_\a := -E_{-\alpha}$, $\a\in \Delta_r^+$. The $G$ invariant \K form on $G/P$ is

       $$
          g=2\sum_{k>0}k\sum_{\a\Delta_r^+(k)}\omega^\a\cdot \omega^{\b\a}=\sum_{k>0}(-kK)|_{\mathfrak{m}^+_k\times \mathfrak{m}^-_k}.
          $$
      \item [(ii)] $[\mathfrak{t},\mathfrak{m}_k^\pm ]\subset \mathfrak{m}_k^\pm$, $[\mathfrak{m}_k^\pm,\mathfrak{m}_l^\pm ]\subset \mathfrak{m}_{k+l}^\pm$, $[\mathfrak{m}_k^+ ,\mathfrak{m}_k^- ]\subset \mathfrak{t} $. If $k>l>0$, then
          $[\mathfrak{m}_k^+,\mathfrak{m}_l^- ]\subset \mathfrak{m}_{k-l}^+$, $[\mathfrak{m}_k^-,\mathfrak{m}_l^+ ]\subset \mathfrak{m}_{k-l}^-$.
    \end{itemize}
    \end{lma}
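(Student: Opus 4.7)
The plan is to treat part (ii) first as a combinatorial consequence of the grading of $\Delta$ by $r$-th coefficient, and then to use (ii) together with a classical Borel-type argument to establish (i).

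For part (ii), I would use the Weyl-basis identity $[E_\alpha,E_\beta]=n_{\alpha,\beta}E_{\alpha+\beta}$ (zero unless $\alpha+\beta\in\Delta$) together with the obvious additivity of the $r$-th coefficient under root addition, and the fact that $[\mathfrak{h},E_\alpha]\subset\mathbb{C} E_\alpha$. For instance, if $\alpha\in\Delta_r^+(k)$ and $\beta\in\Delta^+(0)$, then $\alpha+\beta$ (when a root) still has $r$-th coefficient equal to $k$, giving $[\mathfrak{t},\mathfrak{m}_k^+]\subset\mathfrak{m}_k^+$. The remaining inclusions $[\mathfrak{m}_k^\pm,\mathfrak{m}_l^\pm]\subset\mathfrak{m}_{k+l}^\pm$, $[\mathfrak{m}_k^+,\mathfrak{m}_k^-]\subset\mathfrak{t}$, and $[\mathfrak{m}_k^\pm,\mathfrak{m}_l^\mp]\subset\mathfrak{m}_{k-l}^\pm$ for $k>l>0$ follow in exactly the same way from additive bookkeeping on $r$-th coefficients.

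For part (i), I would identify $\mathfrak{m}^+$ with the holomorphic tangent space $T_o^{(1,0)}(G/P)$ at the origin $o=eP$, and recall from the opening of Section \ref{section-basic} that $b_2(G/P)=1$ forces the $G$-invariant K\"ahler form to be unique up to overall scale. The task then reduces to checking that $g:=\sum_{k>0}(-kK)|_{\mathfrak{m}_k^+\times\mathfrak{m}_k^-}$ is a well-defined, positive-definite, $\mathrm{Ad}_P$-invariant Hermitian form whose associated real $(1,1)$ form is closed. Well-definedness and positive definiteness follow from the normalization $K(E_\alpha,E_{-\alpha})=-1$ together with the $K$-orthogonality of root spaces for non-opposite roots, and the equivalence with $2\sum_k k\sum_{\alpha\in\Delta_r^+(k)}\omega^\alpha\cdot\omega^{\bar\alpha}$ is then a direct dual-basis computation. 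The $\mathrm{Ad}_P$-invariance reduces, by connectedness of $P$ and part (ii), to $\mathrm{ad}_{\mathfrak{t}}$-invariance of each summand, which is immediate from $\mathrm{Ad}$-invariance of $K$.

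The main technical obstacle is the closedness of the associated fundamental form. By Koszul's formula together with the bracket relations in (ii), $d\omega=0$ evaluated on triples drawn from $\mathfrak{m}_a^\pm\oplus\mathfrak{m}_b^\pm\oplus\mathfrak{m}_c^\pm$ imposes a linear relation among the weights assigned to each graded piece, and this relation forces (up to overall scale) precisely the weight $k$ on $\mathfrak{m}_k^\pm$. This is the classical normalization theorem for invariant K\"ahler metrics on $G/P$ with $b_2=1$, and rather than reproducing the computation I would simply cite \cite{Borel-1, Wang, Itoh, LiWuZheng}.
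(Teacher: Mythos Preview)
The paper does not actually prove this lemma: it is stated with the preface ``As given in \cite{Borel-1, Itoh, LiWuZheng}, the $G$-invariant K\"ahler form on $G/P$ is given by:'' and no argument follows. So your proposal, which sketches part (ii) via the additivity of the $r$-th coefficient under root addition and outlines the invariance and closedness checks for part (i) before ultimately citing \cite{Borel-1, Wang, Itoh, LiWuZheng}, is strictly more detailed than what the paper provides. Your sketch is correct and aligns with the standard arguments in those references; in particular your observation that the weight $k$ on $\mathfrak{m}_k^\pm$ is forced (up to scale) by $d\omega=0$ via the bracket relations in (ii) is exactly the mechanism in Borel's and Itoh's treatments.
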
The \K $C$ space thus obtained is denoted as $(\mathfrak{g}, \alpha_r)$.  Conversely, every \K $C$ space with $b_2 =1$ can be obtained by the construction.  Thus the set  $\{e_{\a}:=1/{\sqrt k}E_\a\}$; $\a\in \Delta_k^+$, $k\ge 1$ forms a unitary basis for the tangent space of $(\mathfrak{g},\a_r)$ in the metric $g$. We call this basis as a {\it Weyl frame}.   To compute the curvature tensor in this frame we have the following  from Li-Wu-Zheng \cite[Proposition 2.1]{LiWuZheng}, using the method in \cite{Itoh}. For the sake of completeness, we give a proof.

\begin{prop}\label{prop-LiWuZheng}[Li-Wu-Zheng]  Let $X^i \in \fm^+_i, Y^j \in \fm^+_j, Z^k \in \fm^+_k, W^l  \in \fm^+_l$.  Suppose $i+k=j+l$. Then
\be
\begin{split}
R(X^i,\b Y^j, &Z^k,\b W^l)= \lf((k-j)\xi_{k-j}-\frac{kl}{i+k}\ri)K([X^i,Z^k],[\b Y^j,\b W^l]) \\
&+\lf(- (k-j)\xi_{k-j}+k\xi_{i-j}+l\xi_{j-i}+l\delta_{ij}\delta_{kl}\ri)K([X^i,\b Y^j],[Z^k,\b  W^l]).\\
\end{split}
\ee
 $R(X^i,\b Y^{j},Z^k,\b W_l)=0$ if $i+k\neq j+l$.
\end{prop}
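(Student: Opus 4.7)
The plan is to compute $R$ at the base point $o=eP$ directly from the Lie-algebra data by first deriving an explicit formula for the Levi-Civita connection $\nabla$ evaluated at $o$, then reading off $R$ from the identity
\[
R(X,\b Y)Z = \nabla_X \nabla_{\b Y} Z - \nabla_{\b Y} \nabla_X Z - \nabla_{[X, \b Y]} Z.
\]
The vanishing when $i+k\ne j+l$ is essentially free: the fundamental root $\a_r$ restricts to a character of the Cartan torus $T\subset P$ under which $\fm^\pm_k$ has weight $\pm k$. Since $g$, $J$, and hence $R$ are $G$-invariant, in particular $T$-equivariant, $R$ must preserve the total $T$-weight of its arguments; the four arguments of $R(X^i,\b Y^j,Z^k,\b W^l)$ carry weights $i,-j,k,-l$, forcing $i+k=j+l$ whenever the value is nonzero.

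For the connection, I would lift each $X\in\fm^+$ to a fundamental Killing vector field on $G/P$ and apply Nomizu's formula
\[
\nabla_{X^*}Y^*\big|_o = -\tfrac12[X,Y]_{\fm} + U(X,Y),
\]
with the symmetric correction $U$ determined by $2g(U(X,Y),Z) = g([Z,X]_{\fm},Y) + g(X,[Z,Y]_{\fm})$ for all $Z\in\fm$. Using the metric formula $g|_{\fm^+_k\times\fm^-_k}=-kK$ of Lemma \ref{l-metric}(i), the grading rules of Lemma \ref{l-metric}(ii), and the associativity of $K$, each inner product on the right collapses into a single Killing-form pairing with a rational coefficient in the grades. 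For example, for $X\in\fm^+_i$ and $Y\in\fm^+_j$ one obtains $\nabla_XY = \frac{i}{i+j}[X,Y]\in\fm^+_{i+j}$, with analogous explicit formulas for $\nabla_X\b Y$ whose type decomposition is dictated by the sign of $i-j$. The symbols $\xi_m$ appearing in the statement are precisely the indicators of these three sign cases, the $m=0$ one accounting for the Kronecker-delta contribution.

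Substituting these connection formulas into $\nabla_X\nabla_{\b Y}Z - \nabla_{\b Y}\nabla_X Z - \nabla_{[X,\b Y]}Z$ produces two qualitatively different pairings: a holomorphic bracket $[X^i,Z^k]\in\fm^+_{i+k}$ paired against $[\b Y^j,\b W^l]\in\fm^-_{j+l}$, and a mixed bracket $[X^i,\b Y^j]$ (lying in $\fm^\pm_{|i-j|}$ or $\mathfrak t$) paired against $[Z^k,\b W^l]$. Repeated use of the Jacobi identity and the invariance of the Killing form then consolidates all remaining cross-terms into precisely these two basic pairings, producing the stated formula.

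The main obstacle is the bookkeeping in this last step: the rescaling factors $-k$ that $g$ introduces on different graded pieces accumulate into rational coefficients that must be carried through the three sign cases of the mixed bracket $[X^i,\b Y^j]$. Arranging the Jacobi manipulations so that the accumulated coefficients collapse into the compact expressions $(k-j)\xi_{k-j}-kl/(i+k)$ and $-(k-j)\xi_{k-j}+k\xi_{i-j}+l\xi_{j-i}+l\delta_{ij}\delta_{kl}$, rather than an unwieldy sum over cases, is the only nontrivial piece; once the algebraic identities are set up correctly the reduction is mechanical.
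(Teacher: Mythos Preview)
Your approach is essentially the same as the paper's: the paper quotes Itoh's curvature formula for K\"ahler $C$-spaces, in which the connection at $o$ is already packaged as the operator $\Lambda(U)V=\tfrac{n'}{n+n'}[U,V]_{\fm^+}$ for $U\in\fm^+_n$, $V\in\fm^+_{n'}$ (together with $\Lambda(\b U)V=[\b U,V]_{\fm^+}$), and then carries out exactly the Jacobi and Killing-form reduction you outline to collapse everything into the two pairings $K([X,Z],[\b Y,\b W])$ and $K([X,\b Y],[Z,\b W])$. Your plan simply rederives $\Lambda$ via Nomizu before plugging in. One caution: Itoh's formula reads $\Lambda(X)Y=\tfrac{j}{i+j}[X,Y]$ for $X\in\fm^+_i$, $Y\in\fm^+_j$, not $\tfrac{i}{i+j}[X,Y]$ as in your example, so reconcile your fundamental-vector-field sign convention with this before the bookkeeping step, since the swap would alter the final coefficients.
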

\begin{proof}
  Note that $g(U,\b V)=-kK(U,\b V)$, on $\fm_k^+\times   \fm_k^{-}$ etc.

Then \cite[p.43]{Itoh}
\be
\begin{split}
R(X^i,\b Y^{j},Z^k,\b W_l)=&g(R(X^i, \b Y^{j}) Z^k, \b W^l)\\
=&g([\Lambda(X^i), \Lambda(\b Y^{j})] Z^k, \b W^l)-g(\Lambda([X^i,\b Y^j]_{\fm})Z^k,\b W^l)\\
&-g([[X^i,\b Y^j]_{\mathfrak{t}},Z^k],\b W^l),
\end{split}
\ee
where $\Lambda(U)V =n'/(n+n')[U,V]_{\fm^+} $ if $U\in \fm^+_n, V\in \fm^+_{n'}$, and $\Lambda(\b U)V=[\b U,V]_{\fm^+}, $ for all $U, V\in \fm^+$, see \cite[p.45]{Itoh}.  Here $[U,V]_{\fm^+}$ is the component of $[U,V]$ in $\fm^+$.
Now if $i+k=j+l$,
\be
\begin{split}
 [\Lambda(X^i), \Lambda(\b Y^{j})] Z^k=&\lf(\Lambda(X^i)\Lambda(\b Y^{j})- \Lambda(\b Y^{j})\Lambda(X^i)\ri)Z^k\\
 =&\Lambda(X^i)([\b Y^j,Z^k]_{\fm^+})- \frac k{i+k}\Lambda(\b Y^{j})([X^i,Z^k])\\
 =&\frac{(k-j)}l\xi_{k-j} [X^i,[\b Y^j,Z^k]]-\frac k{i+k}[\b Y^{j},[X^i,Z^k]]_{\fm^+} \ \
\end{split}
\ee
Here   each term is in $\fm^+_l$.
Hence
\be
\begin{split}
g([\Lambda(X^i)&, \Lambda(\b Y^{j})] Z^k, \b W^l)\\
=& -(k-j) \xi_{k-j}K ([X^i,[\b Y^j,Z^k]],\b W^l)+\frac {kl}{i+k}K([\b Y^{j},[X^i,Z^k]],\b W^l)\\
=& -(k-j) \xi_{k-j}K( X^i,[[\b Y^j,Z^k] ,\b W^l])+\frac {kl}{i+k}K([\b Y^{j},[X^i,Z^k]],\b W^l)\\
=& (k-j) \xi_{k-j}K( X^i,[ [ \b W^l,\b Y^j ],Z^k ]+[[ Z^k,\b W^l]],\b Y^j)\\ &+\frac {kl}{i+k}K([ \b W^l,\b Y^{j}],[X^i,Z^k]])\\
=& -(k-j) \xi_{k-j}K([X^i,\b Y^j],[ Z^k,\b W^l]) \\&
+\lf((k-j) \xi_{k-j}-\frac {kl}{i+k}\ri)K( [X^i,  Z^k],[\b Y^j  ,\b W^l] ).
\end{split}
\ee
Now
$
[X^i,\b Y^j]_{\fm}$ is in $\fm_{i-j}^+$ if $i>j$, and $\fm_{j-i}^-$ if $j>i$, and is 0 if $i=j$. So
$$
g(\Lambda([X^i,\b Y^j]_{\fm})Z^k,\b W^l)=-\lf(k\xi_{i-j}+l\xi_{j-i}\ri)K\lf([X^i,\b Y^j],[Z^k,\b W^l]\ri).
$$
Also, $[X^i,\b Y^j]_{\mathfrak{t}}=0$ unless $i=j$. If $i=j$, then $[[X^i,\b Y^j]_{\mathfrak{t}},Z^k]\in \fm_k^+$. Hence
$$
g([[X^i,\b Y^j]_{\mathfrak{t}},Z^k],\b W^l)= \delta_{ij}\delta_{kl}g([[X^i,\b Y^j] ,Z^k],\b W^l)=-l\delta_{ij}\delta_{kl}K\lf([X^i,\b Y^j],[Z^k,\b W^l]\ri).
$$
Also, $R(X^i,\b Y^{j},Z^k,\b W_l)=0$ if $i+k\neq j+l$.
\end{proof}

\begin{lma}\label{l-Weylframe} Same notations as in Proposition \ref{prop-LiWuZheng}.   Assume also that $X, Y, Z, W$ are canonical Weyl basis vectors. Suppose $X=Y$, and $Z\neq W$, then $$R(X,\b X, Z,\b W)=0.$$
\end{lma}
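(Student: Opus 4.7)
The plan is to apply Proposition~\ref{prop-LiWuZheng} and analyze the resulting Killing-form pairings via the root-space decomposition of $\mathfrak{g}$. Write $X = Y = E_\a$ and $Z = E_\beta$, $W = E_\g$ in the Weyl canonical basis, with $\beta \neq \g$. Since $X = Y$ forces $i = j$, the last sentence of Proposition~\ref{prop-LiWuZheng} lets me assume $k = l$, i.e.\ $\beta, \g \in \Delta_r^+(k)$, as otherwise $R = 0$ automatically. Under these assumptions the formula expresses $R(X,\b X,Z,\b W)$ as a linear combination of only two Killing-form pairings
\[
K\bigl([X,Z],[\b X,\b W]\bigr) \quad \text{and}\quad K\bigl([X,\b X],[Z,\b W]\bigr),
\]
and my strategy is to show that each pairing vanishes independently of its coefficient.

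For the first pairing, I use $[E_\mu, E_\nu] = n_{\mu,\nu} E_{\mu+\nu}$ together with $\b E_\mu = -E_{-\mu}$ to see that $[X,Z]$ is a scalar multiple of $E_{\a+\beta}$ and $[\b X, \b W]$ a scalar multiple of $E_{-\a-\g}$ (either factor possibly zero). Since the Killing form satisfies $K(E_\mu, E_{-\nu}) = 0$ unless $\mu = \nu$, a nonzero pairing would force $\a + \beta = \a + \g$, i.e.\ $\beta = \g$, contradicting $Z \neq W$.

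For the second pairing, $[X, \b X] = -[E_\a, E_{-\a}]$ lies in the Cartan subalgebra $\mathfrak{h}$, whereas $[Z, \b W]$ is a scalar multiple of $E_{\beta-\g}$ (or zero) and hence lies in a root space since $\beta \neq \g$. The Killing form vanishes between $\mathfrak{h}$ and any nontrivial root space: for $H \in \mathfrak{h}$ and $E \in \mathfrak{g}_\delta$ with $\delta \neq 0$, the composition $\operatorname{ad}(H)\operatorname{ad}(E)$ sends each root space $\mathfrak{g}_\mu$ into $\mathfrak{g}_{\mu+\delta}$ and thus has zero diagonal, so its trace vanishes. Hence this pairing is also zero, completing the argument. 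I do not anticipate any substantive obstacle here — the proof reduces to a short root-space bookkeeping once Proposition~\ref{prop-LiWuZheng} is in hand.
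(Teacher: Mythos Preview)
Your proof is correct and follows essentially the same approach as the paper: reduce to $k=l$ via Proposition~\ref{prop-LiWuZheng}, then kill the two Killing-form pairings separately using the root-space decomposition. Your treatment is in fact a bit more streamlined than the paper's: the paper splits into the cases $k\le i$ and $i<k$ to write down explicit coefficients, and for the second pairing it invokes the invariance identity $K([X,\b X],[Z,\b W])=K(X,[\b X,[Z,\b W]])$ before analyzing root spaces, whereas you bypass both steps by observing directly that $[E_\a,E_{-\a}]\in\mathfrak{h}$ is Killing-orthogonal to every nontrivial root space.
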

\begin{proof}

Since $i=j$, the lemma is true if $k\neq l$ by Proposition \ref{prop-LiWuZheng}. Hence we assume $k=l$. We first  assume that $k\le i$. Then
$$
R(X,\b X, Z,\b W)=-\frac{k^2}{i+k}K([X,Z],[\b X,\b W])+kK([X,\b X],[Z,\b W]).
$$
Now let $X=E_{\a}$, $Z=E_\beta$, $W=E_\gamma$ with $\beta\neq \gamma$. Note that $\b E_\a=-E_{-\a}$.  Then $[X,Z]=n_{\a,\beta}E_{\a+\beta}$, $[  X,  W]=n_{\a,\gamma}E_{ \a+\gamma}$. Hence
 $$K([X,Z],[\b X,\b W])=-n_{\a,\beta}n_{\a,\gamma}K(E_{\a+\beta},E_{-\a-\beta}).$$
 If $ \a+\beta $ or $ \a+\gamma $ is not a root, then $n_{\a,\beta}=0$ or $n_{\a,\gamma}=0$ and $ K([X,Z],[\b X,\b W])=0$. Otherwise,
 both $E_{\a+\beta}$ and $E_{\a+\gamma}$ are canonical Weyl basis vectors and  are in $ \mathfrak{m}^+_{i+k}$ by Lemma \ref{l-metric}. Since $\beta\neq \gamma$,  and $K$ is proportional to $g$ on $ \mathfrak{m}^+_{i+k}\times \mathfrak{m}^-_{i+k}$, we also have $K([X,Z],[\b X,\b W])=0.$

On the other hand, by the fact that $K([x,y],z)=K(z,[y,z])$, we have
\be\label{Killing_jacobi}
K([X,\b X],[Z,\b W])=K( X,[\b X,[Z,\b W]]).
\ee
Now $[Z,\b W]=n_{\beta,-\gamma}E_{\beta-\gamma}$, $[\b X,[Z,\b W]]=n_{-\a, \beta-\gamma}n_{\beta,-\gamma}E_{-\a+\beta-\gamma}  $. If $\beta-\gamma$ or $-\a+\beta-\gamma$ is not a root, then as before we have $K( X,[\b X,[Z,\b W]])=0$. Otherwise, by Lemma \ref{l-metric}, $[Z,\b W]\in \mathfrak{t}$ and  $[\b X,[Z,\b W]]\in \mathfrak{m}_i^-$.
Since $-\a+\beta-\gamma\neq -\a$, so as before
$$
K([X,\b X],[Z,\b W])=K([X,[\b X,[Z,\b W])=0.
$$
Hence the lemma is true when $k\le i$.

Suppose $i<k$. Then it is equivalent to prove $R(X,\b Y,Z,\b Z)=0$ but assuming $i>k$ and $X\neq Y$.  In this  case,
$$
R(X,\b Y, Z,\b Z)=-\frac{k^2}{i+k}K([X,Z],[\b Y,\b Z])+kK([X,\b Y],[Z,\b Z]).
$$
The previous argument implies the lemma is true in this case as well.
\end{proof}

 To use the formula in Proposition \ref{prop-LiWuZheng} we need to compute the Lie bracket and Killing form in the given Weyl basis.  Now the Killing form $K$ is negative definite on $\mathfrak{h}$ and thus induces a positive definite bilinear form, denoted also by $K$, on the dual $\mathfrak{h}^*$.  We can then identify $\mathfrak{h}^*$ with $\R^l$ (or a subspace of some $\R^n$) so that $K$ becomes the standard Euclidean inner product and the root system is represented by a subset $\Delta\subset\R^l$.   It turns out that a corresponding Weyl basis $\{E_{\a}\}_{\a \in \Delta^+}$ exists in which the Lie bracket and Killing form are computed in terms of Euclidean inner products and addition of the vectors $\a$.  We describe this in more detail below.

Let $\mathfrak{g}$ be a semi simple Lie algebra, and let $\Delta=\{\a,\beta,...\}\subset \R^n$ be a corresponding root system with standard inner product $(\cdot, \cdot)$ corresponding to the induced Killing form $K$.   To the positive roots there corresponds a $Chevalley$ $basis$ $\{X_\a, X_{-\a}, H_\a\}_{\a\in \Delta^+}$ where for each $\a$, $X_\a, X_{-\a}$ are root vectors for $\a, -\a$ respectively, $H_\a \in \mathfrak{h}$,  and the following relations are satisfied (see \cite[ p.51]{Serre}):
\be\label{liebracketeuclid}
\begin{split}
[X_\a,X_{-\a}]=&H_\a,\\
[X_\a,X_\beta]= &\left\{
                  \begin{array}{ll}
                   N_{\a,\beta}X_{\a+\beta},   & \hbox{if $\a+\beta$ is a root, $N_{\a,\beta}=-N_{-\a,-\beta}$
;} \\
                    0, & \hbox{if $\a+\beta\neq0$ is not a root.}
                  \end{array}
                \right.\\
                N_{\a,\beta}=&\pm (p+1), \text{\ $p$ is the largest integer so that $\beta-p\a$ is a root}.\\
                [H_\a,X_{\beta}]=&\beta(H_\a)X_\beta.
                 \end{split}
                \ee
We also have:
 \begin{enumerate}
           \item [(a)] $\beta(H_\a)=2\frac{(\beta,\a)}{(\a,\a)}$,
           \text{\rm \cite[p.337]{Fulton-Harris}};
           \item [(b)] $K(H_\a,H_\a)=2K(X_\a,X_{-\a})$,  \text{\rm\cite[p.207]{Fulton-Harris}}.
         \end{enumerate}
By these and \cite[p.207-208]{Fulton-Harris}, we have the formulas

\be\label{killingeuclid}
\begin{split}
K(H_\a,H_{-\a})&=\frac4{(\a,\a)},\\
K(X_\a,X_{-\a})&=\frac2{(\a,\a)},\\
K(H_\a,H_\beta)&=\frac{4(\a,\beta)}{|\a|^2|\beta|^2}.
\end{split}
\ee
Where in the last equation we have used the first equation of \eqref{liebracketeuclid}, \eqref{Killing_jacobi},  (a),  and the second formula in \eqref{killingeuclid}.

\begin{lma}\label{l-Weyl basis}
For positive roots $\a$, let
\be
E_\a=\frac {|\a|}{\sqrt 2}X_\a, E_{-\a}=-\frac {|\a|}{\sqrt 2}X_{-\a}.
\ee
Then for positive roots $\a,\beta$
\be
\begin{split}
K(E_\a,E_{-\a})=&-1;\\
[E_\a,E_\beta]=&n_{\a,\beta}E_{\a+\beta};\\
[E_{-\a},E_{-\beta}]=& n_{-\a,-\beta}E_{-\a-\beta};\\
[E_\a,E_{-\beta}]=&n_{\a,-\beta}E_{\a-\beta}, \text{\rm if $\a-\beta\neq0$},
\end{split}
\ee
where

$$
n_{-\a,-\beta}=n_{\a,\beta}
= \left\{\begin{array}{ll}\frac {|\a||\beta|}{ \sqrt2|\a+\beta|}N_{\a,\beta}, & \hbox{if $\a+\beta$ is a root;} \\
0, & \hbox{if $\a+\beta$ is not a root;}
                                    \end{array}
                                  \right.
$$
and
$$
n_{\a,-\beta}= \left\{
                                    \begin{array}{ll}
                                      \frac {|\a||\beta|}{  \sqrt2|\a-\beta|}N_{\a,-\beta}, & \hbox{if $\a-\beta$ is a positive  root;}\\
  -\frac {|\a||\beta|}{  \sqrt2|\a-\beta|}N_{\a,-\beta}, & \hbox{if $\a-\beta$ is a negative  root;}\\
0, & \hbox{if $\a-\beta$ is not a root.}
                                    \end{array}
                                  \right.
$$
Hence $\{E_\a\}$ form a Weyl canonical basis.
\end{lma}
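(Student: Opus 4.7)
The proof is a direct computation: substitute the definitions $E_\a = \frac{|\a|}{\sqrt 2} X_\a$ and $E_{-\a}=-\frac{|\a|}{\sqrt 2}X_{-\a}$ into the Chevalley relations \eqref{liebracketeuclid} and the Killing form values \eqref{killingeuclid}, and solve for the scalar factors $n_{\a,\beta}$, $n_{-\a,-\beta}$, $n_{\a,-\beta}$ so that each bracket takes the required form $n_{\cdot,\cdot}E_{\cdot+\cdot}$. Nothing deep is involved; the content of the lemma is that after rescaling the Chevalley basis, the structure constants become the stated rational multiples of Euclidean data on the root system, but the signs have to be tracked carefully.

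For the normalization, bilinearity of $K$ together with the second formula of \eqref{killingeuclid} gives
$$K(E_\a,E_{-\a})=-\frac{|\a|^2}{2}K(X_\a,X_{-\a})=-\frac{|\a|^2}{2}\cdot\frac{2}{(\a,\a)}=-1,$$
using $|\a|^2=(\a,\a)$. For positive roots $\a,\beta$ with $\a+\beta$ a root, I would expand
$$[E_\a,E_\beta]=\frac{|\a||\beta|}{2}[X_\a,X_\beta]=\frac{|\a||\beta|}{2}N_{\a,\beta}X_{\a+\beta}$$
and rewrite $X_{\a+\beta}=\frac{\sqrt 2}{|\a+\beta|}E_{\a+\beta}$, reading off the stated value of $n_{\a,\beta}$; if $\a+\beta$ is not a root, the bracket is zero by \eqref{liebracketeuclid}, giving $n_{\a,\beta}=0$.

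The bracket $[E_{-\a},E_{-\beta}]$ acquires two minus signs from the definitions of $E_{-\a}$ and $E_{-\beta}$, a third from the relation $X_{-(\a+\beta)}=-\frac{\sqrt 2}{|\a+\beta|}E_{-(\a+\beta)}$, and a fourth from the Chevalley identity $N_{-\a,-\beta}=-N_{\a,\beta}$; these combine to give $n_{-\a,-\beta}=n_{\a,\beta}$, the key Weyl canonical symmetry. For the mixed bracket $[E_\a,E_{-\beta}]$ with $\a,\beta>0$ and $\a\neq\beta$, substitution yields $-\frac{|\a||\beta|}{2}N_{\a,-\beta}X_{\a-\beta}$, and I would then split on whether $\a-\beta$ is a positive root, a negative root, or not a root at all: in the first two cases the conversion between $X_{\a-\beta}$ and $E_{\a-\beta}$ carries opposite signs, because $E_{-\gamma}$ contains an extra minus relative to $E_{\gamma}$ for positive $\gamma$, producing precisely the piecewise formula for $n_{\a,-\beta}$.

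The only genuine obstacle is the sign accounting described above; once one keeps track simultaneously of (i) the minus sign built into the definition of $E_{-\a}$, (ii) the antisymmetry $N_{-\a,-\beta}=-N_{\a,\beta}$, and (iii) the sign picked up when converting a Chevalley vector $X_\gamma$ to a basis vector $E_\gamma$ according to the sign of $\gamma$, all of the defining relations of a Weyl canonical basis listed at the beginning of \S\ref{section-C-spaces} are verified and the lemma follows.
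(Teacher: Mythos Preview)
Your proposal is correct and follows essentially the same approach as the paper: both proceed by direct substitution of the definitions $E_{\pm\a}=\pm\frac{|\a|}{\sqrt 2}X_{\pm\a}$ into the Chevalley bracket relations and the Killing form formula $K(X_\a,X_{-\a})=2/(\a,\a)$, then convert $X_{\a\pm\beta}$ back to $E_{\a\pm\beta}$ and read off the structure constants. Your description of the four sign sources in the $[E_{-\a},E_{-\beta}]$ case and the positive/negative split for $\a-\beta$ in the mixed bracket is exactly the bookkeeping the paper carries out.
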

\begin{proof}
It is easy to see that $K(E_\a,E_{-\a})=-1$. If $\a+\beta$ is a root, then
\bee
\begin{split}
[E_\a,E_{\beta }]=&\frac {|\a||\beta|}{  2}[X_\a,X_\beta]\\=&N_{\a,\beta}\frac {|\a||\beta|}{  2} X_{\a+\beta}\\=&\frac {|\a||\beta|}{  \sqrt2|\a+\beta|}N_{\a,\beta}E_{\a+\beta}=n_{\a,\beta}E_{\a+\beta}
\end{split}
\eee
\bee
\begin{split}
[E_{-\a},E_{-\beta }]=&\frac {|\a||\beta|}{  2}[X_{-\a},X_{-\beta}]\\
=&N_{-\a,-\beta}\frac {|\a||\beta|}{  2} X_{-\a-\beta}\\
=&N_{\a,\beta}\frac {|\a||\beta|}{ \sqrt 2|\a+\beta|} E_{-\a-\beta}=n_{\a,\beta}E_{-\a-\beta}
\end{split}
\eee
where $n_{\a,\beta}=\frac {|\a||\beta|}{  \sqrt2|\a+\beta|}N_{\a,\beta}$. Here we have used the fact that $N_{\a,\beta}=-N_{-\a,-\beta}$ and
$X_{-\a-\beta}=-\frac{\sqrt 2}{|\a+\beta|}  E_{-\a-\beta}$. If $\a+\beta$ is not a root, then $[E_,a,E_\beta]=0$.

If $\a-\beta\neq 0$ and is a positive root, then

\bee
[E_\a,E_{-\beta }]=\frac {|\a||\beta|}{  2}[X_\a,X_{-\beta}]=N_{\a,-\beta}\frac {|\a||\beta|}{  2} X_{\a-\beta}=\frac {|\a||\beta|}{  \sqrt2|\a-\beta|}N_{\a,-\beta}E_{\a-\beta}.
\eee
 If $\a-\beta\neq 0$ and is a negative root, then
\bee
[E_\a,E_{-\beta }]=\frac {|\a||\beta|}{  2}[X_\a,X_{-\beta}]=N_{\a,-\beta}\frac {|\a||\beta|}{  2} X_{\a-\beta}=-\frac {|\a||\beta|}{  \sqrt2|\a-\beta|}N_{\a,-\beta}E_{\a-\beta}.
\eee
\end{proof}

Now let $\eta\in \Delta^+$, and consider the \K $C$-space $(\mathfrak{g},\eta)$ with corresponding  Weyl frame (unitary frame for $(\mathfrak{g},\eta)$)  $e_\a=\frac1{\sqrt k}E_\a$ for $\a\in \Delta^+(k)$.  For any positive roots $\a, \beta$, define
\be\label{wtN}
\left\{
  \begin{array}{ll}
    \wt N_{\a,\beta}= \frac{|\a||\beta|}{|\a+\beta|}N_{\a,\beta};& \\
\wt N_{\a,-\beta}= \frac{|\a||\beta|}{|\a-\beta|}N_{\a,-\beta}, &\hbox{\rm if $\a-\beta\neq0$}.
 \end{array}
\right.
\ee

We can now combine Lemma \ref{l-Weylframe} and Proposition \ref{prop-LiWuZheng} with Lemma \ref{l-Weyl basis}, \eqref{liebracketeuclid} and \eqref{killingeuclid} to obtain the following from \cite[Proposition 2.4]{Itoh}:
 \begin{lma}\label{l-curvature-formula-2} Let $\a\in \Delta^+(i)$, $\beta\in \Delta^+(j)$, with $i\le j$ and let $R_{\a\ba\beta\b\beta}=R(e_\a,\b e_\a,e_\beta,\b e_\beta)$. Then
\be
R_{\a\ba\beta\b\beta}=\frac1j\lf((\a,\beta)+\frac12\frac{i}{i+j}\wt N_{\a,\beta}^2\ri).
\ee
\end{lma}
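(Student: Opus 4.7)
The plan is to specialize Proposition \ref{prop-LiWuZheng} to $X=Y=E_\alpha \in \mathfrak{m}_i^+$ and $Z=W=E_\beta \in \mathfrak{m}_j^+$. With the indices $(i,j,k,l)$ of that proposition replaced by $(i,i,j,j)$, the hypothesis $i+k=j+l$ is automatic, and four-linearity of $R$ together with the rescaling $E_\alpha = \sqrt{i}\, e_\alpha$, $E_\beta = \sqrt{j}\, e_\beta$ reduces the problem to computing $R(E_\alpha,\bar E_\alpha,E_\beta,\bar E_\beta)$ and then dividing by $ij$.

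The first step is to simplify the two coefficients produced by Proposition \ref{prop-LiWuZheng}. Under the hypothesis $i\le j$ one has $\xi_{i-j}=0$, and the coefficient of the first Killing form term becomes $(j-i)\xi_{j-i}-ij/(i+j)$, which a short algebraic check shows equals $-i^2/(i+j)$ in both cases $i<j$ and $i=j$; similarly the second coefficient collapses to $i$ in both cases (with the $\delta_{ij}\delta_{kl}$ term contributing when $i=j$).

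The second step is to evaluate the two Killing-form brackets. For $K([E_\alpha,E_\beta],[\bar E_\alpha,\bar E_\beta])$, Lemma \ref{l-Weyl basis} gives $[E_\alpha,E_\beta]=n_{\alpha,\beta}E_{\alpha+\beta}$, and since $\bar E_\alpha=-E_{-\alpha}$ with $n_{-\alpha,-\beta}=n_{\alpha,\beta}$, the conjugate bracket is $[\bar E_\alpha,\bar E_\beta]=n_{\alpha,\beta}E_{-\alpha-\beta}$; pairing via $K(E_\gamma,E_{-\gamma})=-1$ and comparing the formulas for $n_{\alpha,\beta}$ and the definition of $\widetilde N_{\alpha,\beta}$ in \eqref{wtN} yields $K([E_\alpha,E_\beta],[\bar E_\alpha,\bar E_\beta])=-n_{\alpha,\beta}^2=-\tfrac12\widetilde N_{\alpha,\beta}^2$. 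For $K([E_\alpha,\bar E_\alpha],[E_\beta,\bar E_\beta])$, the Chevalley relation $[X_\alpha,X_{-\alpha}]=H_\alpha$ together with $E_\alpha=(|\alpha|/\sqrt 2)X_\alpha$ gives $[E_\alpha,\bar E_\alpha]=\tfrac{|\alpha|^2}{2}H_\alpha$ and likewise for $\beta$, whereupon the identity $K(H_\alpha,H_\beta)=4(\alpha,\beta)/(|\alpha|^2|\beta|^2)$ from \eqref{killingeuclid} collapses the pairing to exactly $(\alpha,\beta)$.

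Assembling the pieces gives $R(E_\alpha,\bar E_\alpha,E_\beta,\bar E_\beta)=\tfrac{i^2}{2(i+j)}\widetilde N_{\alpha,\beta}^2+i(\alpha,\beta)$, and division by $ij$ yields the stated identity. The only real obstacle is bookkeeping: correctly relabeling indices between Proposition \ref{prop-LiWuZheng} and the present lemma, and verifying that the two coefficient simplifications work uniformly in the cases $i<j$ and $i=j$. Beyond that the proof is a mechanical assembly of Proposition \ref{prop-LiWuZheng}, Lemma \ref{l-Weyl basis}, and the Chevalley-basis formulas \eqref{liebracketeuclid}--\eqref{killingeuclid}.
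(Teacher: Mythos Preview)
Your proof is correct and follows exactly the approach the paper indicates (it merely cites Proposition~\ref{prop-LiWuZheng}, Lemma~\ref{l-Weyl basis}, and \eqref{liebracketeuclid}--\eqref{killingeuclid} without spelling out details). One small slip: after substituting $(i,j,k,l)\to(i,i,j,j)$ the first coefficient is $(j-i)\xi_{j-i}-j^2/(i+j)$, not $-ij/(i+j)$; your stated simplification to $-i^2/(i+j)$ is nonetheless the correct value, so the argument stands.
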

Next let us consider $R(\a,\b\beta,\gamma,\b\delta)=R(e_\a,\b e_\beta,e_\gamma,\b e_\delta)$ with $\a-\beta,\gamma-\delta\neq0$.

\begin{lma}\label{l-curvature-formula-3}
Let $e_\a \in \Delta^+(i)$, $e_\beta \in \Delta^+(j)$, $e_\gamma \in \Delta^+(k)$ and $e_\delta\in \Delta^+(l)$.
\begin{enumerate}
  \item If $\a-\beta\neq \delta-\gamma$, then $R(\a,\b\beta,\gamma,\b\delta)=0.$
  \item If $\a-\beta= \delta-\gamma\neq 0$, then
\be
\begin{split}
R(\a,\b\beta,\gamma,\b\delta)=&
 -\frac{1}{2\sqrt{ijkl}}\lf[ \lf((k-j)\xi_{k-j}-\frac{kl}{i+k}\ri)\wt N_{\a,\gamma}\wt N_{ \beta, \delta}\ri] \\
 &+   \frac{1}{2\sqrt{ijkl}}\lf[\lf(- (k-j)\xi_{k-j}+k\xi_{i-j}+l\xi_{j-i}+l\delta_{ij}\delta_{kl}\ri)\wt N_{\a,-\beta}\wt N_{\gamma,-\delta}.                                \ri]\\
=:&R_1(\a,\b\beta,\gamma,\b\delta)+R_2(\a,\b\beta,\gamma,\b\delta).\end{split}
\ee
\end{enumerate}
\end{lma}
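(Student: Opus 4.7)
The plan is to derive the lemma by unfolding Proposition \ref{prop-LiWuZheng} in the Weyl frame $e_\mu = E_\mu/\sqrt{k}$ ($\mu \in \Delta^+(k)$), using the explicit bracket and Killing-form values from Lemma \ref{l-Weyl basis} together with $K(E_\mu, E_{-\mu})=-1$ and the orthogonality of distinct root spaces under $K$. First I would reduce to the case $i+k=j+l$: Proposition \ref{prop-LiWuZheng} already makes $R(\a,\b\beta,\g,\b\delta)$ vanish otherwise, and when it holds, the proposition expresses $R$ as a linear combination of $K([e_\a,e_\g],[\b e_\beta,\b e_\delta])$ and $K([e_\a,\b e_\beta],[e_\g,\b e_\delta])$ with explicit coefficients in $i,j,k,l$.

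For part (1), I would argue that each of these two Killing-form terms vanishes unless $\a-\beta=\delta-\g$. By Lemma \ref{l-Weyl basis}, $[e_\a,e_\g]$ is a scalar multiple of $E_{\a+\g}$ while $[\b e_\beta,\b e_\delta]$ is a scalar multiple of $E_{-\beta-\delta}$, so by orthogonality of distinct root spaces under $K$ their pairing vanishes unless $\a+\g=\beta+\delta$. Similarly $[e_\a,\b e_\beta]$ lies in the root space of $\a-\beta$ and $[e_\g,\b e_\delta]$ in that of $\g-\delta$ (both nonzero by hypothesis), so the second pairing vanishes unless $\a-\beta=-(\g-\delta)=\delta-\g$. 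Both conditions reduce to $\a-\beta=\delta-\g$, giving (1).

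For part (2), assuming $\a-\beta=\delta-\g\neq 0$, I would compute each Killing-form term explicitly. Using the normalizations $e_\mu = E_\mu/\sqrt{k}$ and $n_{\a,\g}=\wt N_{\a,\g}/\sqrt{2}$ from Lemma \ref{l-Weyl basis}, one finds
\[
K([e_\a,e_\g],[\b e_\beta,\b e_\delta])= -\frac{\wt N_{\a,\g}\wt N_{\beta,\delta}}{2\sqrt{ijkl}},
\qquad
K([e_\a,\b e_\beta],[e_\g,\b e_\delta])= \frac{\wt N_{\a,-\beta}\wt N_{\g,-\delta}}{2\sqrt{ijkl}}.
\]
Substituting these into the Li--Wu--Zheng formula produces the stated decomposition $R=R_1+R_2$.

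The main obstacle is sign tracking in the second Killing term. The coefficient $n_{\mu,-\nu}$ in Lemma \ref{l-Weyl basis} carries opposite signs depending on whether $\mu-\nu$ is a positive or a negative root, and since exactly one of $\a-\beta$ and $\g-\delta=-(\a-\beta)$ is positive as a root, these conventions differ by a factor of $-1$. However, this is exactly offset by $K(E_{\a-\beta},E_{\beta-\a})=-1$ holding regardless of whether $\a-\beta$ is itself a positive root; verifying both sub-cases ($\a-\beta$ positive versus negative) separately shows they yield the same expression, after which (2) follows by direct substitution.
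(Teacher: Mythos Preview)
Your approach is essentially the same as the paper's: both unfold Proposition \ref{prop-LiWuZheng} in the Weyl frame and evaluate the two Killing-form terms using Lemma \ref{l-Weyl basis} together with root-space orthogonality under $K$. Your sign analysis for part (2) is correct and matches the paper's computation; the paper likewise reduces (without loss of generality) to the case that $\a-\beta$ is a positive root and checks that the opposite sign conventions for $n_{\a,-\beta}$ and $n_{\g,-\delta}$ cancel against $K(E_{\a-\beta},E_{\beta-\a})=-1$.

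One small gap: in part (1) you assert that $\a-\beta$ and $\g-\delta$ are ``both nonzero by hypothesis,'' but part (1) carries no such hypothesis. If, say, $\a=\beta$ while $\g\neq\delta$, then $[e_\a,\b e_\beta]\in\mathfrak{h}$ rather than a root space, and one needs the additional orthogonality $K(\mathfrak{h},\mathfrak{g}_\sigma)=0$ for $\sigma\neq 0$ to conclude the second term vanishes. The paper handles this degenerate case by invoking Lemma \ref{l-Weylframe}; your argument works equally well once you add this remark.
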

\begin{proof}
(1) follows from Lemma \ref{l-Weylframe}, and the fact that $K(E_\a,E_\beta)=0$ unless $\a+\beta=0$.

(2) Note that $\b e_\a=-e_{-\a}$, etc. First assume that $\a+\gamma$ and $\a-\beta$ are both roots. By Lemma \ref{l-Weyl basis}
\bee
\begin{split}
[e_\a,e_\gamma]=&\frac1{\sqrt{ik}}[E_\a,E_\gamma]\\
=&n_{\a,\gamma}\frac1{\sqrt{ik}}E_{\a+\gamma}\\
=&N_{\a,\gamma}\frac{|\a||\gamma|}{\sqrt{2ik}}E_{\a+\gamma}.
\end{split}
\eee
Similarly,
\bee
\begin{split}
[e_{-\beta} ,e_{-\delta} ]
=&N_{ \beta, \delta}\frac{|\gamma||\delta|}{\sqrt{2jl}}E_{-\beta-\delta},
\end{split}
\eee

We may assume that $\a-\beta$ is a positive root, then $\gamma-\delta$ is a negative root.
\bee
\begin{split}
[e_{\a} ,e_{-\beta} ]=&N_{ \a,-\beta}\frac{|\a||\beta|}{\sqrt{2ij}}E_{\a-\beta},
\end{split}
\eee
\bee
\begin{split}
[e_{\gamma} ,e_{-\delta} ]=&-N_{ \gamma,-\delta}\frac{|\gamma||\delta|}{\sqrt{2kl}}E_{\gamma-\delta}.
\end{split}
\eee
Since $\a+\gamma=\beta+\delta$ and $K(E_\sigma,E_{-\sigma})=-1$, we see that (2) is true by Proposition \ref{prop-LiWuZheng}. The cases that $\a+\gamma$ or $\a-\beta$ is not a root can be proved similarly.

\end{proof}

\subsection{The condition $QB \geq 0$}\label{section-quadraticbisectional}
We  first discuss the condition $QB \geq 0$ on a \K manifold $(M,\omega)$ with \K form $\omega$.  We will also consider the condition $QB > 0$ at $p$ which we define as:   $QB \geq 0$ at $p$ with strict inequality in \eqref{QBgeq0} provided not all $\xi_i 's$ are the same.  Now define the following bilinear forms on the space $\Omega^{1,1}_{\R} (M)$ of real $(1,1)$ forms on $M$:
$$
F(\eta,\sigma)=\sum_{i,j,k,l}R_{i\b jk\b l}\rho^{i\b l} \sigma^{k\b j} =\sum_{i,j,k,l}R_{i\b l k\b j}\rho^{i\b l} \sigma^{k\b j} $$

$$
G(\eta,\sigma)=\frac12(R_{i\b j}g_{k\b l}+R_{k\b l}g_{i\b j} )\rho^{i\b l} \sigma^{k\b j}.$$
where $\rho^{i\b l},  \sigma^{k\b j}$ are the local components of $\rho, \sigma$ with indices raised.  Clearly, $G$ and $F$ are well defined real symmetric  bilinear forms on $\Omega^{1,1}_{\R} (p)$ for any $p$.  Now let $\theta_A$ be a unitary frame at any $p$ with co-frame $\eta_A$ and let $a_A$ be real numbers.  Take $X=\sum_{A}\ii a_A \eta_A\wedge \overline{\eta_A} \Omega^{1,1}_{\R} (p)$.  Then a simple calculation gives
\begin{equation}\label{bochnerQB}
 G(X,X)-F(X,X)=  \sum_A R_{A\b A}a_A^2-\sum_{A,B}R_{A\b AB\b B}a_Aa_B = \frac12\sum_{A,B}R_{A\b AB\b B}(a_A-a_B)^2.
 \end{equation}

The following was observed by Yau \cite{Zheng}.

\begin{lma}\label{l-QB-quadractic-1}
At any point $p$ we have
\begin{enumerate}
\item [(a)] $QB\ge0$ if and only if  $G-F\ge 0$.
\item [(b)] $QB>0$ if and only if  $G-F> 0$ on $\Omega^{1,1}_{\R} (p) \setminus \R\omega(p)$.
\end{enumerate}
Here $\Omega^{1,1}_{\R} (p) \setminus \R\omega(p)$ is the real $(1,1)$ forms at $p$ which are not a multiple of the \K form.
\end{lma}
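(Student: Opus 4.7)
The plan is to deduce both parts of the lemma directly from the identity \eqref{bochnerQB}, using the fact that every real $(1,1)$ form can be unitarily diagonalized with respect to the metric. The identity \eqref{bochnerQB} already tells us that for any unitary frame $\theta_A$ at $p$ and real numbers $a_A$, if we set $X=\sum_A \ii a_A\,\eta_A\wedge\overline{\eta_A}$, then
$$
G(X,X)-F(X,X)=\tfrac12\sum_{A,B}R_{A\bar A B\bar B}(a_A-a_B)^2,
$$
so half of each equivalence is immediate: if $G-F\ge 0$ on all of $\Omega^{1,1}_\R(p)$ then specializing to such diagonal $X$'s in an arbitrary unitary frame and arbitrary real $a_A$ gives $QB\ge 0$ at $p$, and similarly for the strict version in (b).

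For the reverse implications I would use the standard fact that a real $(1,1)$ form $X$ at $p$ corresponds to a Hermitian form on $T^{(1,0)}_pM$, which can be simultaneously diagonalized with the metric. Concretely, given any $X\in\Omega^{1,1}_\R(p)$, I choose a unitary frame $\theta_A$ at $p$ in which $X=\sum_A \ii a_A\,\eta_A\wedge\overline{\eta_A}$ with $a_A\in\R$. Then \eqref{bochnerQB} rewrites $(G-F)(X,X)$ as the QB quadratic form in this frame, and if $QB\ge 0$ at $p$ this sum is nonnegative, proving (a).

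For (b), I would just observe that in any unitary frame one has $\omega(p)=\sum_A\ii\,\eta_A\wedge\overline{\eta_A}$, so after the diagonalization above, $X\in\R\omega(p)$ if and only if all $a_A$ agree. Assuming $QB>0$ at $p$ and $X\notin\R\omega(p)$, the $a_A$ are not all equal, so by definition of $QB>0$ the right-hand side of \eqref{bochnerQB} is strictly positive, giving $(G-F)(X,X)>0$. Conversely, if $G-F>0$ on $\Omega^{1,1}_\R(p)\setminus\R\omega(p)$, then for any unitary frame and any real tuple $(a_A)$ not all equal, the form $X=\sum_A\ii a_A\,\eta_A\wedge\overline{\eta_A}$ lies outside $\R\omega(p)$, so \eqref{bochnerQB} forces the QB sum to be strictly positive.

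There is essentially no analytic obstacle; the only nontrivial input is the Hermitian-diagonalization step, and even that is routine linear algebra on $T^{(1,0)}_pM$. The main point to state carefully is that the bijection $X\leftrightarrow(\text{Hermitian form})$ is equivariant under change of unitary frame, so the freedom to choose the frame adapted to $X$ in the \textquotedblleft only if\textquotedblright\ directions is exactly what converts a statement about diagonal $X$'s into a statement about all of $\Omega^{1,1}_\R(p)$.
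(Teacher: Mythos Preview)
Your proof is correct and follows essentially the same approach as the paper: both directions are deduced from the identity \eqref{bochnerQB}, using in one direction that the frame and the $a_A$ are arbitrary, and in the other that any real $(1,1)$ form can be unitarily diagonalized (with the observation for (b) that $X\in\R\omega(p)$ iff all eigenvalues $a_A$ coincide). There is no substantive difference between your argument and the paper's.
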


\begin{proof}  We first prove (a).  The fact that  $G-F\ge0$ implies $QB\ge0$ follows immediately from \eqref{bochnerQB} and the fact that $\theta_A$ and $a_A$ are arbitrary.   Conversely, suppose $QB\geq 0$ and let $X$ be any real $(1,1)$ form at $p$.  Then we can always diagonalize $X$.  Namely, there exists a unitary frame $e_A$ with co-frame $\eta_A$ such that $X= \sum_{A}\ii a_A \eta_A\wedge \overline{\eta_A}$.  Now \eqref{bochnerQB} and the assumption $QB \geq 0$ immediately implies $G(X,X)-F(X,X) \geq 0$.

Now we prove (b).  The proof is basically the same in part (a) once we observe that $X\in \R\omega(p)$  if and only if: for every  unitary frame $e_A$ at $p$ with co-frame $\eta_A$ we have $X=c\sum_{A}\ii \eta_A\wedge \overline{\eta_A}$ for some real constant $c$.  The fact that  $G-F>0$ on $\Omega^{1,1}_{\R} (p) \setminus \R\omega(p)$ implies $QB>0$ now follows immediately from \eqref{bochnerQB} and the fact that $\theta_A$ and $a_A$ are arbitrary.   Conversely, suppose $QB> 0$ and let $X\in \Omega^{1,1}_{\R} (p) \setminus \R\omega(p)$.  Then there exists a unitary frame $e_A$ with co-frame $\eta_A$ such that $X= \sum_{A}\ii a_A \eta_A\wedge \overline{\eta_A}$ with $a_A 's$ not all the same.  Now \eqref{bochnerQB} and the assumption $QB >0$ immediately implies $G(X,X)-F(X,X) > 0$.

 This concludes the proof of the Lemma.
 \end{proof}

\begin{rem}\label{QBperturb}
 Thus $QB>0$ if and only if $G-F$ is positive in the orthogonal complement of $\R\omega$. In particular, if $(M,g)$ is a compact \K manifold with $QB>0$ then a \K metric which is a small perturbation of $g$ will also satisfy $QB>0$.
\end{rem}

 \begin{rem} Viewed as an endomorphism on $\Omega^{1,1}_{\R} (M)$, $G-F$ is in fact the curvature term in the Weitzenb\"{o}ck identity for real $(1,1)$ forms: $\Delta_g- \Delta$ is given by $G-F$ up to a positive constant multiple where $\Delta_g$ is the Bochner Laplacian with respect to $g$ and $\Delta$ is the Laplace-Beltrami operator.  The standard Bochner technique and Lemma \ref{l-QB-quadractic-1}
 then gives: {\sl all real harmonic (1,1) forms on $M$ are parallel provided $QB\geq 0$}, moreover, {\sl $dim(H^{1,1}_{\R} (M))=1$ provided $QB>0$} where $H^{1,1}_{\R} (M)$ is the space of real harmonic (1,1) forms on $M$.  See \S 1 for a reference to these facts and their implicit appearance in earlier works.
\end{rem}

By Lemma \ref{l-QB-quadractic-1} to check whether $QB\ge0$ (or $QB>0$) it is sufficient to consider $G-F \geq 0$ in a unitary frame of our choice. In the case of \K $C$-spaces, the natural choice is a Weyl frame. By Lemmas \ref{l-Weylframe} and \ref{l-QB-quadractic-1}, we have:

\begin{cor}\label{c-QB}
 On a \K $C$-space, let $\Ric=\mu g $ and let $e_A$ be a Weyl frame.  Then $QB\ge 0$ if and only if the largest eigenvalues of the quadratic forms $\sum_{A,B}R_{A\b AB\b B}x_Ax_B$, with $x_A$'s real, and $\sum_{{A,B,C,D;}\atop{  A\neq B, C\neq D}}  R_{A\b B C\b D}x_{AB}x_{CD}$, with $\ol{x_{AB}}=x_{BA}$, are at most $\mu$. $QB>0$   if $QB\ge0$ and the eigenvalue $\mu$ of  $\sum_{A,B}R_{A\b AB\b B}x_Ax_B$ is simple and the largest eigenvalue of $\sum_{{A,B,C,D;}\atop{  A\neq B, C\neq D}}  R_{A\b B C\b D}x_{AB}x_{CD}$ is less than $\mu$.
\end{cor}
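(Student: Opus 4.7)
The plan is to combine Lemma \ref{l-QB-quadractic-1} with the K\"ahler--Einstein condition on a \K $C$-space and with the vanishing identity in a Weyl frame from Lemma \ref{l-Weylframe}, which together decouple the curvature quadratic form into two orthogonal blocks. Since $\Ric = \mu g$, in any unitary frame one computes
\[
G(X,X) = \mu \sum_{A,B} X^{A\bar B}\,\overline{X^{A\bar B}} = \mu\,|X|^2
\]
for every real $(1,1)$ form $X$. Thus by Lemma \ref{l-QB-quadractic-1}, $QB \ge 0$ is equivalent to $F(X,X) \le \mu |X|^2$ for all real $(1,1)$ forms $X$, and $QB > 0$ is equivalent to strict inequality for all $X \notin \R\omega$.

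Next, fix a Weyl frame $\{e_A\}$ and decompose the Hermitian coefficient matrix $(X^{A\bar B})$ into its diagonal part $x_A := X^{A\bar A} \in \R$ and its off-diagonal part $x_{AB} := X^{A\bar B}$ ($A \ne B$), which satisfies $\overline{x_{AB}} = x_{BA}$. Expanding $F(X,X) = \sum R_{i\bar j k\bar l} X^{i\bar l} X^{k\bar j}$ and classifying each factor as diagonal ($i=l$) or off-diagonal ($i \ne l$), the two types of mixed terms reduce, after the K\"ahler symmetries of the curvature tensor, to components of the form $R(e_A, \bar e_A, e_C, \bar e_D)$ with $C \ne D$, each of which vanishes by Lemma \ref{l-Weylframe}. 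A further application of the K\"ahler symmetry $R_{A\bar D C\bar B} = R_{A\bar B C\bar D}$ brings the surviving off-diagonal block into the stated form, yielding
\[
F(X,X) \;=\; \sum_{A,B} R_{A\bar A B\bar B}\, x_A x_B \;+\; \sum_{\substack{A\ne B\\ C\ne D}} R_{A\bar B C\bar D}\, x_{AB} x_{CD}.
\]
Correspondingly, $|X|^2 = \sum_A x_A^2 + \sum_{A\ne B} |x_{AB}|^2$ splits orthogonally, so $F \le \mu |\cdot|^2$ holds on $\Omega^{1,1}_{\R}$ if and only if each of the two blocks has largest eigenvalue at most $\mu$, which is the first statement of the corollary.

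For the $QB > 0$ statement, observe that in the block decomposition $\R\omega$ is precisely the line $\{x_A \equiv \text{const},\, x_{AB} \equiv 0\}$, and that the diagonal vector $(1,\dots,1)$ is always an eigenvector of the diagonal block with eigenvalue $\mu$, since $\sum_B R_{A\bar A B\bar B} = \Ric_{A\bar A} = \mu$ by the Einstein condition. Strict positivity of $G - F$ off $\R\omega$ is therefore equivalent to the two conditions: the eigenvalue $\mu$ of the diagonal block is simple, and the largest eigenvalue of the off-diagonal block is strictly less than $\mu$.

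The one step where care is needed is the vanishing of the mixed cross-terms in $F(X,X)$, which relies on Lemma \ref{l-Weylframe} together with the correct K\"ahler symmetry to identify the surviving off-diagonal sum in the form written in the corollary; once this block decomposition is in place, the conclusion follows from the spectral theorem applied separately to each block.
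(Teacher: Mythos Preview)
Your proof is correct and follows precisely the route the paper intends: the paper simply cites Lemmas~\ref{l-Weylframe} and~\ref{l-QB-quadractic-1} without further elaboration, and you have filled in exactly those details---the K\"ahler--Einstein reduction $G=\mu|\cdot|^2$, the block decomposition of $F$ via the vanishing of mixed terms $R(e_A,\bar e_A,e_C,\bar e_D)$ in the Weyl frame, and the identification of $\R\omega$ with the constant diagonal direction. Your observation that $(1,\dots,1)$ is always a $\mu$-eigenvector of the diagonal block (by the Einstein row-sum condition) in fact yields the \emph{converse} for $QB>0$ as well, slightly strengthening the corollary's ``if'' statement.
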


 The following simple fact will be used throughout the paper to estimate the largest eigenvalue of a quadratic form.

\begin{lma}\label{l-eigenvalue}[row sums]  Let $ x_1,\dots, x_n$, $a_1,\dots,a_n$, and $\lambda$ be real or complex numbers. Suppose $|x_k|=\max\{|x_i|\ 1\le i\le n\}>0$ and
$$
\lambda x_k=\sum_{i=1}^n a_ix_i.
$$
Then
$$
|\lambda|\le \sum_{j=1}^n|a_i|.
$$
In particular, if   $\lambda$ is an eigenvalue of an $n\times n$ matrix $(a_{ij})$, then
$$
|\lambda|\le \max_i\lf(\sum_{j=1}^n|a_{ij}|\ri).
$$
\end{lma}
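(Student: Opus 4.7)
The plan is to apply the triangle inequality directly. Starting from the hypothesis $\lambda x_k = \sum_{i=1}^n a_i x_i$, I take absolute values on both sides to obtain
\[
|\lambda|\,|x_k| = \left|\sum_{i=1}^n a_i x_i\right| \le \sum_{i=1}^n |a_i|\,|x_i|.
\]
Since $|x_k|$ is the maximum of the $|x_i|$, each term on the right is at most $|a_i|\,|x_k|$, so dividing through by $|x_k|>0$ yields $|\lambda|\le \sum_{i=1}^n |a_i|$.

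For the second statement, suppose $\lambda$ is an eigenvalue of $(a_{ij})$ with eigenvector $(x_1,\dots,x_n)\neq 0$. Choose an index $k$ with $|x_k|=\max_i|x_i|>0$. The $k$-th component of the eigenvalue equation reads $\lambda x_k=\sum_{j=1}^n a_{kj}x_j$, which is exactly the hypothesis of the first part with $a_j=a_{kj}$. Hence $|\lambda|\le \sum_{j=1}^n|a_{kj}|\le \max_i\bigl(\sum_{j=1}^n|a_{ij}|\bigr)$, which is the desired bound.

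There is no serious obstacle here: the argument is a one-line consequence of the triangle inequality combined with the choice of $k$ to maximize $|x_k|$. This is essentially the standard proof that the spectral radius of a matrix is bounded by its maximum absolute row sum (i.e., by the $\ell^\infty$ operator norm), and is also the core computation behind Gershgorin's circle theorem.
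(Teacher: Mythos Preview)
Your proof is correct and is exactly the standard triangle-inequality argument one would expect; the paper itself does not supply a proof of this lemma, treating it as a simple well-known fact.
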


 We also note the following modification of Lemma \ref{l-eigenvalue} which will only be needed in a few exceptional cases.

\begin{lma}\label{weightedrowsumsZ}[weighted row sums] Let   $\lambda$ is an eigenvalue of an $n\times n$ matrix $A=(a_{ij})$ such that $|\lambda|>0$. Let $\mu>0$ be a positive number. Define $b^{s}_j$ inductively: $b^{(0)}_j=1$, and
$$
b^{(s+1)}_j=\min(1,\sum_{l}|a_{jl}|\frac{b^{(s)}_l} \mu).
$$

Then for all $s\ge0$,
\be\label{2}
|\lambda|\le \max\{\max_i\lf(\sum_{j=1}^n|a_{ij}|b_j^{(s)}\ri), \mu\}.
\ee
In particular, if for some $s\ge0$,
\be\label{3}
\max_i\lf(\sum_{j=1}^n|a_{ij}|b_j^{(s)}\ri)<\mu,
\ee
then $\lambda<\mu$.
\end{lma}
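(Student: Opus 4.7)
The plan is to proceed by induction on $s$, with the base case $s=0$ being exactly Lemma~\ref{l-eigenvalue} (since $b_j^{(0)}=1$ reduces \eqref{2} to the standard row-sum bound). The main idea for the inductive step is a bootstrapping scheme: if $|\lambda|\ge \mu$, the normalized components of an eigenvector satisfy a contraction inequality that lets each weight $b_j^{(s)}$ be refined to $b_j^{(s+1)}$.

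Concretely, let $x=(x_1,\dots,x_n)$ be an eigenvector for $\lambda$, let $M=\max_i|x_i|>0$, and set $y_i=|x_i|/M\in[0,1]$. If $|\lambda|\le \mu$ the bound \eqref{2} is automatic, so I may assume $|\lambda|\ge \mu$. Dividing the eigenvalue equation $\lambda x_i=\sum_j a_{ij}x_j$ by $|\lambda|$ and taking absolute values gives the contraction
\[
y_i\le \frac{1}{|\lambda|}\sum_j|a_{ij}|\,y_j \le \frac{1}{\mu}\sum_j|a_{ij}|\,y_j.
\]
I would then prove by a second induction on $s$ that $y_i\le b_i^{(s)}$ for every $i$. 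The case $s=0$ holds since $y_i\le 1=b_i^{(0)}$; assuming the bound at level $s$, the contraction inequality together with $y_i\le 1$ yields
\[
y_i\le \min\!\Big(1,\ \frac{1}{\mu}\sum_j|a_{ij}|\,b_j^{(s)}\Big)=b_i^{(s+1)}.
\]
The role of the minimum in the recursion is exactly to preserve the uniform bound $y_j\le 1$ through successive iterations, which is the crucial feature that lets the weights tighten monotonically rather than blow up.

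To finish, pick an index $k$ with $|x_k|=M$ and substitute back into the eigenvalue equation:
\[
|\lambda|=\Big|\sum_j a_{kj}\,\frac{x_j}{x_k}\Big|\le \sum_j|a_{kj}|\,y_j\le \max_i\sum_j|a_{ij}|\,b_j^{(s)},
\]
which combined with the trivial case $|\lambda|\le \mu$ establishes \eqref{2}. For the strict conclusion, assume \eqref{3} holds; if one had $|\lambda|\ge \mu$ the same chain of inequalities would force $|\lambda|<\mu$, a contradiction, so necessarily $|\lambda|<\mu$. There is no substantive obstacle in the argument; the only point requiring care is the dichotomy between $|\lambda|\le \mu$ and $|\lambda|\ge \mu$, which determines whether the contraction inequality is available and hence whether the refined weights $b_j^{(s)}$ come into play.
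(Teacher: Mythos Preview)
Your proof is correct and follows essentially the same approach as the paper: normalize an eigenvector, split into the cases $|\lambda|\le\mu$ and $|\lambda|>\mu$, prove by induction on $s$ that the normalized components satisfy $y_i\le b_i^{(s)}$ via the contraction inequality, and then evaluate the eigenvalue equation at a maximal-component index. The only cosmetic differences are that the paper first records the monotonicity $b_j^{(s+1)}\le b_j^{(s)}$ (which is not actually used) and deduces the strict inequality \eqref{3} $\Rightarrow |\lambda|<\mu$ by a small perturbation $\mu\mapsto\mu-\epsilon$, whereas your direct contradiction argument is slightly cleaner.
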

\begin{proof}
First we show that $b^{(s+1)}_j\le b^{(s)}_j$ for all $j$. Note that by definition $1\ge b^{(s)}_j\ge0$. It is obviously true that  $b^{(1)}_j\le 1= b^{(0)}_j$. Suppose $b^{(s+1)}_j\le b^{(s)}_j$ for all $j$, then
$$
b^{(s+2)}_j=\min(1,\sum_{l}|a_{jl}|b^{(s+1)}_l/\mu)\le \min(1,\sum_{l}|a_{jl}|b^{(s)}_l/\mu) =b^{(s+1)}_j.
$$

To prove the lemma. If $\lambda\le \mu$, then the lemma is true. Suppose $\lambda>\mu$.
Let $x_i$ be the components of an eigenvector of $A$ with eigenvalue $\lambda$.  Suppose, without loss of generality, that $\max_i|x_i|=1$. We claim that for all $s\ge0$,
$$
|x_i|\le b^{(s)}_i
$$
for all $i\ge1$. For $s=1$, then for any $j$
\bee
\lambda x_j=\sum_{l}a_{jl}x_l.
\eee
and
\bee
|x_j|\le \frac{1}{|\lambda|}\sum_{l}|a_{jl}x_l|\le \frac{1}{|\lambda|}\sum_{l}|a_{jl}|.
\eee
So $|x_j|\le b^{(1)}_j$ because $\lambda>\mu$ and $|x_j|\le 1$. Now suppose $|x_j|\le b^{(s)}_j$ for all $j$. Then as before,

 \bee
|\lambda| |x_j|\le \sum_{l}|a_{jl}| |x_l|\le \sum_{l}|a_{jl}| b^{(s)}_l,
\eee
and
\bee
|x_j|\le \sum_{l}|a_{jl}|\frac{ b^{(s)}_l}\mu.
\eee
Hence $|x_j|\le b^{(s+1)}_j$. Hence the claim is true.

 Now we may assume without loss of generality that $|x_1|=1$. \eqref{2} is true for $s=0$ by the previous lemma. For $s\ge1$,
 \bee
 |\lambda|=|\lambda x_1|\le \sum_{l}|a_{1l}|x_l|\le \sum_{l}|a_{1l}b^{(s)}_l.
 \eee
 Hence \eqref{2} is also true in this case.

 and $|x_i|\leq 1$ for all $i$.  Then we have

\be\label{1}
|\lambda|=|\lambda  x_1|\le  \sum_{j}|a_{1j}| |x_j|.
\ee

If \eqref{3} is true, then it is still true if $\mu$ is replaced by $\mu-\e$ for $\e>0$ small enough. Then by \eqref{2}, $|\lambda|\le \mu-\e<\mu$.

\end{proof}

\section{\K $C$-spaces of classical type}\label{section-classical}

According to \cite{Itoh}, the \K $C$-spaces with $b_2=1$ of classical type which are not Hermitian symmetric spaces are $(B_n,\a_p)$, with $n\ge 3$, $1<p<n$, $(C_n,\a_p)$, with $n\ge 3$, $1<p<n$, and $(D_n,\a_p)$, with $n\ge 4$, $1<p<n-1$.  For each Lie algebra $B_n, C_n, D_n$ below, we assume an identification has been made between $\mathfrak{h}^*$, the dual Cartan subalgebra, and $V=\R^n$ so that the induced Killing form corresponds to the Euclidean inner product $(\cdot, \cdot)$.  We will then present the corresponding root system $\Delta$ as a set of vectors in $V=\R^n$.  We refer to \cite{Bourbaki} for details.

\subsection{The spaces $(B_n,\a_p)$}\label{section-B}

We first consider the space $(B_n,\a_p)$, with $n\ge 3$, $1<p<n$. Let $V=\R^n$ and let $\ve_i$ be the standard basis on $V$.  The root system for $B_n$ is:

\be\label{e-root-so2n-1}
\Delta=\{\pm \ve_i\pm \ve_j| 1\le i,j\le n,i\neq j  \}\cup\{\pm \ve_i|\ 1\le i\le n\}
\ee
Simple positive roots are:

\be
\label{e-root-B-2}
\a_1=\ve_1-\ve_2, \a_2=\ve_2-\ve_3,\dots, \a_{n-1}=\ve_{n-1}-\ve_n, \a_n=  \ve_n.
\ee
Positive roots are:
\be\label{e-root-B-1}
\Delta^+=\{ \ve_i+ \ve_j\}_{i<j}\cup\{\ve_i-\ve_j\}_{i<j}\cup\{\ve_i\}.
\ee
In terms of the $\a_i$'s the positive roots are
\be\label{e-B}
\begin{split}
\ve_i=&\a_i+\dots+\a_n\\
\ve_i+\ve_j=&\a_i+\dots+\a_{j-1}+2\a_j+\dots+2\a_n, \ i<j\\
\ve_i-\ve_j=&\a_i+\dots+\a_{j-1}, \ i<j.
\end{split}
\ee

Let $1<p<n$. Recall that
$$
\Delta_p^+(k)=\{\a\in \Delta^+|\ \a=k\a_p+\sum_{i\neq p}m_i\a_i, m_i\ge0, m_i\in Z\}.
$$
By \eqref{e-root-B-1} and \eqref{e-B}, we have

\be
\label{e-root-B-3}
\begin{split}
\Delta^+_p(1)
=& \{\ve_a|\ 1\le a\le p \}   \bigcup\{\ve_a+\ve_i| 1\le a\le p, p+1\le i\le  n\}\\
 & \bigcup\{\ve_a-\ve_i| 1\le a\le p, p+1\le i\le n   \},
\end{split}
\ee

 \be
\label{e-root-B-5}
\begin{split}
\Delta^+_p(2)=&\{\ve_a+\ve_b| 1\le a<b\le p\}.
\end{split}
\ee
 \be
\label{e-root-6}
\Delta^+_p(k)=\emptyset,
\ee
for $k\ge 3$.   The dimension of $(B_m,\a_p)$ is $\frac12p(4n-3p+1)$.  We denote the elements of the $\Delta^+_p(k)$'s by:  $X_{a i}=\ve_a-\ve_i$, $Y_{a i}=\ve_a+\ve_i$, $1\le a\le p$, $p+1\le i\le n$; $U_a=\ve_a$, $W_{a b}=\ve_a+\ve_b$, $1\le a, b\le p$. In the following $a, b,\dots$ will range from $1$ to $p$ and $i, j,\dots$ will range from $p+1$ to $n$. Thus
\bee
\begin{split}
\Delta^+_p(1)=&\{X_{ai}\}_{1\le a\le p;p+1\le i\le n}\bigcup \{Y_{a i}\}_{1\le a\le p;p+1\le i\le n}\bigcup\{U_a\}_{1\le a\le p};
\\
\Delta^+_p(2)=&\{W_{a b}\}_{1\le a<b\le p}.
\end{split}
\eee

Now recall that $N_{\a,\beta}=\pm (p+1)$ were $p$ is the largest integer so that $\beta-p\a$ is a root, and also the definition of $\wt N_{\a,\beta}$ in \eqref{wtN}.

\begin{lma}\label{l-N-B} Let $\a, \beta$ be positive roots in $(B_n,\a_p)$,  then $\wt N_{\a,\beta}=\sqrt2\,\sgn(N_{\a,\beta})$. If $\a-\beta\neq0$, then $\wt N_{\a,-\beta}=\sqrt2\,\sgn(N_{\a,-\beta})$.

\end{lma}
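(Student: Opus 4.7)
The plan is to reduce the claim to a short finite case check, using the fact that $B_n$ has roots of only two lengths ($1$ for short roots $\pm\varepsilon_i$ and $\sqrt 2$ for long roots $\pm\varepsilon_i\pm\varepsilon_j$). When $\alpha+\beta$ is not a root one has $N_{\alpha,\beta}=0$ and so $\widetilde N_{\alpha,\beta}=0=\sqrt 2\,\sgn(N_{\alpha,\beta})$ (with the convention $\sgn(0)=0$). So I only need to consider the case that $\alpha+\beta$ is a root, and show in that situation that $|\widetilde N_{\alpha,\beta}|=\sqrt 2$. The second assertion will follow by the same argument applied to the pair $(\alpha,-\beta)$, once we observe that $|N_{\alpha,-\beta}|=q+1$ where $q$ is the largest integer with $-\beta-q\alpha$ a root.

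I will organize the case check by the triple of root lengths $(|\alpha|,|\beta|,|\alpha+\beta|)$. The possibilities that actually occur in $B_n$ are $(\sqrt 2,\sqrt 2,\sqrt 2)$, $(\sqrt 2,1,1)$ together with $(1,\sqrt 2,1)$, and $(1,1,\sqrt 2)$; no other combination arises because the sum of two short roots is either $0$ or long, and the sum of two long roots has length at most $\sqrt 2$. In each case I invoke $|N_{\alpha,\beta}|=p+1$ where $p$ is the largest integer with $\beta-p\alpha$ a root. In the first two cases a direct inspection of the $\alpha$-string through $\beta$ (for instance $\alpha=\varepsilon_i-\varepsilon_j$, $\beta=\varepsilon_j-\varepsilon_k$, or $\alpha=\varepsilon_i-\varepsilon_j$, $\beta=\varepsilon_j$) gives $p=0$, so $|N|=1$, and the length ratio $|\alpha||\beta|/|\alpha+\beta|$ equals $\sqrt 2$. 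In the remaining case $\alpha=\varepsilon_i$, $\beta=\varepsilon_j$, the string is $\beta,\beta-\alpha=\varepsilon_j-\varepsilon_i$, stopping at $\beta-2\alpha$; so $p=1$, $|N|=2$, and the length ratio is $1/\sqrt 2$. In every case $|\widetilde N_{\alpha,\beta}|=\sqrt 2$.

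For the $\widetilde N_{\alpha,-\beta}$ statement, one applies the same trichotomy to $(|\alpha|,|\beta|,|\alpha-\beta|)$; the same three combinations of lengths appear, and identical root-string computations give either $|N|=1$ with length ratio $\sqrt 2$ or $|N|=2$ with length ratio $1/\sqrt 2$, and again $|\widetilde N_{\alpha,-\beta}|=\sqrt 2$.

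The only obstacle is the routine bookkeeping of making sure no length triple is overlooked, but since the root-length pattern in $B_n$ is so rigid this is short. No deeper ingredient than the standard structure constant formula \eqref{liebracketeuclid} and the elementary geometry of the $B_n$ root system is needed.
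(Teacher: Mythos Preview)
Your approach is correct and is essentially the paper's own: both argue by a case split on the triple of lengths $(|\alpha|,|\beta|,|\alpha+\beta|)$ and in each case determine $|N_{\alpha,\beta}|$ from the $\alpha$-string through $\beta$, then check that $\dfrac{|\alpha||\beta|}{|\alpha+\beta|}\,|N_{\alpha,\beta}|=\sqrt 2$.

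Two small points to tighten. First, your justification for why only four length triples occur is incomplete: the clause ``the sum of two long roots has length at most $\sqrt 2$'' is vacuous (every root has length $\le\sqrt 2$) and should read ``is again long''; moreover neither of your two stated reasons excludes the mixed triples $(1,\sqrt 2,\sqrt 2)$ and $(\sqrt 2,1,\sqrt 2)$. These do not occur because they would force $(\alpha,\beta)=-\tfrac12$, whereas in $B_n$ a short--long inner product is always an integer. Second, when you verify $p$ on a single representative (``for instance $\alpha=\varepsilon_i-\varepsilon_j$, $\beta=\varepsilon_j-\varepsilon_k$''), you are implicitly using that the length triple already fixes $(\alpha,\beta)$ and hence the whole $\alpha$-string; the paper makes this step explicit by first computing $(\alpha,\beta)$ from the lengths and then arguing directly (e.g.\ $|\beta-2\alpha|^2\notin\{1,2\}$) rather than by example.
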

\begin{proof}
Note that if $\sigma$ is a root, then either $|\sigma|^2=1$ or $|\sigma|^2=2$.  We begin by proving the first part of the Lemma.  Let  $\a, \beta$ be positive roots.  We may assume $\a+\beta$ is a root, otherwise the first part of the lemma is obviously true.

Suppose $|\a|^2=|\beta|^2=1$ and suppose $|\a+\beta|^2=1$, then $(\a,\beta)=-\frac12$, and this is impossible because one can see that $(\a,\beta)$ is an integer. Hence $|\a+ \beta|^2=2$ and $(\a,\beta)=0$. So  $\a-\beta$ is also a root \cite[p.324]{Fulton-Harris}. $ \a-2\beta$ is not a root because $|\a-2\beta|^2=5$.  Hence $N_{\a,\beta}=\pm2$. Therefore, by the definition of $\wt N_{\a,\beta}$ in \eqref{wtN}, $\wt N_{\a,\beta}=\sqrt 2\,\sgn(N_{\a,\beta}).$

Suppose $|\a|^2=1$, and $|\beta|^2=2$. As before, one can prove that $(\a,\beta)=-1$ and $N_{\a,\beta}=\pm1$. Hence  $\wt N_{\a,\beta}=\sqrt 2\,\sgn(N_{\a,\beta})$.

Suppose $|\a|^2=|\beta|^2=2$.  As before, one can prove that $(\a,\beta)=-1$, $N_{\a,\beta}=\pm1$  and hence  $\wt N_{\a,\beta}=\sqrt 2\,\sgn(N_{\a,\beta})$.

The case for $\wt N_{\a,-\beta}$ can be proved similarly.
\end{proof}

By Lemmas \ref{l-curvature-formula-2}, \ref{l-curvature-formula-3} and \ref{l-N-B} and  the fact that $R(\a,\b\beta,\gamma,\b\delta)=R(\a,\b\delta,\gamma,\b\beta)$ we have:
\begin{cor}\label{cor-curvature-classical-1}

Let $\a \in \Delta^+(i)$, $\beta \in \Delta^+(j)$, $\gamma \in \Delta^+(k)$ and $\delta\in \Delta^+(l)$.

\begin{enumerate}
\item
\bee
R_{\a\ba\beta\b\beta}=\left\{
                        \begin{array}{ll}
                           (\a,\beta)+\frac 12(\sgn(N_{\a,\beta})^2), & \hbox{$i=j=1$;} \\
  \frac12 (\a,\beta)                        , & \hbox{$i=1, j=2$;}\\
\frac1 2 (\a,\beta), & \hbox{$i=j=2$;}.
                        \end{array}
                      \right.
\eee

  \item If $\a-\beta\neq \delta-\gamma$ then $R(\a,\b\beta,\gamma,\b\delta)=0.$
  \item   If $\a-\beta=\delta-\gamma\neq0$, then for $(i,j,k,l)=(1,1,1,1)$,
  \bee
R(\a,\b\beta,\gamma,\b\delta)=
\left\{
\begin{array}{ll}
\frac12
\sgn(N_{\a,\gamma})\sgn(N_{\beta, \delta}), & \hbox{if $\a-\beta$ is not a root;} \\
\sgn(N_{\a,-\beta })\sgn(N_{ \gamma, -\delta}), & \hbox{if $\a+\gamma$ is not a root; } \\
-\frac12
\sgn(N_{\a,\gamma})\sgn(N_{ \delta, \beta}), & \hbox{if $\beta-\gamma\neq0$   is not a root. }\end{array}
 \right.
\eee
For other cases,
 \bee
R(\a,\b\beta,\gamma,\b\delta)=
\left\{
\begin{array}{ll}
  \frac12
\sgn(N_{\a,-\beta })\sgn(N_{ \gamma, -\delta}), & \hbox{if $(i,j,k,l)=(1,1,2,2)$; } \\
\frac12
\sgn(N_{\a,-\beta })\sgn(N_{ \gamma, -\delta}), & \hbox{if $(i,j,k,l)=(2,2,2,2)$; }\\
 \frac12\sgn(N_{\a,-\beta })\sgn(N_{ \gamma, -\delta})& \hbox{if $(i,j,k,l)=(1,2,2,1)$; }\end{array}
 \right.
\eee

\end{enumerate}
\end{cor}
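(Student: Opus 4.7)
The proof is a direct application of Lemmas \ref{l-curvature-formula-2}, \ref{l-curvature-formula-3}, and \ref{l-N-B}, specialised to $(B_n,\a_p)$. The two simplifying features I would exploit are: (a) Lemma \ref{l-N-B} reduces every $\wt N_{\a,\pm\beta}$ to $\sqrt 2\,\sgn(N_{\a,\pm\beta})$, so any product $\wt N_{\cdot,\cdot}\wt N_{\cdot,\cdot}$ collapses to $2\,\sgn(N)\sgn(N)$; and (b) by \eqref{e-root-6}, $\Delta^+_p(k)=\emptyset$ for $k\ge 3$, so only weights $i,j,k,l\in\{1,2\}$ occur, and any putative root in a higher grade automatically forces the corresponding structure constant to vanish.

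For part (1) I would substitute $(i,j)=(1,1),(1,2),(2,2)$ into Lemma \ref{l-curvature-formula-2}. In the cases $(1,2)$ and $(2,2)$ the hypothetical sum $\a+\beta$ would have to lie in an empty $\Delta^+_p(i+j)$, so $\wt N_{\a,\beta}=0$ and the formula collapses to $\tfrac{1}{j}(\a,\beta)$. In the case $(1,1)$, the identity $\wt N_{\a,\beta}^2=2\,\sgn(N_{\a,\beta})^2$ combined with the prefactor $\tfrac12\cdot\tfrac12\cdot\tfrac12$ yields the stated expression. Part (2) is just the first bullet of Lemma \ref{l-curvature-formula-3}.

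For part (3) I would enumerate admissible tuples $(i,j,k,l)\in\{1,2\}^4$ with $i+k=j+l$, and substitute into the $R_1+R_2$ decomposition of Lemma \ref{l-curvature-formula-3}(2), reading from the proof of Proposition \ref{prop-LiWuZheng} the convention $\xi_m=1$ for $m>0$ and $\xi_m=0$ otherwise. In the tuples $(1,1,2,2)$, $(1,2,2,1)$, and $(2,2,2,2)$ the factor $\wt N_{\a,\gamma}$ vanishes (since $\a+\gamma$ would sit in an empty grade), killing $R_1$; the surviving $R_2$ reproduces the stated formula once the indicator/Kronecker values for that tuple are tabulated. In the $(1,1,1,1)$ case both $R_1$ and $R_2$ are a priori nonzero, and the three listed sub-cases correspond respectively to $\a-\beta$, $\a+\gamma$, or $\beta-\gamma$ failing to be a root. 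In the first two, one of $R_1,R_2$ vanishes outright; the third is handled by applying the K\"ahler identity $R(\a,\b\beta,\gamma,\b\delta)=R(\a,\b\delta,\gamma,\b\beta)$ and relabeling $\beta\leftrightarrow\delta$, which converts the hypothesis ``$\beta-\gamma$ not a root'' into ``$\a-\delta$ not a root'' (using $\a-\delta=\beta-\gamma$) and thereby reduces it to sub-case one after accounting for the sign induced by the swap $\sgn(N_{\beta,\delta})\leftrightarrow\sgn(N_{\delta,\beta})$.

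The only substantive obstacle is careful bookkeeping: keeping the $\xi_m$ and $\delta_{ij}\delta_{kl}$ coefficients straight across all tuples, and tracking signs when translating the third sub-case through the K\"ahler symmetry. No deeper new idea is required beyond what is already packaged into the three cited lemmas.
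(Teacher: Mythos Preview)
Your proposal is correct and follows essentially the same approach as the paper: the paper's proof is the single sentence ``By Lemmas \ref{l-curvature-formula-2}, \ref{l-curvature-formula-3} and \ref{l-N-B} and the fact that $R(\a,\b\beta,\gamma,\b\delta)=R(\a,\b\delta,\gamma,\b\beta)$,'' and you have simply unpacked those citations case by case, including the use of the K\"ahler symmetry for the third sub-case of $(1,1,1,1)$. Your reading of the convention $\xi_m=1$ for $m>0$ and $\xi_m=0$ otherwise, and your observation that $\a+\gamma$ falls into an empty grade in the tuples $(1,1,2,2)$, $(1,2,2,1)$, $(2,2,2,2)$, are exactly the bookkeeping the paper suppresses.
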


 To compute the Ricci curvature, we know that $\Ric=\mu g$ and thus
\bee
\begin{split}
\mu=&\Ric(W_{12}, \b W_{12})\\
=&\sum_{a,i}\lf[R(W_{12}, \b W_{12} , X_{ai},\b X_{ai})+R(W_{12}, \b W_{12}, Y_{ai},\b Y_{ai})\ri]\\
&+\sum_a R(W_{12}, \b W_{12}, U_{a},\b U_{a})+\sum_{a<b}R(W_{12}, \b W_{12}, W_{ab},\b W_{ab})\\
=&\frac12\lf[2(n-p)+2(n-p)\ri]+1+\frac12\lf(p+ (p-2)\ri)\\
=&2n-p.
\end{split}
\eee

\begin{lma}\label{l-B-eigenvalue-1} Let $\lambda$ be the largest eigenvalue of the quadratic form $$\sum_{A,B }R_{A\b AB\b B}x_Ax_B$$  in the Weyl frame, where $x_A$ are reals.
\begin{itemize}
  \item [(a)] $\lambda\le 2n-p$ if and only if $5p+1\le 4n$.
  \item [(b)] If $5p+1< 4n$, then $\lambda=(2n-p)$ iff the corresponding eigenvector satisfies $x_A=x_B$ for all $A,B$.
  \item [(c)] If $5p+1=4n$, then there is an eigenvector with eigenvalue $(2n-p)$ such that $x_A\neq x_B$ for some $A\neq B$.
\end{itemize}
\end{lma}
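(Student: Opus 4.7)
The plan is to analyze the symmetric matrix $M := [R_{A\bar A B\bar B}]_{A,B \in \Delta_p^+}$ by combining the Einstein property with symmetry reductions. Since $\operatorname{Ric} = \mu g$ with $\mu := 2n-p$, every row sum of $M$ equals $\mu$, so the constant vector $(1,\ldots,1)$ is always a $\mu$-eigenvector, and the task reduces to deciding when $\mu$ is the largest eigenvalue and when it is simple.

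Using Corollary \ref{cor-curvature-classical-1} together with the description of $\Delta_p^+(k)$ in \S\ref{section-B}, I would first write the entries of $M$ explicitly: for example
\[R(X_{ai},X_{bj}) = \delta_{ab}+\delta_{ij},\quad R(X_{ai},Y_{bj}) = \delta_{ab} - \delta_{ij} + \tfrac12\delta_{ij}(1-\delta_{ab}),\]
\[R(U_a,U_b) = \delta_{ab} + \tfrac{1}{2}(1-\delta_{ab}),\quad R(X_{ai},W_{cd}) = \tfrac12(\delta_{ac}+\delta_{ad}),\]
and so on. Next, the involution $\epsilon_i \mapsto -\epsilon_i$ for $i > p$ preserves $\Delta_p^+$, swaps $X_{ai} \leftrightarrow Y_{ai}$, and fixes the $U_a, W_{ab}$; as it commutes with $M$, the ambient space splits orthogonally into a symmetric part ($x^X = x^Y$, with $x^U, x^W$ free) and an antisymmetric part ($x^Y = -x^X$, $x^U = x^W = 0$). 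A direct calculation shows $M$ acts on each $i$-slice of the antisymmetric part as $\tfrac12 I_p + \tfrac32 J_p$ (with $J_p$ the all-ones matrix), giving eigenvalues $1/2$ (multiplicity $p-1$) and $(3p+1)/2$ (the direction constant in $a$). On the symmetric part I would further decompose by $S_p \times S_{n-p}$-isotypic components: the trivial-trivial block is $3\times 3$ and has eigenvalues $2n-p$ together with a double $(p-1)/2$; the standard-trivial block (of multiplicity $p-1$) is $3\times 3$ with eigenvalues $-1/2$ and the two roots $\lambda_\pm$ of
\[\lambda^2 - \Bigl(2n-\tfrac{3p}{2}-\tfrac{1}{2}\Bigr)\lambda + \tfrac{(n-p)(p-2)}{2} = 0;\]
the remaining isotypic pieces (trivial-standard, standard-standard, and the $S_p$-components of $V_W$ beyond trivial and standard) contribute only the eigenvalues $(p-1)/2$, $-1/2$, and $0$.

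The crux is the strict inequality $\lambda_+ < 2n-p$. Rewriting it as $\sqrt{(2n-3p/2-1/2)^2 - 2(n-p)(p-2)} < 2n-p/2+1/2$ and squaring reduces it, via $(2n-p/2+1/2)^2 - (2n-3p/2-1/2)^2 = (p+1)(4n-2p) = 2(p+1)(2n-p)$, to the manifest $p(3n-2p+1) > 0$. Assembling the pieces, the full spectrum of $M$ lies in $\{2n-p,\, (3p+1)/2,\, \lambda_\pm,\, (p-1)/2,\, 1/2,\, 0,\, -1/2\}$ with $\lambda_\pm < 2n-p$, so the largest eigenvalue equals $\max\bigl(2n-p,\, (3p+1)/2\bigr)$. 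This is $\le 2n-p$ iff $5p+1 \le 4n$, giving (a). When $5p+1 < 4n$, every non-Ricci eigenvalue is strictly below $\mu$, forcing the $\mu$-eigenspace to be one-dimensional and spanned by $(1,\ldots,1)$, giving (b). When $5p+1 = 4n$, the antisymmetric eigenvectors for $(3p+1)/2 = \mu$ have $x^X = -x^Y \neq 0$ and hence non-constant components, giving (c). The main technical obstacle is carrying out the $S_p \times S_{n-p}$-isotypic decomposition of the symmetric subspace cleanly and verifying the sharp inequality $\lambda_+ < 2n-p$ in the standard-trivial block.
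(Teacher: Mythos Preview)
Your proposal is correct and shares the paper's key first move---the involution $X_{ai}\leftrightarrow Y_{ai}$ splitting the problem into a symmetric and an antisymmetric part, with the antisymmetric part yielding the critical eigenvalue $(3p+1)/2$---but then diverges substantially in how the symmetric part is handled. The paper does not compute the spectrum on the symmetric subspace at all: instead it observes that after symmetrizing the $X$- and $Y$-coefficients every entry in the relevant row of the eigenvalue equation becomes nonnegative (since $(X_{ai},X_{bj}+Y_{bj})=2\delta_{ab}\ge0$, $(U_a,\cdot)\ge0$, $(W_{ab},\cdot)\ge0$), and the row sum is exactly the Ricci constant $2n-p$; Lemma~\ref{l-eigenvalue} then gives $\lambda\le 2n-p$ immediately, and equality forces all components equal. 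Your route via the full $S_p\times S_{n-p}$ isotypic decomposition is more laborious but buys you the entire spectrum of $M$ (including the exact values $\lambda_\pm$, $(p-1)/2$, $-1/2$, $0$), information the paper neither needs nor obtains. The paper's row-sum argument is shorter and more robust---it is the template reused for $C_n$, $D_n$, and the exceptional cases---whereas your approach would have to be redone from scratch whenever the combinatorics of $\Delta_p^+$ changes.
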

\begin{proof}  We begin with the proof of $(a)$.  Let $v=(x_A)$ be an eigenvector corresponding to the largest eigenvalue $\lambda$ for the quadratic form.  Assume the components satisfy $max_A |x_A|=1$.  Let us denote the components  $x_A$ more specifically by $x_{ai}, y_{ai}, a\le p<i; u_a, a\le p; t_{ab}, a<b\le p$, and let us denote $R(X_{ai},\b X_{ai},  X_{bj}, \b X_{bj})$ by $R(X_{ai},X_{bj})$ etc.  Then  $P(v)=\sum_{A,B }R_{A\b AB\b B}x_Ax_B$ is equal to:

\be\label{l-B-eigenvalue-1-proof-e1}
\begin{split}
P(v)=&\sum_{a,b\le p<i, j}R(X_{ai},X_{bj})x_{ai}x_{bj}+\sum_{a,b\le p<i, j}R(Y_{ai},Y_{bj})y_{ai}y_{bj}\\
&+\sum_{a,b\le p<i, j}R(X_{ai},Y_{bj})x_{ai}y_{bj}+\sum_{a,b\le p<i, j}R(Y_{ai},X_{bj})y_{ai}x_{bj}\\
&+2\sum_{a,c\le p<i }R(X_{ai},U_c )x_{ai}u_c+2\sum_{a,c\le p<i }R(Y_{ai},U_c )y_{ai}u_c\\
&+2\sum_{a<b, c\le p<i }R(w_{ab},X_{ci} )t_{ab}x_{ci}+2\sum_{a<b, c\le p<i }R(w_{ab},Y_{ci} )t_{ab}y_{ci}\\
&+\sum_{a,b\le p}R(U_a,U_b)u_au_b+2\sum_{a<b, c\le p}R(w_{ab},U_c)t_{ab}u_c\\
&+\sum_{a<b\le p, c<d\le p}R(w_{ab},w_{cd})t_{ab}t_{cd}.
\end{split}
\ee

From Corollary \ref{cor-curvature-classical-1}, it is easy to see that $R(X_{ai},X_{bj})=R(Y_{ai},Y_{bj})$, $R(X_{ai},Y_{bj})$ $=R(Y_{ai},X_{bj})$, $R(X_{ai},U_{b})=R(Y_{ai},U_{b})$,
$R(X_{ai},W_{bc})=R(Y_{ai},W_{bc})$. We see that if we interchange $x_{ai}$ and $y_{ai}$, for all $a, i$ and obtain a vector $w$, then $P(v)=P(w)$ and $|v|=|w|$.  We may then assume that either $x_{ai}=y_{ai}$ for all $a,i$, or by considering $v-w$, that $x_{ai}=-y_{ai}$ and $u_a=t_{ab}=0 $ for all $a,b$.

Suppose $|u_a|=1$ for some $a$. We may assume that $u_a=1$.  By Corollary \ref{cor-curvature-classical-1}, $R(U_a,\b U_a, x,\b x)\ge 0$ because $(U_a,x)\ge0$ for all $x\in \Delta^+_p(k), k=1,2$.

 \be\label{e-urowB}
\begin{split}\lambda u_a=&\sum_{b\le p}R(U_a,U_b)u_b +\sum_{b\le p<i}R(X_{bi},U_a)x_{bi}+ \sum_{b\le  p<i}R(Y_{bi},U_a)y_{bi}\\
& +\sum_{c<d\le p}R(w_{cd},U_a) t_{cd}.
\end{split}
\ee
Notice that the coefficients are all non-negative and the sum is just $\Ric(U_a,\b U_a)=2n-p$. Hence $\lambda\le 2n-p$.
Moreover, if $\lambda=2n-p$ then   we must in fact have

\be\label{e-urowB-equal} x_{a,i}=y_{a,i}=u_b=t_{cd}=1
\ee
for all $a,b\le p<i$ and $c<d\le p$.

Since $(W_{ab},x)\ge0$ for all $x\in \Delta^+_p(k), k=1,2$. We have similar result.

Suppose $x_{ai}=1$ for some $a, i$.

{\bf Case 1} ($x_{bj}=y_{bj}$ for all $b,j.$):   As above, we have
\be
\begin{split}\lambda x_{ai} =&\sum_{b,j}R(X_{ai},X_{bj})x_{bj} +\sum_{b,j}R(X_{ai}, Y_{bj})y_{bj} + \sum_{b}R(X_{ai},U_b)u_b\\
& + \sum_{c<d}R(X_{ai},w_{cd}) w_{cd}.
\end{split}
\ee
Since $x_{bj}=y_{bj}$, this equation is the same as:
\be
\begin{split}\lambda x_{ai} =&\sum_{b,j}\frac12(R(X_{ai},X_{bj})+R(X_{ai}, Y_{bj})) x_{bj} +\sum_{b,j}\frac12(R(X_{ai},X_{bj})\\
&+R(X_{ai}, Y_{bj}))y_{bj}
 + \sum_{b}R(X_{ai},U_b)u_b + \sum_{c<d}R(X_{ai},w_{cd}) w_{cd}.
\end{split}
\ee
By Corollary \ref{cor-curvature-classical-1}, $R(X_{ai},X_{bj})+R(X_{ai}, Y_{bj})\ge0$ since $(X_{ai},X_{bj}+Y_{bj})=2\delta_{ab}\ge0$. Hence the coefficients are all nonnegative.  Also, the sum of the coefficients is still the Ricci curvature $2n-p$. Hence we have $\lambda\le 2n-p$ as before, and if equality holds then \eqref{e-urowB-equal} is true.

{\bf Case 2} ($x_{bj}=-y_{bj}$ and $u_c=w_{cd}=0 $ for all $b, c ,d ,j.$): Suppose  $x_{ai}=1$. Then
\be
\begin{split}\lambda x_{ai} =&\sum_{b,j}R(X_{ai},X_{bj})x_{bj} +\sum_{b,j}R(X_{ai}, Y_{bj})y_{bj}\\
=&\sum_{b,j}\lf(R(X_{ai},X_{bj})-R(X_{ai}, Y_{bj})\ri)x_{bj}
\end{split}
\ee
  By Corollary \ref{cor-curvature-classical-1}, $R(X_{ai},X_{bj})-R(X_{ai}, Y_{bj})\geq 0$ because $(X_{ai},X_{bj}-Y_{bj})=2\delta_{ij}$. Hence the coefficients are all non negative. The sum of the coefficients is:
\be
\sum_{b,j}\lf( (\delta_{ab}+\delta_{ij})-(\delta_{ab}-\delta_{ij}+\frac12\delta_{ij}(1-\delta_{ab}))\ri)
=p+\frac12(p+1)=\frac12(3p+1).
\ee
Here we have used the fact that $X_{ai}+X_{bj}$ is not a root, and $X_{ai}+Y_{bj}$ is a root if and only if $b\neq a$ and $j=i$.
Hence if $5p+1\le 4n$  then $\lambda\le 2n-p$.  Moreover, if  $5p+1< 4n$  then $\lambda< 2n-p$

Now suppose  $5p+1> 4n$. Let $v$ be such that $x_{ai}=-y_{ai}=1$, $u_a=w_{ab}=0$ for all $a, b$. Then
$$
P(v)=2\sum_{a, b\le p<i, j} (R(X_{ai},X_{bj})-R(X_{ai}, Y_{bj}))=p(n-p)(3p+1).
$$
On the other hand, $|v|^2=2p(n-p)$. Hence $P(v)>(2n-p)|v|^2$ because $5p+1> 4n$.

 The case that $y_{ai}=1$ for some $a,i$ is similar. This completes the proof of (a).

 To prove (b), suppose $5p+1< 4n$. Then $\lambda\le 2n-p$, and as $(2n-p)$ is always an eigenvalue we have $\lambda=2n-p$. Let $v$ be the corresponding eigenvector with components $x_{ai}, y_{ai},u_a, t_{cd}$.  Thus $P(v)=\lambda|v|^2$.   The above proof then shows if $x_{ai}=y_{ai}$ for all $a,i$, then \eqref{e-urowB-equal} must be true, while if $x_{ai}\neq y_{ai}$ for some $a,i$ then we must have $\lambda< 2n-p $ which is impossible by our assumption.
Hence (b) is true.

To prove (c), suppose $5p+1= 4n$.  Then $\lambda=2n-p$ in this case too.  Let $v$ be such that $x_{ai}=-y_{ai}=1$, $u_a=w_{ab}=0$ for all $a, b$. Then the computations above give $P(v)=\lambda|v|^2$. Since $x_{ai}\neq y_{ai}$, (c) is true.

\end{proof}

\begin{lma}\label{l-B-eigenvalue-2}    Let $\lambda$ be the largest eigenvalue of the quadratic form $$\sum_{A,B,C,D; A\neq B, C\neq D}R_{A\b B C\b D}x_{A  B}x_{C  D}$$ in the Weyl frame, where $x_{A  B}=\ol{x_{B  A}} $.
\begin{itemize}
    \item [(a)] If   $5p+1\le 4n$, then
  $\lambda\le  2n-p $ .
    \item [(b)] If $5p+1< 4n$, then $\lambda< 2n-p $.
  \end{itemize}
\end{lma}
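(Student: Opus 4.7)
The plan is to mirror the row-sum strategy used in the proof of Lemma \ref{l-B-eigenvalue-1}, combined with the sharp vanishing property from Corollary \ref{cor-curvature-classical-1}(2) which makes the off-diagonal quadratic form effectively block-diagonal. Let $\lambda$ be the largest eigenvalue with eigenvector $(x_{AB})$ normalized so that $|x_{A_0 B_0}| = 1$ is maximal. Writing the eigenvalue equation at the $(A_0, B_0)$ component and taking moduli gives
\[
|\lambda| \;\le\; \sum_{(C,D):\, C\neq D}\bigl|R_{A_0 \bar{B}_0 C \bar{D}}\bigr|,
\]
and by Corollary \ref{cor-curvature-classical-1}(2), each summand vanishes unless $\eta := A_0 - B_0 = D - C$. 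Thus the row sum reduces to counting pairs $(C,D)$ of distinct Weyl-frame vectors with fixed difference $\eta\in V\setminus\{0\}$, weighted by the corresponding curvature entries.

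The proof then proceeds by case analysis on $\eta$, organized by its $\ve$-decomposition in $\R^n$: differences of the types $\pm(\ve_a\pm\ve_b)$ with $a,b\le p$, $\pm(\ve_a\pm\ve_i)$ with $a\le p<i$, $\pm(\ve_i\pm\ve_j)$ with $p<i,j$, $\pm\ve_a$, $\pm\ve_i$, $\pm 2\ve_a$, and so on. For each type, one enumerates the eligible pairs $(C,D)$ among the Weyl-frame classes $\{X_{ai},Y_{ai},U_a,W_{ab}\}$ and evaluates the corresponding curvature via Corollary \ref{cor-curvature-classical-1}(3), Lemma \ref{l-N-B}, and when necessary directly from Lemma \ref{l-curvature-formula-3}. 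For most $\eta$ the row sum is safely below $2n-p$; the critical configurations, analogous to Case~2 in the proof of Lemma \ref{l-B-eigenvalue-1}, should contribute at most $\tfrac12(3p+1)$ in total, matching the bound from the diagonal quadratic form. The inequality $\tfrac12(3p+1)\le 2n-p$ (resp.\ strict) is precisely $5p+1\le 4n$ (resp.\ strict), which yields parts (a) and (b).

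The main obstacle is the combinatorial bookkeeping across all types of $\eta$: in particular one must verify that no configuration produces a row sum exceeding the Ricci bound, exploiting both the fact that many of the relevant curvature entries are $\pm\tfrac12$ (rather than $\pm 1$) and potential sign cancellations between the $R_1$ and $R_2$ contributions of Lemma \ref{l-curvature-formula-3}. An additional subtlety appears when $(A_0,B_0,C,D)$ all lie in $\Delta_p^+(1)$ with both $\alpha+\gamma$ and $\alpha-\beta$ nonzero roots, a case not directly covered by the three subcases listed in Corollary \ref{cor-curvature-classical-1}(3) for degree pattern $(1,1,1,1)$; in such situations one must return to Lemma \ref{l-curvature-formula-3} to compute the explicit curvature value, and then carry out the requisite sign analysis to close the estimate.
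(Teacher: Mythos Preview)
Your approach is essentially the paper's: both bound the eigenvalue by the row sums $S_{AB}=\sum_{C\neq D}|R_{A\bar B C\bar D}|$ via Lemma~\ref{l-eigenvalue} and carry out a case analysis, with the paper organizing its ten cases by the Weyl-frame types of the pair $(A,B)$ rather than by $\eta=A-B$ (a slightly finer stratification, since different $(A,B)$ sharing the same $\eta$ can give different row sums) and obtaining the critical value $S_{AB}=\tfrac12(3p+1)$ exactly when $(A,B)=(X_{ai},X_{aj})$ or $(X_{ai},Y_{aj})$ with $i\neq j$. The ``additional subtlety'' you flag---quadruples in $\Delta^+_p(1)$ with $\alpha-\beta$, $\alpha+\gamma$, $\beta-\gamma$ all nonzero roots---in fact never occurs in the $B_n$ enumeration, so one of the three subcases of Corollary~\ref{cor-curvature-classical-1}(3) always applies directly and no sign-cancellation analysis is needed.
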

\begin{proof}
We want to estimate \be\label{SAB} S_{AB}=\sum_{x\neq y}|R_{A\b B y\b x}|\ee for each case of $A,B$.  Note that $S_{AB}=S_{BA}$. Recall the following properties of the curvature from Corollary \ref{cor-curvature-classical-1} which we repeat here for convenience of reference
\begin{enumerate}
\item[(C1)]  If $A-B \neq x-y$ then $R_{A\b B y\b x}=0$.
\item[(C2)] If neither $A-B$ nor $A+y$ are roots then  $R_{A\b B y\b x}=0$.
\end{enumerate}
In each case we will use these to reduce the terms in \eqref{SAB} as much as possible.  Then Corollary \ref{cor-curvature-classical-1} will be used to calculate the absolute values of the remaining curvature terms.

\vspace{12pt}

CASE (i) $A=X_{ai}, B=X_{bj}$ with $(a,i)\neq (b,j)$.

Note that $A,B\in \Delta^+_p(1)$.  By  (C1) we may assume that $x,y\in\Delta^+_p(1)$ or $x,y\in\Delta^+_p(2)$.  Note that the sum of the coordinates of $X$'s is 0, that the sum of the coordinates of $Y$'s is 2, that the sum of the coordinates of $U$'s is 1 and that the sum of the coordinates of $W$'s is 2.  Thus by  (C1),  \eqref{SAB}  reduces to:

\be\label{SABi} \begin{split}S_{AB}=&\sum_{c,k,d,l}|R(X_{ai}, \b X_{bj}, X_{c k},\b X_{d l})|+\sum_{c,k,d,l}|R(X_{ai}, \b X_{bj}, Y_{c k},\b Y_{d l})|\\&+\sum_{c,d}|R(X_{ai}, \b X_{bj}, U_c,\b U_d)|+\sum_{c,d,e,f}|R(X_{ai}, \b X_{bj}, W_{c d}, \b W_{e f})|\\=:&I+II+III+IV
\end{split}
\ee

If $a\neq b$, $i\neq j$ then: All terms in $III, IV$ are zero by (C1).  All terms in $I$ are zero by (C1) except for $|R(X_{ai}, \b X_{bj}, X_{bj},\b X_{ai})|$  which is zero by $(C2)$.  By (C1), the only non-zero term in $II$ is $|R(X_{ai}, \b X_{bj}, Y_{bi},\b Y_{aj})|=\frac12$.  Hence $S_{AB}=1/2<2n-p$ because $p<n$.
\vspace{12pt}

If $a=b$, $i\neq j$ then: All terms in $III, IV$ are zero by (C1).  By (C1), the only non-zero terms in $I$ are $|R(X_{ai}, \b X_{aj}, X_{cj},\b X_{ci})|$ for any $c$, leaving $I=p$.  By (C1), the only non-zero terms in $II$ are $|R(X_{ai}, \b X_{aj}, Y_{cj},\b Y_{ci})|$ for any $c$, giving a contribution of $1$ from the case $c=a$ and  $1/2(p-1)$ from the cases  $c\neq a$.   Hence $S_{AB}=p+1+\frac{1}{2}(p-1)=3p/2 + 1/2 \leq 2n-p$ if and only if $5p+1\le 4n$, and $S_{AB}< 2n-p$ if and only if $5p+1< 4n$.
\vspace{12pt}

If $a\neq b$, $i= j$, we may assume that $a<b$, then:  By (C1), the only non-zero terms in $I$ are $|R(X_{ai}, \b X_{bi}, X_{bk},\b X_{ak})|$ for any $k$, leaving $I=n-p$.   By (C1), the only non-zero terms in $II$ are $|R(X_{ai}, \b X_{bi}, Y_{bk},\b Y_{ak})|$ leaving a contribution of $1/2$ when $k=i$ and a contribution of $n-p-1$ for the cases when $k\neq i$.  By (C1), the only non-zero term in $III$ is $|R(X_{ai}, \b X_{bi}, U_b,\b U_a)|$ leaving $III=1$. By (C1), the only non-zero terms in $IV$ are $|R(X_{ai}, \b X_{bi}, W_{bc},\b W_{ac})|$ for $c>b$,  or $|R(X_{ai}, \b X_{bi}, W_{cb},\b W_{ca})|$ for $c<a$, or $|R(X_{ai}, \b X_{bi}, W_{bc},\b W_{ac})|$ for $a<c<b$ in which cases the respective contributions to $IV$ are $1/2(p-b), 1/2(a-1), 1/2(b-a-1)$ respectively.   Hence $S_{AB}=(n-p)+\frac12+(n-p-1)+1+ \frac12(p-b) +\frac12(a-1)+ \frac12(b-a-1)=2n-\frac32p-\frac12<2n-p$
\vspace{12pt}

CASE (ii) $A=X_{ai}, B=Y_{bj}$.

Note that $A,B\in \Delta^+_p(1)$.  By (C1), $x,y\in\Delta^+_p(1)$ or $x,y\in\Delta^+_p(2)$ and \eqref{SAB}  reduces to:

\be\label{SABii} \begin{split}S_{AB}=&\sum_{c,k,d,l}|R(X_{ai}, \b Y_{bj}, Y_{c k},\b X_{d l})|
\end{split}
\ee

If $a\neq b$, $i\neq j$:  then by (C1), the only non-zero term in \eqref{SABii} is given by

$|R(X_{ai}, \b Y_{bj}, Y_{bi},\b X_{aj})|=\frac12$.  Thus
$S_{AB}=\frac12<2n-p$.
\vspace{12pt}

If $a=b$, $i\neq j$: then by (C1), the only non-zero terms in \eqref{SABii} are $|R(X_{ai}, \b Y_{aj}, Y_{cj},\b X_{ci})|$ or $|R(X_{ai}, \b Y_{aj}, Y_{ci},\b X_{cj})|$,  for any $c$.  In the first case the contribution to \eqref{SABii} is $p$ and in the second case the contribution to \eqref{SABii} is $1$ when $c=a$ and $\frac12(p-1)$ from the cases $c\neq a$.  Thus  $S_{AB}=p+1+\frac12 (p-1)=\frac32 p +\frac12 \leq 2n-p$ if and only if $5p+1\le 4n$, and $S_{AB}< 2n-p$ if and only if $5p+1< 4n$.
\vspace{12pt}

If $a\neq b$, $i= j$ then by (C1), the only non-zero term in \eqref{SABii} is given by $|R(X_{ai}, \b Y_{bi}, Y_{bi},\b X_{bi})|=\frac12$.  Thus $S_{AB}=\frac12<2n-p$.
\vspace{12pt}

If $a=b$, $i= j$ then by (C1), the only non-zero terms in \eqref{SABii} are $|R(X_{ai}, \b Y_{ai}, Y_{ci},\b X_{ci})|$ for any $c$.   Thus , $S_{AB}=\frac12(p-1)<2n-p$.
\vspace{12pt}

CASE (iii) $A=X_{ai}, B=U_b$.

Note that $A,B\in \Delta^+_p(1)$.  By  (C1), $x,y\in\Delta^+_p(1)$ or $x,y\in\Delta^+_p(2)$ and \eqref{SAB}  reduces to:

\be\label{SABiii} \begin{split}S_{AB}=& \sum_{c,d,j}|R(X_{a i}, \b U_b, U_c,\b X_{dj})| + \sum_{c,d,j}|R(X_{a i}, \b U_b, Y_{cj},\b U_d|)=:I+II
\end{split}
\ee

If $a\neq b$ then:  All terms in $I$ are zero by (C2).   By (C1), the only non-zero term in $II$ is $|R(X_{a i}, \b U_b, Y_{bi},\b U_a|$  leaving $II=1$ .  Thus $S_{AB}=1 <2n-p$.
\vspace{12pt}

If $a=b$ then:  By (C1), the only non-zero terms in $I$ are $|R(X_{a i}, \b U_a, U_c,\b X_{ci})|$ for any $c$, leaving $I=p$.  By (C1), the only non-zero terms in $II$ are given by $|R(X_{a i}, \b U_a, Y_{ci},\b U_c)|$ for any $c$, and the contribution to $II$ is $1$ when $c=a$ and is $\frac12(p-1)$ from the cases $c\neq a$.  Thus $S_{AB}=\frac32p+\frac12<2n-p$
\vspace{12pt}

CASE (iv) $A=X_{ai}, B=W_{bc}$.

Note that $A\in \Delta^+_p(1), B\in \Delta^+_p(2)$.  By  (C1), $x\in\Delta^+_p(1)$ and $y\in\Delta^+_p(2)$ and  \eqref{SAB}  reduces to:

\be\label{SABiv}
\begin{split}
S_{AB}=&  \sum_{d,j,e,f} |R(X_{a i}, \b W_{bc}, W_{ef},\b X_{dj})|\\
\end{split}
\ee

If $a=b$ then by (C1), the only non-zero terms in  \eqref{SABiv} are  given by $ |R(X_{a i}, \b W_{ac}, W_{dc},\b X_{di})|$ for $d<c$, or $ |R(X_{a i}, \b W_{ac}, W_{cf},\b X_{fi})|$ for $f>c$.  In the first case the contribution is $c-1$ and the contribution in the second case is $p-1+c$.  Thus $S_{AB}=p-1<2n-p$.
\vspace{12pt}

If $a\neq b$ then by (C1) and (C2) the only non-zero terms  in \eqref{SABiv} are when $a=c$ in which case we get as above, that  $S_{AB}=p-1<2n-p$.
\vspace{12pt}

CASE (v) $A=Y_{ai}, B=Y_{bj}$, $(a,i)\neq (b,j)$. Similar to (i)

CASE (vi) $A=Y_{ai}, B=U_{b}$. Similar to (iii).

CASE (vii) $A=Y_{ai}, B=W_{bc}$. Similar to (iii).

CASE (viii) $A=U_a$, $B=U_b$, $a<b$.

From (C1) it is not hard to see here that \eqref{SAB}  reduces to:

\bee
\begin{split}
S_{AB}=&  \sum_{c,d,k,l} |R(U_a,\b U_b, X_{ck},\b X_{dl})| +\sum_{c,d,k,l} |R(U_a,\b U_b, Y_{ck},\b Y_{dl})|\\
&+\sum_{c,d} |R(U_a,\b U_b, U_c,\b U_d)|+\sum_{c,d,e,f} |R(U_a,\b U_b, W_{cd},\b W_{ef})|\\
=&I+II+III+IV\\
\end{split}
\eee

Now by (C1) the only non-zero terms in $I$ are $|R(U_a,\b U_b, X_{bi},\b X_{ai})|$ for any $i$ leaving $I=n-p$.  We similarly get $II=n-p$.  By (C1), the only non-zero term in $III$ is  $|R(U_a,\b U_b, U_b,\b U_a)|$ leaving $III=\frac12$.  By (C1), the only non-zero terms in $IV$ are  $|R(U_a,\b U_b, W_{bc},\b W_{ac})|$ for $c>b$, or $|R(U_a,\b U_b, W_{cb},\b W_{ca})|$ for $c<a$, or $|R(U_a,\b U_b, W_{bc},\b W_{ca})|$ for $a<c<b$,  in which cases the respective contributions to $IV$ are $1/2(p-b), 1/2(a-1), 1/2(b-a-1)$ respectively.
Thus $S_{AB}=(n-p)+(n-p)+\frac12+\frac12\lf[(p-b)+(a-1)+(b-a-1)\ri]=2n-\frac32p-\frac12<2n-p$.
\vspace{12pt}

CASE (ix) $A=U_a$, $B=W_{bc}$, $b<c$.

Note that $A\in\Delta^+_p(1)$ and  $B\in \Delta^+_p(2)$.  By  (C1),  $x\in\Delta^+_p(1)$ and  $y\in \Delta^+_p(2)$  and  \eqref{SAB}  reduces to:

\be\label{SABix}
\begin{split}
S_{AB}=&  \sum_{d,e,f} |R(U_a,\b W_{bc}, W_{de},\b U_{f})|
\end{split}
\ee

Note that $A+x$ is never a root  and thus by (C2), the only non-zero terms are when $a=b$ or $a=c$.  If $a=b$ then by (C1) the only non-zero terms are $|R(U_a,\b W_{ac}, W_{dc},\b U_{d})|$ for $d<c$, or $|R(U_a,\b W_{ac}, W_{cd},\b U_{d})|$ for $c<d$, in which cases the respective contributions to $S_{AB}$ are $1/2(c-1), 1/2(p-c)$ respectively .  Thus when $a=b$, and similarly when $a=c$, we have $S_{AB}=\frac12(c-1)+\frac12(p-c)=\frac12(p-1)<2n-p$.
\vspace{12pt}

CASE (x) $A=W_{ab}, B=W_{cd}$, $(a,b)\neq (c,d)$. We may assume that $a\le c$.

Note that $A,B \in \Delta^+_p(2), B\in \Delta^+_p(2)$.  By  (C1), $x,y\in\Delta^+_p(1)$ or $x,y\in\Delta^+_p(2)$ and  \eqref{SABix}  reduces to:

\bee\label{SABx}
\begin{split}
S_{AB}=&  \sum_{e,f,i,j} (|R(W_{ab},\b W_{cd}, X_{ei},\b X_{fj})|+  \sum_{e,f,i,j} (|R(W_{ab},\b W_{cd}, Y_{ei},\b Y_{fj})|\\
&+\sum_{e,f} |R(W_{ab},\b W_{cd}, U_e ,\b U_{f})|+\sum_{e,f,g,h}|R(W_{ab},\b W_{cd}, W_{ef},\b W_{gh })|\\
 =&I+II+III+IV\\
\end{split}
\eee

Note that $A+y$ is never a root for any positive root $y$.  Thus by (C2), all terms in $I, II, III, IV$ are zero unless either $a=c$ or $b=c$ or $b=d$.  Assume that $a=c$, and without loss of generality that $b<d$.  By (C1), the only non-zero terms in $I$ are  $|R(W_{ab},\b W_{ad}, X_{di},\b X_{bi})|$ for any $i$,  leaving $I=\frac12 (n-p)$.   We similarly get $II=\frac12 (n-p)$. By (C1), the only non-zero term in $III$ is$ |R(W_{ab},\b W_{cd}, U_d ,\b U_{b})|$ leaving $III=\frac12$. By (C1), the only non-zero terms in $IV$ are $|R(W_{ab},\b W_{ad}, W_{de},\b W_{be })|$ for $e>d$, or $|R(W_{ab},\b W_{ad}, W_{ed},\b W_{ eb})|$ for $e<b$, or $|R(W_{ab},\b W_{ad}, W_{ed},\b W_{be })|$ for $b<e<d$, in which cases the respective contributions to $IV$ are $1/2(p-d), 1/2(b-1), 1/2(d-b-1)$. Thus when $a=c$, and similarly when  $b=c$ or $b=d$, we have
$S_{AB} =\frac12(n-p)+\frac12(n-p)+\frac12+\frac12\lf[(p-d)+(b-1)+(d-b-1)\ri]=n-\frac12 p-\frac12 <2n-p$
\vspace{12pt}


 This completes the proof of the Lemma.
\end{proof}
\begin{thm}\label{t-B} The \K $C$-space $(B_n,\a_p)$, $n\ge 3$, $1<p<n$ satisfies $QB\ge0$ if and only if $5p+1\le 4n$.  Moreover, $QB>0$ if and only if $5p+1< 4n$.
 \end{thm}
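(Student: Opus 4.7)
My strategy is to apply Corollary \ref{c-QB}, which reduces the question of whether $QB\ge 0$ (respectively $QB>0$) holds on $(B_n,\alpha_p)$ to bounding the largest eigenvalues of the two quadratic forms
\[
Q_1(x)=\sum_{A,B} R_{A\bar A B\bar B}\,x_A x_B,\qquad Q_2(x)=\sum_{\substack{A,B,C,D\\ A\neq B,\,C\neq D}} R_{A\bar B C\bar D}\,x_{AB}x_{CD}
\]
by the Einstein constant $\mu$, with an additional simplicity/strictness requirement for $QB>0$. The Ricci computation carried out above in the Weyl frame gives $\mu=2n-p$, so the target threshold is $2n-p$ in both cases.

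Sufficiency then follows immediately from the two preceding lemmas. If $5p+1\le 4n$, Lemma \ref{l-B-eigenvalue-1}(a) bounds the largest eigenvalue of $Q_1$ by $2n-p$, and Lemma \ref{l-B-eigenvalue-2}(a) does the same for $Q_2$, so Corollary \ref{c-QB} yields $QB\ge 0$. If $5p+1<4n$, Lemma \ref{l-B-eigenvalue-1}(b) strengthens this by showing that the eigenvalue $2n-p$ of $Q_1$ is attained only by the all-equal eigenvector (which corresponds to the \K form $\omega$ itself), while Lemma \ref{l-B-eigenvalue-2}(b) upgrades the bound on $Q_2$ to a strict inequality, so the sharper hypothesis of Corollary \ref{c-QB} delivers $QB>0$.

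For necessity I would convert the explicit vectors produced in Lemma \ref{l-B-eigenvalue-1} into failures of $QB$ via the identity \eqref{bochnerQB} and Lemma \ref{l-QB-quadractic-1}. If $5p+1>4n$, Case 2 in the proof of Lemma \ref{l-B-eigenvalue-1}(a) produces a real vector $v$ with $x_{ai}=-y_{ai}=1$ and $u_a=t_{ab}=0$ satisfying $Q_1(v)>(2n-p)|v|^2$; taking $X=\sum_A \ii v_A\,\eta_A\wedge\overline{\eta_A}$, the identity \eqref{bochnerQB} forces $(G-F)(X,X)<0$, so $QB\ge 0$ fails at the base point by Lemma \ref{l-QB-quadractic-1}(a). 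If $5p+1=4n$, Lemma \ref{l-B-eigenvalue-1}(c) furnishes an eigenvector of $Q_1$ with eigenvalue $2n-p$ whose components are not all equal; the associated real $(1,1)$ form $X$ then lies outside $\R\omega$ yet satisfies $(G-F)(X,X)=0$, contradicting $QB>0$ by Lemma \ref{l-QB-quadractic-1}(b).

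The substantive work---the case-by-case eigenvalue analysis exploiting the $\mathbb{Z}_2$-symmetry $x_{ai}\leftrightarrow y_{ai}$ for $Q_1$ and the row-sum bookkeeping for $Q_2$---is already absorbed into Lemmas \ref{l-B-eigenvalue-1} and \ref{l-B-eigenvalue-2}, where the critical threshold $5p+1$ versus $4n$ first emerges. What remains for Theorem \ref{t-B} itself is essentially assembly, the only mild subtlety being the verification in the $5p+1=4n$ borderline case that the eigenvector of $Q_1$ realizing $\mu$ has non-constant components (so that the resulting $(1,1)$ form does not lie in $\R\omega$), which is exactly the content of Lemma \ref{l-B-eigenvalue-1}(c).
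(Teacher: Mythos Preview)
Your proposal is correct and follows essentially the same route as the paper's own proof: both assemble the theorem from Corollary~\ref{c-QB}, Lemma~\ref{l-QB-quadractic-1}, and the eigenvalue estimates in Lemmas~\ref{l-B-eigenvalue-1} and~\ref{l-B-eigenvalue-2}, with the necessity direction coming from the explicit test vector (and the borderline eigenvector of part~(c)) produced in Lemma~\ref{l-B-eigenvalue-1}. Your write-up is slightly more explicit in separating sufficiency from necessity and in spelling out how the diagonal $(1,1)$-form built from the eigenvector witnesses $(G-F)(X,X)\le 0$, but the substance is identical.
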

 \begin{proof} The first statement follows from part (a) of Lemmas \ref{l-B-eigenvalue-1} and \ref{l-B-eigenvalue-2} and  Corollary \ref{c-QB}.   For the second statement, note that  $QB>0$ iff $G-F>0$  on $\Omega^{1,1}_{\R} (p) \setminus \R\omega(p)$ by Lemma \ref{l-QB-quadractic-1}. On the other hand, here $G=(2n-p)Id$ on $\Omega^{1,1}_{\R} (p) \setminus \R\omega(p)$, and thus by parts (b) and (c) of Lemma \ref{l-B-eigenvalue-1} and part (b) of Lemma \ref{l-B-eigenvalue-2} we have $G-F>0$ iff $5p+1< 4n$.  Thus we have  $QB>0$ if and only if $5p+1< 4n$.
\end{proof}

\subsection{The spaces $(D_n,\a_p)$}\label{section-D}

In this section we will consider the space $(D_n,\a_p)$, with $n\ge 4$, $1<p<n-1$.
Let $V=\R^n$ and $\ve_i$ be as before.  The root system for $D_n$ is:
\be\label{e-root-D-1}
\Delta=\{\pm \ve_i\pm \ve_j| 1\le i,j\le n,i\neq j\}.
\ee
Positive roots are:
\be\label{e-root-D-2}
\Delta^+=\{ \ve_i+ \ve_j\}_{i<j}\cup\{\ve_i-\ve_j\}_{i<j}.
\ee
Simple positive roots are:

\be
\label{e-root-D-3}
\a_1=\ve_1-\ve_2,\a_2=\ve_2-\ve_3,\dots,\a_{n-1}=\ve_{n-1}-\ve_n,\a_n=\ve_{n-1}+\ve_n.
\ee
In terms of the  $\a_i$'s the positive roots are
\be\label{e-D}
\begin{split}
\ve_i+\ve_j=&\a_i+\dots+\a_{j-1}+2\a_j+\dots+2\a_{n-2}+\a_{n-1}+\a_n, \ i<j\le n-2\\
\ve_i+\ve_{n-1}=&\a_i+\dots+\a_n, \ i<n-1\\
\ve_i+\ve_n=&\a_i+\dots+\a_{n-2}+\a_n, \ i<n-1\\
\ve_{n-1}+\ve_n=&\a_n\\
\ve_i-\ve_j=&\a_i+\dots+\a_{j-1}, \ i<j.
\end{split}
\ee

Let $1<p<n-1$.   By \eqref{e-root-D-2} and \eqref{e-D} we have

\be
\label{e-root-D-4}
\begin{split}
\Delta^+_p(1)=
& \{\ve_a-\ve_i| 1\le a\le p, p+1\le i\le n \}\bigcup\{\ve_a+\ve_i| 1\le a\le p, p+1\le i\le n\},
\end{split}
\ee

 \be
\label{e-root-D-5}
\begin{split}
\Delta^+_p(2)=&\{\ve_a+\ve_b| 1\le a<b \le p\}.
\end{split}
\ee
\be\Delta^+_p(k)=\emptyset\ee for $k\geq 0$.  The dimension is $\frac12p(4n-3p-1)$. The structure of the roots are similar to $(B_n,\a_p)$ except $U_a$'s do not appear. Hence the computations are the basically the same. In this case $\Ric=(2n-p-1)$.

 \begin{thm}\label{t-D} The \K $C$-space $(D_n,\a_p)$, $n\ge 4$, $1<p<n-1$ satisfies $QB\ge0$ if and only if $5p+3\le 4n$.  Moreover, $QB>0$ if and only if $5p+3< 4n$.
 \end{thm}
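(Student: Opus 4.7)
The strategy is to mirror the proof of Theorem \ref{t-B} for $(B_n,\a_p)$. As the authors have observed, $\Delta_p^+$ for $(D_n,\a_p)$ has the same grading as for $(B_n,\a_p)$ except that the singleton roots $U_a = \ve_a$ are absent from $\Delta_p^+(1)$; the remaining vectors $X_{ai} = \ve_a - \ve_i$, $Y_{ai} = \ve_a + \ve_i$ (grade $1$) together with $W_{ab} = \ve_a + \ve_b$ (grade $2$) are identical in form. All roots of $D_n$ have squared length $2$, so the relevant case of the proof of Lemma \ref{l-N-B} gives $\wt N_{\a,\pm\beta} = \sqrt{2}\,\sgn(N_{\a,\pm\beta})$, and Corollary \ref{cor-curvature-classical-1} transfers verbatim, with every entry involving a $U_a$ deleted. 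Mimicking the Ricci computation for $B_n$ on $W_{12}$ yields $\Ric = \mu g$ with
$$\mu = \sum_{a,i}\bigl[R(W_{12},X_{ai}) + R(W_{12},Y_{ai})\bigr] + \sum_{a<b} R(W_{12},W_{ab}) = \tfrac{1}{2}\bigl(2(n-p) + 2(n-p)\bigr) + \tfrac{1}{2}\bigl(p + (p-2)\bigr) = 2n-p-1,$$
the change from $B_n$ being solely the loss of the $\sum_a R(W_{12},U_a) = 1$ contribution.

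Next I would reprove the analog of Lemma \ref{l-B-eigenvalue-1}, bounding the largest eigenvalue $\lambda$ of $P(v) = \sum_{A,B} R_{A\b A B\b B}x_A x_B$. Since Corollary \ref{cor-curvature-classical-1} gives $R(X_{ai},X_{bj}) = R(Y_{ai},Y_{bj})$ and $R(X_{ai},Y_{bj}) = R(Y_{ai},X_{bj})$, the $X \leftrightarrow Y$ swap symmetry reduces an extremal eigenvector to either the symmetric form $x_{ai} = y_{ai}$ or the antisymmetric form $x_{ai} = -y_{ai}$ with $t_{cd} = 0$; the case of a $W$-component being extremal is handled directly, since all coefficients $R(W_{ab},\cdot)$ are nonnegative and sum to the Ricci eigenvalue. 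In the symmetric case, each row sum is likewise nonnegative and equals $2n-p-1$, giving $\lambda \leq 2n-p-1$. In the antisymmetric case, the $x_{ai}$ row sum is
$$\sum_{b,j}\bigl[R(X_{ai},X_{bj}) - R(X_{ai},Y_{bj})\bigr] = \sum_{b,j}\bigl[2\delta_{ij} - \tfrac{1}{2}\delta_{ij}(1-\delta_{ab})\bigr] = \tfrac{3p+1}{2},$$
so $\lambda \leq \tfrac{3p+1}{2}$, and combined with the symmetric bound this yields $\lambda \leq 2n-p-1$ iff $5p+3 \leq 4n$. When $5p+3 > 4n$, the vector $v$ with $x_{ai} = -y_{ai} = 1$, $t_{cd} = 0$ gives $P(v)/|v|^2 = \tfrac{3p+1}{2} > 2n-p-1$, showing sharpness; when $5p+3 = 4n$, this same vector attains the eigenvalue $2n-p-1$ with nonconstant components, while when $5p+3 < 4n$ strict inequality forces any eigenvector at $\lambda = 2n-p-1$ to be a constant vector.

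Finally, for the off-diagonal form $\sum R_{A\b BC\b D}x_{AB}x_{CD}$ (analog of Lemma \ref{l-B-eigenvalue-2}), I would run the row-sum case analysis exactly as in the $B_n$ proof. The four cases $(A,B)$ involving a $U_a$ disappear. For every remaining case, the corresponding row sum $S_{AB}$ either coincides with the $B_n$ value (when no $U$ curvature enters) or is strictly smaller (when the earlier $III$-contribution from $U$ is dropped). In every case, one verifies $S_{AB} \leq 2n-p-1$ under $n \geq 4$, $1 < p < n-1$, with equality possible only in the cases $A = X_{ai}, B = X_{aj}$ or $A = X_{ai}, B = Y_{aj}$ with $i \neq j$, where $S_{AB} = \tfrac{3p+1}{2}$ meets the bound precisely when $5p+3 \leq 4n$. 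Combining these two eigenvalue bounds with Corollary \ref{c-QB} and Lemma \ref{l-QB-quadractic-1}, as at the end of the proof of Theorem \ref{t-B}, delivers $QB \geq 0$ iff $5p+3 \leq 4n$ and $QB > 0$ iff $5p+3 < 4n$. The main obstacle is the careful case-by-case verification that removing the $U_a$ curvature terms never raises any off-diagonal row sum above the lowered Ricci bound $2n-p-1$; this is routine but requires checking every remaining case.
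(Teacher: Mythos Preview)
Your proposal is correct and follows exactly the approach the paper takes: the paper simply remarks that the root structure for $(D_n,\a_p)$ is identical to that of $(B_n,\a_p)$ except that the $U_a$'s are absent, so all the computations carry over with $\Ric=(2n-p-1)g$ in place of $(2n-p)g$, yielding the shifted threshold $5p+3\le 4n$. Your write-up spells out the details more fully than the paper does, but the method is the same.
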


\begin{rem} As in the $B$   cases, one can see that $(D_n,\a_p)$ does not satisfy $B^\perp\ge0$.
\end{rem}

\subsection{The spaces $(C_n, \alpha_p)$}\label{section-C}

We will consider the space $(C_n,\a_p)$, with $n\ge 3$, $1<p<n$. Let $V=\R^n$ and $\ve_i$ be as before.  The root system for $C_n$ is:

  \be\label{e-C-root-1}
\Delta=\{\pm \ve_i\pm \ve_j| 1\le i,j\le n\}.
\ee
Positive roots are:
\be\label{e-root-C-1}
\Delta^+=\{ \ve_i+ \ve_j\}_{i\le j}\cup\{\ve_i-\ve_j\}_{i<j}.
\ee
Simple positive roots are:
\be
\label{e-root-C-2}
\a_1=\ve_1-\ve_2,\a_2=\ve_2-\ve_3,\dots,\a_{n-1}=\ve_{n-1}-\ve_n,\a_n=2\ve_n.
\ee
In terms of the  $\a_i$'s the positive roots are
\be\label{e-C}
\begin{split}
\ve_i-\ve_j=&\a_i+\dots+\a_{j-1}, \ i<j\le n,\\
2\ve_i=&2(\a_i+\dots+\a_{j-1}+ \a_j+\dots+ \a_{n-1})+\a_n, i< n, 2\ve_n=\a_n,\\
\ve_i+\ve_j=&\a_i+\dots+\a_{j-1}+2(\a_j+\dots+
+ \a_{n-1})+\a_n, \ i<j\le n.\\
\end{split}
\ee

Let $1< p < n$.  By \eqref{e-root-C-1} and \eqref{e-C} we have

\be
\label{e-root-4}
\begin{split}
\Delta^+_p(1)=&  \{\ve_a\pm \ve_i| 1\le a\le p, p+1\le i\le n \}
\end{split}
\ee

 \be
\label{e-C-root-5}
\begin{split}
\Delta^+_ p(2)=&\{2\ve_a |  1\le a\le   p\}\cup \{\ve_a +\ve_b|1\le a<b\leq p\}
\end{split}
\ee
 \be
\label{e-C-root-6}
\Delta^+_p(k)=\emptyset,
\ee
for $k\ge 3$. The dimension is $\frac12p(4n-3p+1)$.

As before, $a, b,\dots$ will range from 1 to $p$, $i,j,\dots$ range from $p+1$ to $n$. Let $X_{ai}=\ve_a-\ve_i$, $Y_{ai}=\ve_a+\ve_i$, $U_a=2\ve_a$, $W_{ab}=\ve_a+\ve_b$, $a<b$. Then as in Lemma \ref{l-N-B} and Corollary \ref{l-N-B}, we have the following:
\begin{lma}\label{l-curvature-classical-2}
 \begin{enumerate}
\item Let $\a,\beta$ be positive roots in $\Delta^+_p(k)$, $k=1, 2$.
  $$
  \wt N_{\a,\pm\beta}=
  \left\{
    \begin{array}{ll}
      2\,\sgn(N_{\a,\pm\beta}), & \hbox{if $\{\a,\beta\}=\{X_{ai}, Y_{ai}\}$ for some $a, i$, or}\\ &\hbox {\   one of $\a$, $\beta$ is $U_a$ for some $a$;}\\
      \sqrt 2\,\sgn(N_{\a,\pm\beta}), & \hbox{otherwise.}  \\
    \end{array}
  \right.
  $$
  Here in case of $\a-\beta$, we assume in addition that $\a-\beta\neq0$.
\item  $R(X_{ai}, \b X_{aj}, Y_{ai},\b Y_{aj})=0$  for any $a$ if $i\neq j$, and $R(X_{ai}, \b X_{ci}, Y_{ci},\b Y_{ai})=\pm \frac12$ for any $i$ if $a\neq c$.
      \end{enumerate}
\end{lma}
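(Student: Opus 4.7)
For part (1), I mirror the case-analysis strategy from Lemma \ref{l-N-B} in the $B_n$ setting, using that in $C_n$ every root has squared length $2$ (the short roots $\pm\ve_i\pm\ve_j$, $i\ne j$) or $4$ (the long roots $\pm 2\ve_i$). Among the roots in $\Delta^+_p(1)\cup\Delta^+_p(2)$, the long ones are exactly the $U_a$. By \eqref{wtN}, $|\wt N_{\a,\pm\beta}|=\tfrac{|\a||\beta|}{|\a\pm\beta|}|N_{\a,\pm\beta}|$, so $|\wt N|\in\{\sqrt 2,2\}$ according to whether none or exactly one of $\a,\beta,\a\pm\beta$ is long. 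The two distinguished families in the statement are precisely the ones in which a long root appears in that triple: if $\{\a,\beta\}=\{X_{ai},Y_{ai}\}$, then $\a+\beta=U_a$ and $\a-\beta=\mp 2\ve_i$ are both long, and the chain $\{Y_{ai}-kX_{ai}\}_{k\ge 0}$ has length two, so $|N|=2$ and $\wt N=N=\pm 2$; if one of $\a,\beta$ equals $U_a$, the chain has length one so $|N|=1$, but the length ratio $2\sqrt 2/\sqrt 2=2$ again gives $|\wt N|=2$. In every other subcase all three of $\a,\beta,\a\pm\beta$ are short, the chain has length one, and $|\wt N|=\sqrt 2$. Running through the finite list of type pairs $(X,X),(X,Y),(Y,Y),(X,U),(Y,U),(W,U),(X,W),(Y,W),(W,W)$ finishes part (1).

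For part (2), I apply Lemma \ref{l-curvature-formula-3}(2). With all four roots in $\Delta^+_p(1)$, $i=j=k=l=1$, so $(k-j)\xi_{k-j}=\xi_{i-j}=\xi_{j-i}=0$ and $\delta_{ij}\delta_{kl}=1$, and the formula collapses to
\bee
R(\a,\b\beta,\gamma,\b\delta)=\tfrac14\,\wt N_{\a,\gamma}\,\wt N_{\beta,\delta}+\tfrac12\,\wt N_{\a,-\beta}\,\wt N_{\gamma,-\delta}.
\eee
For $R(X_{ai},\b X_{aj},Y_{ai},\b Y_{aj})$ with $i\ne j$, take $(\a,\beta,\gamma,\delta)=(X_{ai},X_{aj},Y_{ai},Y_{aj})$: then $\a+\gamma=U_a=\beta+\delta$, so by part (1) each factor in the first product has magnitude $2$, giving $\pm 1$; and $\a-\beta=\ve_j-\ve_i$, $\gamma-\delta=\ve_i-\ve_j$ are short, so the second product also has magnitude $2$, giving $\pm 1$. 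The vanishing of $R$ therefore reduces to a sign identity forcing the two summands to be opposite. For $R(X_{ai},\b X_{ci},Y_{ci},\b Y_{ai})$ with $a\ne c$, take $(\a,\beta,\gamma,\delta)=(X_{ai},X_{ci},Y_{ci},Y_{ai})$: now $\a+\gamma=W_{ac}$ is short, so the first summand has magnitude $\tfrac14\cdot 2=\tfrac12$, while the second again has magnitude $1$, and the two must again combine with opposite signs to give $\pm\tfrac12$.

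The sign identities underlying both assertions are instances of the Jacobi identity applied to a triple of Weyl-basis root vectors. For the first, I would apply Jacobi to $(E_{X_{ai}},E_{Y_{ai}},E_{-Y_{aj}})$: each of the three double brackets lies in $\C E_{X_{aj}}$, and reading off coefficients gives a quadratic relation among the structure constants $n_{\cdot,\cdot}$ which, once translated into $\wt N$ via Lemma \ref{l-Weyl basis}, is exactly the cancellation required. An analogous Jacobi argument applied to $(E_{X_{ai}},E_{Y_{ci}},E_{-Y_{ai}})$ handles the second assertion.

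The main obstacle is the sign bookkeeping in part (2): magnitudes follow mechanically from part (1), but forcing the two summands to combine with the correct relative sign requires tracking which of $\a-\beta,\gamma-\delta$ are positive versus negative roots (per the asymmetric rule for $n_{\a,-\beta}$ in Lemma \ref{l-Weyl basis}) together with disciplined use of Jacobi. This is more delicate than the analogous step in the $B_n$ case handled in Corollary \ref{cor-curvature-classical-1}, where one of the two summands automatically vanished; in the $C_n$ setting both summands are simultaneously nonzero and the identity is a genuine cancellation.
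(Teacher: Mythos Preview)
Your approach to part (1) is correct and essentially identical to the paper's: both are a case analysis on root lengths, using that the $U_a$ are the only long roots in $\Delta^+_p(1)\cup\Delta^+_p(2)$ and that $\{X_{ai},Y_{ai}\}$ is the only short pair whose sum and difference are long.

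For part (2) your reduction to a sign identity is correct, and you are right that Jacobi is the underlying mechanism. But you leave the sign determination open, and a direct Jacobi computation on a triple of root vectors produces six structure constants that still have to be related to the four appearing in $R$. The paper uses the same source but packages it more efficiently. It invokes the standard identity (valid whenever $\a-\beta+\gamma-\delta=0$)
\[
\frac{N_{\gamma,\a}N_{-\beta,-\delta}}{|\a+\gamma|^2}
+\frac{N_{\a,-\beta}N_{\gamma,-\delta}}{|\a-\beta|^2}
+\frac{N_{-\beta,\gamma}N_{\a,-\delta}}{|\gamma-\beta|^2}=0,
\]
observes that by part (1) the third term has known magnitude, so the sum of the first two is $\pm\tfrac12$, and then \emph{squares} this relation. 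Since each $N^2$ is known from part (1), the cross term pins down the product $N_{\gamma,\a}N_{-\beta,-\delta}N_{\a,-\beta}N_{\gamma,-\delta}$ exactly (it equals $-4$ for the first assertion). Converting $N_{\gamma,\a}=-N_{\a,\gamma}$ then fixes the relative sign of the two summands in $R$, and the claimed value drops out; the second assertion is handled the same way. This squaring trick is what eliminates the delicate positive-versus-negative-root bookkeeping you flag as the main obstacle, and I would recommend adopting it in place of a raw Jacobi chase.
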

\begin{proof} (1)
 Suppose $\a=X_{ai}$, $\beta=Y_{ai}$, then $\a+\beta$ and $\a-\beta$ are both roots. It is easy to see that $N_{\a,\beta}$ and $N_{\a,-\beta}$ are equal to $\pm2$. Moreover, $|\a|^2=|\beta|^2=2$ and $|\a\pm\beta|^2=4.$ Hence
$ \frac{|\a||\beta|}{|\a\pm\beta|}N_{\a,\pm\beta}=2\sgn(N_{\a,\pm \beta})$. If $\a=U_a$, say, then $U_a+y$ and $U_a-U_b$ are not roots for any $y\in \Delta^+_p(k)$, $k=1,2$. Moreover, if $\beta= X_{bi}$, then $\a-\beta$ is a root if and only if $b=a$. In this case, $N_{\a,-\beta}=\pm 1$, $|\a|^2=4$, $|\beta|^2=2$ and $|\a-\beta|^2=2$. Again $ \frac{|\a||\beta|}{|\a\pm\beta|}N_{\a,\pm\beta}=2\sgn(N_{\a,\pm \beta})$.

If $\a,\beta$ are not as above, and if $\a+\beta$ is a root then $|\a+\beta|^2=2$. In this case, one can see that $N_{\a,\beta}=\pm 1$. So $ \frac{|\a||\beta|}{|\a\pm\beta|}N_{\a,\pm\beta}=\sqrt 2\sgn(N_{\a,\pm \beta})$. The case for $\a-\beta$ is similar.

(2) Let $\a=X_{ai}, \beta=X_{aj}, \gamma= Y_{ai}, \delta= Y_{aj}$.  It is easy to see that
$$R(X_{ai}, \b X_{aj}, Y_{ai},\b Y_{aj})=-\frac12\cdot\frac{|\a||\beta||\gamma||\delta|}{|\a+\gamma|^2}N_{\a,\gamma}N_{-\beta,-\delta}
+ \frac{|\a||\beta||\gamma||\delta|}{|\a-\beta|^2}N_{\a,-\beta}N_{\gamma,-\delta}.
$$
On the other hand, since $\a-\beta+\gamma-\delta=0$,
$$
\frac{N_{\gamma,\a}N_{-\beta,-\delta}}{|\a+\gamma|^2}+
\frac{N_{\a,-\beta}N_{\gamma,-\delta}}{|\a-\beta|^2}
+\frac{N_{-\beta,\gamma}N_{\a,-\delta}}{|\gamma-\beta|^2}=0
$$
By (1), we have
$$
\frac{N_{\gamma,\a}N_{-\beta,-\delta}}4+\frac{N_{\a,-\beta}N_{\gamma,-\delta}}2
=\pm \frac12
$$
because $i\neq j$. Squaring the above equality, noting that $ N_{\gamma,\a}^2=N_{-\beta,-\delta}^2=4$, $ N_{\a,-\beta}^2=N_{\gamma,-\delta}^2=1$, we have
$$
N_{\gamma,\a}N_{-\beta,-\delta}N_{\a,-\beta}N_{\gamma,-\delta}=-4.
$$
Hence
$$
N_{\a,\gamma}N_{-\beta,-\delta}N_{\a,-\beta}N_{\gamma,-\delta}>0
$$
because $N_{\a,\gamma}=-N_{\gamma,\a}$. From this it is easy to see that $R(X_{ai}, \b X_{aj}, Y_{ai},\b Y_{aj})=0$. The other part can be proved similarly.

\end{proof}

To compute the Ricci curvature, we know that $\Ric=\mu g$ and thus
\bee
\begin{split}
\mu=&\sum_{a,i}\lf[R(U_1,\b U_1,X_{ai},\b X_{ai})+R(U_1,\b U_1,Y_{ai},\b Y_{ai})\ri]+\sum_{a}R(U_1,\b U_1,U_a,\b U_a)\\
&+\sum_{a<b}R(U_1,\b U_1,W_{ab},\b W_{ab})\\
=&\frac12\lf(2(n-p)+2(n-p)\ri)+2+ (p-1)\\
=&2n-p+1.
\end{split}
\eee

\begin{lma}\label{l-C-eigenvalue-1} Let $\lambda$ be the largest eigenvalue of the quadratic form $$\sum_{A,B }R_{A\b AB\b B}x_Ax_B$$  in the Weyl frame, where $x_A$ are reals.
\begin{itemize}
  \item [(a)] $\lambda\le 2n-p+1$ if and only if $5p\le 4n+3$.
  \item [(b)] If $5p< 4n+3$, then $\lambda=(2n-p-1)$ iff the corresponding eigenvector has $x_A=x_B$ for all $A,B$.
  \item [(c)] If $5p=4n+3$, then there is an eigenvector with eigenvalue $(2n-p-1)$ such that $x_A\neq x_B$ for some $A\neq B$.
\end{itemize}
\end{lma}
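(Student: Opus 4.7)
The plan is to adapt the argument of Lemma \ref{l-B-eigenvalue-1} to the $C_n$ setting, using Lemma \ref{l-curvature-formula-2} and Lemma \ref{l-curvature-classical-2} to compute the curvature coefficients. Let $v=(x_A)$ be an eigenvector with eigenvalue $\lambda$, normalized so $\max_A|x_A|=1$, and write its components as $x_{ai}, y_{ai}, u_a, t_{ab}$ in the notation from $\Delta^+_p(1)\cup\Delta^+_p(2)$. First I would verify the symmetry $R(X_{ai},X_{bj})=R(Y_{ai},Y_{bj})$ and $R(X_{ai},Y_{bj})=R(Y_{ai},X_{bj})$, as well as $R(X_{ai},U_b)=R(Y_{ai},U_b)$ and $R(X_{ai},W_{cd})=R(Y_{ai},W_{cd})$, by checking root-sum conditions and applying Lemma \ref{l-curvature-classical-2}. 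Writing $w$ for the vector obtained from $v$ by swapping $x_{ai}\leftrightarrow y_{ai}$, both $v+w$ and $v-w$ are eigenvectors with eigenvalue $\lambda$, so I may assume $v$ is either symmetric ($x_{ai}=y_{ai}$) or antisymmetric ($x_{ai}=-y_{ai}$ with $u_a=t_{ab}=0$).

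The proof of (a) then splits into three row-sum cases. If the max is attained by $|u_a|$ or $|t_{cd}|$, then since $(U_a,\beta)\ge 0$ and $(W_{cd},\beta)\ge 0$ for every $\beta\in\Delta^+_p(1)\cup\Delta^+_p(2)$, Lemma \ref{l-curvature-formula-2} and Lemma \ref{l-curvature-classical-2} give all row coefficients $\ge 0$, and they sum to $\mu=2n-p+1$, hence $|\lambda|\le 2n-p+1$. If the max is $|x_{ai}|$ and $v$ is symmetric, the eigenvalue equation for $x_{ai}$ recombines $R(X_{ai},X_{bj})x_{bj}+R(X_{ai},Y_{bj})y_{bj}$ into $(R(X_{ai},X_{bj})+R(X_{ai},Y_{bj}))x_{bj}$; since $(X_{ai},X_{bj}+Y_{bj})=2\delta_{ab}\ge 0$ and the Chern-type terms are $\ge 0$, each symmetrized coefficient is $\ge 0$, and the total is again $\mu=2n-p+1$. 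Finally, in the antisymmetric case the eigenvalue equation for $x_{ai}$ becomes $\lambda x_{ai}=\sum_{b,j}(R(X_{ai},X_{bj})-R(X_{ai},Y_{bj}))x_{bj}$; a direct case analysis (noting that $X_{ai}+X_{bj}$ is never a root in $C_n$, while $X_{ai}+Y_{bj}$ is a root precisely when $j=i$, yielding $U_a$ if $a=b$ and $W_{ab}$ otherwise) gives the coefficient as $1$ for $(b,j)=(a,i)$, $0$ for $b=a,j\neq i$, $3/2$ for $b\neq a,j=i$, and $0$ otherwise, all nonnegative, with total $\frac{3p-1}{2}$. Hence $|\lambda|\le\tfrac{3p-1}{2}$, and $\tfrac{3p-1}{2}\le 2n-p+1\iff 5p\le 4n+3$. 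Combined with the fact that $\mu=2n-p+1$ is always realized by the constant eigenvector, this proves the ``if'' of (a); for the ``only if'', if $5p>4n+3$ then the test vector $x_{ai}=-y_{ai}=1$, $u_a=t_{ab}=0$ gives $P(v)=p(n-p)(3p-1)$ while $|v|^2=2p(n-p)$, so $P(v)/|v|^2=\tfrac{3p-1}{2}>2n-p+1$, producing $\lambda>2n-p+1$.

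For (b), assuming $5p<4n+3$, the antisymmetric subspace has top eigenvalue strictly less than $2n-p+1$, so any eigenvector for the Ricci eigenvalue $\mu=2n-p+1$ lies in the symmetric subspace; the strict positivity of the diagonal symmetrized coefficients then forces $x_A=x_{\max}$ throughout by the equality clause of the row-sum argument, giving constancy. For (c), when $5p=4n+3$ the antisymmetric test vector above satisfies $P(v)=(2n-p+1)|v|^2$ exactly, providing an eigenvector with eigenvalue $2n-p+1$ whose components are not all equal. (I note the statement of (b) as printed reads $\lambda=2n-p-1$, but in view of the Ricci computation and consistency with (a), this should read $\lambda=2n-p+1$; the argument above proves the latter.)

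The main obstacle is the careful bookkeeping for the antisymmetric coefficients, in particular correctly accounting for the enhancement $\tilde N=2\,\mathrm{sgn}$ (versus $\sqrt 2\,\mathrm{sgn}$) from Lemma \ref{l-curvature-classical-2} when $\{\alpha,\beta\}=\{X_{ai},Y_{ai}\}$ or involves $U_a$; this is what produces the distinctive sum $\tfrac{3p-1}{2}$ for $C_n$ in place of $\tfrac{3p+1}{2}$ for $B_n$, and ultimately yields the sharp threshold $5p\le 4n+3$ here versus $5p+1\le 4n$ before.
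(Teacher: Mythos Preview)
Your proposal is correct and follows essentially the same approach as the paper: the paper's proof of this lemma consists of two sentences pointing back to the $B_n$ argument (Lemma~\ref{l-B-eigenvalue-1}) and recording exactly the two changes you compute, namely that the Ricci row-sum becomes $2n-p+1$ and that the antisymmetric row-sum becomes $\tfrac{3}{2}p-\tfrac{1}{2}$ rather than $\tfrac{3}{2}p+\tfrac{1}{2}$. Your case analysis for the antisymmetric coefficients (correctly picking up the enhanced $\wt N^2=4$ at $\{X_{ai},Y_{ai}\}$ from Lemma~\ref{l-curvature-classical-2}) and your observation that the printed $2n-p-1$ in (b),(c) should read $2n-p+1$ are both accurate.
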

\begin{proof}
Part $(a)$: the argument is identical to the proof of Lemma \ref{l-B-eigenvalue-1} (a) except that: in Case 1 we use that for any $B$ the coefficients in $\sum_{A}R(A,B)x_A$ must add to $2n-p+1$ (instead of $2n-p$), in Case 2 we use that $$\sum_{b,j}\lf(R(X_{ai},X_{bj})-R(X_{ai}, Y_{bj})\ri)=\frac32 p -\frac12$$ (instead of $\frac32 p +\frac12$).

 Parts  (b) and (c): the argument is similar to the corresponding proofs for Lemma \ref{l-B-eigenvalue-1}.
\end{proof}
\begin{lma}\label{l-C-eigenvalue-2}
 Let $\lambda$ be the largest eigenvalue of the quadratic form \begin{equation}\label{SABC}\sum_{A,B,C,D; A\neq B, C\neq D}R_{A\b B C\b D}x_{A  B}x_{C  D}\ee in the Weyl frame, where $x_{A  B}=\ol{x_{B  A}} $.
\begin{itemize}
    \item [(a)]
  $\lambda\le  2n-p+1 $ if and only if $5p\le 4n+3$.
    \item [(b)] If $5p< 4n+3$, then $\lambda< 2n-p+1 $.
  \end{itemize}
\end{lma}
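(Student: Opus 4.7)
The proof parallels the $B_n$ analysis carried out in Lemma \ref{l-B-eigenvalue-2}. I would apply the row-sum estimate of Lemma \ref{l-eigenvalue} to the Hermitian matrix of the form \eqref{SABC}, so that the largest eigenvalue $\lambda$ is bounded by $\max_{(A,B)} S_{AB}$, where
\[
S_{AB} = \sum_{C\neq D} \lvert R(A,\bar B, C, \bar D)\rvert
\]
and $(A,B)$ runs over pairs of distinct roots in the Weyl frame. It thus suffices to establish $S_{AB}\le 2n-p+1$ (and strict inequality when $5p<4n+3$) uniformly in $(A,B)$.

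I would enumerate the cases exactly as in Lemma \ref{l-B-eigenvalue-2}, grouped by the types of $A$ and $B$ chosen from $\{X_{ai},Y_{ai},U_a,W_{bc}\}$: both in $\Delta^+_p(1)$ (the $XX$, $XY$, $YY$ subcases), mixed grade ($XU$, $XW$, $YU$, $YW$), and both in $\Delta^+_p(2)$ ($UU$, $UW$, $WW$). In each case the conditions (C1) $[A-B\neq x-y\Rightarrow R=0]$ and (C2) [neither $A-B$ nor $A+y$ a root $\Rightarrow R=0$] eliminate all but a short list of quadruples, and the remaining magnitudes are computed from Lemma \ref{l-curvature-formula-3} combined with the $\widetilde N$-values of Lemma \ref{l-curvature-classical-2}(1): $\widetilde N=\pm 2$ whenever a $U_a$ or a pair $\{X_{ai},Y_{ai}\}$ is involved, and $\widetilde N=\pm\sqrt{2}$ otherwise.

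The critical case, which should determine the threshold, is $A=X_{ai}$, $B=X_{aj}$ with $i\neq j$ (and the symmetric $A=Y_{ai}$, $B=Y_{aj}$). Here the surviving $X$-quadruples $(X_{cj},X_{ci})$ for $c\le p$ contribute a total of $p$, and the surviving $Y$-quadruples $(Y_{ci},Y_{cj})$ for $c\le p$ contribute $\tfrac12(p-1)$; crucially, the $c=a$ term in the $Y$-sum vanishes by Lemma \ref{l-curvature-classical-2}(2), removing what would otherwise be a contribution of size $1$. This yields $S_{AB}=\tfrac{3p-1}{2}$, which matches the threshold $\tfrac{3p-1}{2}\le 2n-p+1\Leftrightarrow 5p\le 4n+3$.

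The main obstacle will be the bookkeeping in the cases involving $U_a$: since $U_a\in\Delta^+_p(2)$, the sums have a different structure than in $B_n$, and the enhanced factor $\widetilde N=\pm 2$ amplifies individual terms. Fortunately $U_a+y$ is never a root for any $y\in\Delta^+_p(k)$ with $k\ge 1$, so (C2) is highly restrictive in these cases, and with careful counting one finds that $S_{AB}<2n-p+1$ holds strictly in each of them. Thus the $XX$ and $YY$ cases govern the overall threshold, and part (b) follows automatically, since the critical inequality is strict exactly when $5p<4n+3$.
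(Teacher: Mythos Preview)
Your proposal is correct and follows essentially the same approach as the paper: apply the row-sum bound of Lemma~\ref{l-eigenvalue} to $S_{AB}=\sum_{C\neq D}|R(A,\bar B,C,\bar D)|$, run through the same case list by root type, and reduce each sum via (C1)/(C2) together with the $\widetilde N$-values of Lemma~\ref{l-curvature-classical-2}. You correctly locate the governing case $A=X_{ai},\,B=X_{aj}$ (and its $Y$-analogue), obtain $S_{AB}=\tfrac{3p-1}{2}$, and pinpoint Lemma~\ref{l-curvature-classical-2}(2) as the reason the $c=a$ term in the $Y$-sum drops out---this is exactly the mechanism the paper uses. One small addendum: the mixed case $A=X_{ai},\,B=Y_{aj}$ with $i\neq j$ also yields $S_{AB}=\tfrac{3p-1}{2}$, so it sits on the same threshold; this does not change your conclusion, but you should not claim that all cases other than $XX$/$YY$ are \emph{strictly} below $2n-p+1$.
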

\begin{proof}
We want to estimate \be\label{SABC} S_{AB}=\sum_{x\neq y}|R_{A\b B y\b x}|\ee for each case of $A,B$.  Note that $S_{AB}=S_{BA}$. Recall the following properties
\begin{enumerate}
\item[(C1)]  If $A-B \neq x-y$ then $R_{A\b B y\b x}=0$.
\item[(C2)] If neither $A-B$ nor $A+y$ are roots then  $R_{A\b B y\b x}=0$.
\end{enumerate}
\vspace{12pt}

In each case we will use these to reduce the terms in \eqref{SABC} as much as possible.  Then Lemmas \ref{l-curvature-formula-3} and \ref{l-curvature-classical-2} will be used to calculate the absolute values of the remaining curvature terms.

CASE (i) $A=X_{ai}, B=X_{bj}$ with $(a,i)\neq (b,j)$.

Note that $A,B\in \Delta^+_p(1)$.  By  (C1) we may assume that $x,y\in\Delta^+_p(1)$ or $x,y\in\Delta^+_p(2)$.   Note that the sum of the coordinates of each $X$  is 0,  the sum of the coordinates of each $Y$  is 2,  the sum of the coordinates of each $U$  is 2 and  sum of the coordinates of each  $W$ is 2. Thus by  (C1),  \eqref{SABC}  reduces to:

\be\label{SABCi} \begin{split}S_{AB}=&\sum_{c,k,d,l}|R(X_{ai}, \b X_{bj}, X_{c k},\b X_{d l})|+\sum_{c,k,d,l}|R(X_{ai}, \b X_{bj}, Y_{c k},\b Y_{d l})|\\&+\sum_{c,d,e,f}|R(X_{ai}, \b X_{bj}, W_{c d}, \b W_{e f})|\\&+\sum_{c,d,e}|R(X_{ai}, \b X_{bj}, U_c,W_{de})|+\sum_{c,d,e}|R(X_{ai}, \b X_{bj}, W_{de}, U_c)|\\=:&I+II+III+IV+V
\end{split}
\ee
\vspace{12pt}

If $a\neq b$, $i\neq j$ then: All terms in $III, IV, V$ are zero by (C1).  By (C1), the only non-zero term in $I$ is $|R(X_{ai}, \b X_{bj}, X_{bj},\b X_{ai})|$ leaving $I=\frac14$.  We get $II=\frac14$ in the same way.   Hence $S_{AB}=p+0+\frac{1}{2}(p-1)=\frac14<2n-p+1$.
\vspace{12pt}

If $a=b$, $i\neq j$ then: All terms in $III, IV, V$ are zero by (C1).   By (C1), the only non-zero term in $I$ is $|R(X_{ai}, \b X_{aj}, X_{cj},\b X_{ci})|$ for any $c$, leaving $I=p$.  By (C1), the only non-zero term in $II$ is $|R(X_{ai}, \b X_{aj}, Y_{cj},\b Y_{ci})|$ which is $0$ by Lemma \ref{l-curvature-classical-2} if $a=c$ leaving $II=\frac12 (p-1)$.  Hence $S_{AB}=p+0+\frac{1}{2}(p-1)=3p/2 - 1/2 \leq 2n-p+1$ if and only if $5p\le 4n+3$, and $S_{AB}< 2n-p+1$ if and only if $5p< 4n+3$.
\vspace{12pt}

If $a\neq b$, $i= j$, we may assume that $a<b$, then: By (C1), the only non-zero terms in $I$ are $|R(X_{ai}, \b X_{bi}, X_{bk},\b X_{ak})|$ for any $k$, leaving $I=n-p$. By (C1), the only non-zero terms in $II$ are $|R(X_{ai}, \b X_{bi}, Y_{bk},\b Y_{ak})|$ for any $k$, leaving a contribution of $1/2$ when $k=i$ by Lemma \ref{l-curvature-classical-2} and a contribution of $n-p-1$ for the cases when $k\neq i$.   By (C1), the only non-zero terms in $III$ are $|R(X_{ai}, \b X_{bi}, W_{bc},\b W_{ac})|$ for $c>b$,  or $|R(X_{ai}, \b X_{bi}, W_{cb},\b W_{ca})|$ for $c<a$, or $|R(X_{ai}, \b X_{bi}, W_{bc},\b W_{ca})|$ for $a<c<b$ in which cases the respective contributions to $III$ are $1/2(p-b), 1/2(a-1), 1/2(b-a-1)$.    By (C1), the only non-zero term in $IV$  is $|R(X_{ai}, \b X_{bi}, U_b,W_{ab})|$ leaving $IV=\sqrt{2}/2$.   By (C1), the only non-zero term in $V$  is $|R(X_{ai}, \b X_{bi}, W_{ab}, U_a)|$ leaving $V=\sqrt{2}/2$. Hence $S_{AB}= (n-p)+   \frac12 +(n-p-1) +\sqrt2+\frac12\lf[(p-b)+ (a-1)+ (b-a-1)\ri]=2n-\frac32p-\frac12+\sqrt 2<2n-p+1.$

\vspace{12pt}

CASE (ii) $A=X_{ai}$, $Y_{bj}$.

Note that $A,B\in \Delta^+_p(1)$.  By (C1), $x,y\in\Delta^+_p(1)$ or $x,y\in\Delta^+_p(2)$ and \eqref{SABC}  reduces to:

\be\label{SABCii} \begin{split}S_{AB}=&\sum_{c,k,d,l}|R(X_{ai}, \b Y_{bj}, Y_{c k},\b X_{d l})|
\end{split}
\ee

If $a\neq b$, $i\neq j$:  then by (C1), the only possible non-zero terms in \eqref{SABCii} are $|R(X_{ai}, \b Y_{bj}, Y_{bj},\b X_{ai})|$, which is zero by (C2) and $|R(X_{ai}, \b Y_{bj}, Y_{bi},\b X_{aj})|$ which is $\frac12$.  Hence
$S_{AB}=\frac12<2n-p+1$.
\vspace{12pt}

If $a=b$, $i\neq j$: then by (C1), the only non-zero terms in \eqref{SABCii} are $|R(X_{ai}, \b Y_{aj}, Y_{cj},\b X_{ci})|$ or $|R(X_{ai}, \b Y_{aj}, Y_{ci},\b X_{cj})|$,  for any $c$.  In the first case the contribution to  \eqref{SABCii} is $p$ and in the second case the contribution to  \eqref{SABCii} is $0$ when $c=a$ and $\frac12$ when $c\neq a$.  Thus  $S_{AB}=p+\frac12 (p-1)=\frac32 p-\frac12 <2n-p+1$
\vspace{12pt}

If $a\neq b$, $i= j$ then by (C1), the only non-zero term in \eqref{SABCii} is given by  $|R(X_{ai}, \b Y_{bi}, Y_{bi},\b X_{bi})|=1/2$.  Thus  $S_{AB}=\frac12<2n-p+1$.
\vspace{12pt}

If $a=b$, $i= j$ then by (C1), the only non-zero terms in \eqref{SABCii} are $|R(X_{ai}, \b Y_{ai}, Y_{ci},\b X_{ci})|$ and the contribution to \eqref{SABCii} is $1$ when $c=a$ and $\frac12(p-1)$ from the cases $c\neq a$ by Lemma   \ref{l-curvature-classical-2}.  Thus $S_{AB}=1+ \frac12(p-1)<2n-p+1$.
\vspace{12pt}

CASE (iii) $A=X_{ai}, B=U_b$.

Note that $A\in \Delta^+_p(1)$ and $B\in \Delta^+_p(2)$.  By  (C1), $x\in \Delta^+_p(1)$ and $x\in \Delta^+_p(2)$ and \eqref{SABC}  reduces to:

\be\label{SABiii} \begin{split}S_{AB}=& \sum_{c,d,j}|R(X_{a i}, \b U_b, U_c,\b X_{dj})| + \sum_{c,d,e,j}|R(X_{a i}, \b U_b, W_{cd},\b X_{ej})|=:I+II
\end{split}
\ee
\vspace{12pt}

If $a\neq b$ then:  All terms in $II$ are zero by (C1).   By (C1), the only possible non-zero term in $I$ is $|R(X_{a i}, \b U_b, U_b,\b X_{ai})|$  which in turn is zero by (C2).  \vspace{12pt}

If $a=b$ then:  By (C1), the only non-zero terms in $I$ is $|R(X_{a i}, \b U_a, U_a,\b X_{ai})|$  leaving $I=1$.  By (C1), the only non-zero terms in $II$ are $|R(X_{a i}, \b U_a, W_{ac},\b X_{ci})|$ for $c>a$, and $|R(X_{a i}, \b U_a, W_{ca},\b X_{ci})|$ for $c<a$ in which cases the respective contributions to $II$ are $\frac{\sqrt{2}}{2}(p-a), \frac{\sqrt{2}}{2}(a-1)$.  Thus $S_{AB}=1+\frac{\sqrt 2}2\lf((p-a)+(a-1)\ri)=1+\frac{\sqrt 2}2(p-1)<2n-p+1$.
\vspace{12pt}

CASE (iv) $A=X_{ai}, B=W_{bc}$.

Note that $A\in \Delta^+_p(1), B\in \Delta^+_p(2)$.  By  (C1), $x\in\Delta^+_p(1)$ and $y\in\Delta^+_p(2)$ and  \eqref{SABC}  reduces to:

\bee\label{SABCiv}
\begin{split}
S_{AB}=&  \sum_{d,j,e,f} |R(X_{a i}, \b W_{bc}, W_{ef},\b X_{dj})|+ \sum_{d,j,e} |R(X_{a i}, \b W_{bc}, U_e,\b X_{dj})|:=I+II \\
\end{split}
\eee

If $a=b$ then by (C1), the only non-zero terms in  $I$ are  $ |R(X_{a i}, \b W_{ac}, W_{dc},\b X_{di})|$ for $d<c$, or $ |R(X_{a i}, \b W_{ac}, W_{cf},\b X_{fi})|$ for $f>c$.  In the first case the contribution is $\frac12 (c-1)$ and the contribution in the second case is $\frac12 (p-c)$.    By (C1), the only non-zero term in $II$ is   $ |R(X_{a i}, \b W_{ac}, U_c,\b X_{ci})|$ which is $\sqrt{2}$.  Thus $S_{AB}=\frac12 (p-1)+\sqrt{2}<2n-p+1$.
\vspace{12pt}

If $a\neq b$ then by (C1) and (C2) the terms in $I, II$ are zero unless $a=c$ in which case we get as above, that  $S_{AB}=\frac12 (p-1)+\sqrt{2}<2n-p+1$.
\vspace{12pt}

CASE (v) $A=Y_{ai}, B=Y_{bj}$, $(a,i)\neq (b,j)$. Similar to (i)

CASE (vi) $A=Y_{ai}, B=U_{b}$. Similar to (iii).

CASE (vii) $A=Y_{ai}, B=W_{bc}$. Similar to (iv).

CASE (viii) $A=U_a$, $B=U_b$, $a<b$.

From (C1) it is not hard to see here that \eqref{SAB}  reduces to:
\vspace{12pt}

\bee
\begin{split}
S_{AB}=&\sum_{c,d} |R(U_a,\b U_b, U_b,\b U_a)|=0\\
\end{split}
\eee
Where the last equality follows by (C2).
\vspace{12pt}

CASE (ix) $A=U_a$, $B=W_{bc}$, $b<c$.
\vspace{12pt}

Note that $A, B\in \Delta^+_p(2)$.  By  (C1), $x, y\in \Delta^+_p(1)$ or $A, B\in \Delta^+_p(2)$ and  \eqref{SABC}  reduces to:

\bee
\begin{split}
S_{AB}=&  \sum_{d,e,i} |R(U_a,\b W_{bc}, X_{di},\b X_{ei})| +\sum_{d,e,i} |R(U_a,\b W_{bc}, Y_{di},\b Y_{ei})|\\
&+\sum_{d,e,f,g} |R(U_a,\b W_{bc}, W_{de},\b W_{fg})|+\sum_{d,e,f} |R(U_a,\b W_{bc}, U_d,\b W_{ef})|\\&+\sum_{d,e,f} |R(U_a,\b W_{bc}, W_{de},\b U_f)|\\
=&I+II+III+IV+V\\
\end{split}
\eee

Note that $A+x$ is never a root  and thus by (C2), the only non-zero terms are when $a=b$ or $a=c$.  If $a=b$ then by (C1) the only non-zero terms in $I$ are $ |R(U_a,\b W_{ac}, X_{ci},\b X_{ai})|$ for any $i$, leaving $I=\frac{\sqrt{2}}{2} (n-p)$.  Similarly, we get $II=\frac{\sqrt{2}}{2} (n-p)$.  By (C1) the only non-zero terms in $III$ are $|R(U_a,\b W_{ac}, W_{ce},\b W_{ae})|$ for $e>c$, and $|R(U_a,\b W_{ac}, W_{dc},\b W_{da})|$ for $d<a$, and $|R(U_a,\b W_{ac}, W_{dc},\b W_{ad})|$ for $a<d<a$ in which cases the respective contributions to $III$ are $\frac{\sqrt{2}}{2}(p-c), \frac{\sqrt{2}}{2}(a-1), \frac{\sqrt{2}}{2}(c-a-1)$.  By (C1) the only non-zero term in $IV$ is  $|R(U_a,\b W_{ac}, U_c,\b W_{ac})|$ leaving  $IV=1$.  By (C1) the only non-zero term in $V$ is  $|R(U_a,\b W_{ac}, W_{ac},\b U_a)|$ leaving  $V=1$.
Thus when $a=b$, and similarly when $a=c$, $S_{AB}=\sqrt2(n-p)+\frac{\sqrt 2}2(p-2)+2<2n-p+1$.

CASE (x) $A=W_{ab}, B=W_{cd}$, $(a,b)\neq (c,d)$. We may assume that $a\le c$.

Note that $A,B \in \Delta^+_p(2)$.  By  (C1), $x,y\in\Delta^+_p(1)$ or $x,y\in\Delta^+_p(2)$ and  \eqref{SABix}  reduces to:

\bee
\begin{split}
S_{AB}=&  \sum_{e,f,i} |R(W{ab},\b W_{cd}, X_{ei},\b X_{fi})| +\sum_{e,f,i} |R(W{ab},\b W_{cd}, Y_{ei},\b Y_{fi})|\\
&+\sum_{e,f,g,h} |R(W{ab},\b W_{cd}, W_{ef},\b W_{gh})|+\sum_{e,f,g} |R(W{ab},\b W_{cd}, W_{ef},\b U_g)|\\&+\sum_{e,f,g} |R(W{ab},\b W_{cd}, U_e,\b W_{fg})|\\
=&I+II+III+IV+V\\
\end{split}
\eee

Note that $A+y$ is never a root for any positive root $y$.  Thus by (C2), all terms in $I, II, III, IV, V$ are zero unless either $a=c$ or $b=c$ or $b=d$. Assume that $a=c$, and without loss of generality that $b<d$. By (C1), the only non-zero terms in $I$ are  $|R(W_{ab},\b W_{ad}, X_{di},\b X_{bi})|$  leaving $I=\frac12 (n-p)$.   We similarly get $II=\frac12 (n-p)$. By (C1), the only non-zero terms in $III$ are $|R(W_{ab},\b W_{ad}, W_{de},\b W_{be })|$ for $e>d$, or $|R(W_{ab},\b W_{ad}, W_{ed},\b W_{ eb})|$ for $e<b$, or $|R(W_{ab},\b W_{ad}, W_{ed},\b W_{be })|$ for $b<e<d$, in which cases the respective contributions to $III$ are $1/2(p-d), 1/2(b-1), 1/2(d-b-1)$.  By (C1) the only non-zero term in $IV$ is  $|R(U_a,\b W_{ac}, W_{bd},\b U_b)|$ leaving $V=\frac{\sqrt{2}}{2}$.  Similarly, we get $V=\frac{\sqrt{2}}{2}$.  Thus when $a=c$, and similarly when $b=c$ or $b=d$, $S_{AB}=(n-p)+\frac12(n-p)+\frac12\lf[(p-d)+(b-1)+(d-b-1)
 \ri]+\sqrt 2=n-1-\frac12 p +\sqrt 2<2n-p+1$

This completes the proof of the Lemma.
\end{proof}
By  Lemmas \ref{l-C-eigenvalue-1} and \ref{l-C-eigenvalue-2}, we can proceed as in the $B$ cases to obtain:

\begin{thm}\label{t-C} The \K $C$-space $(C_n,\a_p)$, $n\ge 3$, $1<p<n$ satisfies $QB\ge0$ if and only if $5p\le 4n+3$.  Moreover, $QB>0$ if and only if $5p< 4n+3$.
 \end{thm}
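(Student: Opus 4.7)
The plan is to follow the same pattern used to prove Theorem \ref{t-B} in the $B_n$ case, simply substituting in the corresponding eigenvalue lemmas that have already been established for $C_n$. Specifically, by Corollary \ref{c-QB} the condition $QB\geq 0$ on the \K $C$-space $(C_n,\a_p)$ is equivalent to the requirement that the largest eigenvalue of the real quadratic form $\sum_{A,B} R_{A\b A B\b B} x_A x_B$ and the largest eigenvalue of the Hermitian form $\sum_{A\ne B,\, C\ne D} R_{A\b B C\b D} x_{AB} x_{CD}$, both computed in the Weyl frame, are at most the Einstein constant $\mu = 2n-p+1$.

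First, I would invoke part (a) of Lemma \ref{l-C-eigenvalue-1} to see that the first eigenvalue is $\le 2n-p+1$ precisely when $5p \le 4n+3$, and then part (a) of Lemma \ref{l-C-eigenvalue-2} to obtain the same conclusion for the second eigenvalue under the same arithmetic condition. Combined with Corollary \ref{c-QB}, this yields the ``if'' direction of the first statement. Conversely, if $5p > 4n+3$, then the proof of Lemma \ref{l-C-eigenvalue-1}(a) (in its Case 2 analogue) produces an explicit test vector whose Rayleigh quotient exceeds $2n-p+1$, so $QB \ge 0$ fails; this gives the ``only if'' direction.

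For the $QB>0$ statement, I would appeal to Lemma \ref{l-QB-quadractic-1}(b), which asserts that $QB>0$ holds iff $G-F>0$ on $\Omega^{1,1}_{\R}(p)\setminus \R\omega(p)$. Since $(C_n,\a_p)$ is \K--Einstein with $\Ric = (2n-p+1)g$, the operator $G$ acts as multiplication by $2n-p+1$ on the orthogonal complement of $\R\omega(p)$ inside $\Omega^{1,1}_{\R}(p)$. Thus $G-F>0$ on this complement is equivalent to showing that the largest eigenvalues of the two quadratic forms above are strictly less than $2n-p+1$, except on the span of $\omega$ itself. By parts (b) and (c) of Lemma \ref{l-C-eigenvalue-1} together with part (b) of Lemma \ref{l-C-eigenvalue-2}, this strict inequality holds exactly when $5p<4n+3$: in the boundary case $5p=4n+3$ part (c) produces an eigenvector not proportional to the diagonal of $\omega$ that still achieves the eigenvalue $2n-p+1$, so $QB>0$ fails, while when $5p<4n+3$ both forms have largest eigenvalue strictly below $2n-p+1$ on the relevant subspace.

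Since all of the heavy lifting has been packaged into Lemmas \ref{l-C-eigenvalue-1} and \ref{l-C-eigenvalue-2}, the only step in the theorem proper is to assemble these pieces with Corollary \ref{c-QB} and Lemma \ref{l-QB-quadractic-1}, mirroring verbatim the proof of Theorem \ref{t-B}. The main conceptual point, already verified in the $B$ case, is that the Einstein property forces $G$ to be scalar on the orthogonal complement of $\R\omega$, so that the sharp thresholds $5p \le 4n+3$ and $5p < 4n+3$ translate directly from the eigenvalue bounds to the curvature positivity statements. No further computation is needed.
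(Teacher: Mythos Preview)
Your proposal is correct and follows essentially the same approach as the paper: the paper's own proof of Theorem~\ref{t-C} consists of the single sentence ``By Lemmas~\ref{l-C-eigenvalue-1} and~\ref{l-C-eigenvalue-2}, we can proceed as in the $B$ cases,'' and you have accurately unpacked what that means by mirroring the argument of Theorem~\ref{t-B} with the Einstein constant $2n-p+1$ and the threshold $5p\le 4n+3$ substituted in.
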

\begin{rem} As in the $B$   cases, one can see that $(C_n,\a_p)$ does not satisfy $B^\perp\ge0$.
\end{rem}

\section{\K $C$-spaces of exceptional type}\label{section-exceptional}

For each of the exceptional Lie algebras $G_2, F_4, E_6, E_7, E_8$, we will establish whether or not the corresponding \K C-spaces with $b_2=1$ have $QB\geq 0$ or not. For each case, we define the following quadratic forms with respect to the Weyl frames:
$$M_1:=\sum_{A,B}R_{A\b AB\b B}x_Ax_B$$
$$M_2:=\sum_{{A,B,C,D;}\atop{  A\neq B, C\neq D}}  R_{A\b B C\b D}x_{AB}x_{CD},$$   where the $x_A$'s are real and $\ol{x_{AB}}=x_{BA}$. We will study the largest eigenvalues of these two quadratic forms. By Lemma \ref{c-QB}, these will tell us whether the space satisfies $QB\ge0$, or $QB>0$.

For each exceptional Lie algebra $\mathfrak{g}$, we will first present an explicit root system (in some Euclidean space $\R^n$) and fundamental set of roots $\{\alpha_1,..\}$.   Then for each corresponding \K $C$-space $(\mathfrak{g}, \alpha_k)$ we present a Weyl frame.   Lemma \ref{l-curvature-formula-2}  then allows explicit calculation of the matrix for $M_1$.  The main point here is to determine $\wt N_{\a,\pm\beta}$.   From this point, while eigenvalue estimates are possible by row sum and symmetry arguments, as in $M_1$ in the classical cases, we compute the eigenvalues and Ricci curvature (row sum) of $M_1$ directly using MAPLE.

 To estimate the largest eigenvalue of $M_2$, we will use Lemmas \ref{l-curvature-formula-2} and \ref{l-curvature-formula-3} to compute the curvature tensor. However, in this case the lemmas allow only an upper estimate for the absolute value of the entries of $M_2$,  since we can only calculate the $N_{\alpha\beta}$'s appearing there up to a sign.  By the same reason, this can  only be estimated from above by $|R_1|+|R_2|$ in  the formula Lemma \ref{l-curvature-formula-3}(2).  In some cases, this becomes too large, and we cannot get a good estimate. Hence, for $A, B, C, D$ corresponding to positive roots $\a\in \Delta^+(i),\beta\in \Delta^+(j),\gamma\in \Delta^+(k),\delta\in \Delta^+(l)$ where $A\neq B$, define  $\wt R(A,\b B, C,\b D)$ as follows:
 \vspace{20pt}

\be\label{eq-tR}
\wt R(A,\b B, C,\b D)=\left\{
  \begin{array}{ll}
    0, & \hbox{if $\a-\beta\neq\delta-\gamma$;}\\|R(A,\b A, B,\b B)|, &\hbox{if $B=C$ (i.e. if $\beta=\gamma$);}\\
m, & \hbox{if $\a-\beta=\delta-\gamma$, and $\beta-\gamma\neq0$,}
  \end{array}
\right.
\ee
 where $m=$

 $\min\{|R_1(A,\b B, C,\b D)|+|R_2(A,\b B, C,\b D)|,
|R_1(C,\b B, A,\b D)|+|R_2(C,\b B, A,\b D))|\}$.

 \vspace{20pt}

Note that in the last case, we also have $\a-\delta\neq0$.  Since $R(A,\b B, C,\b D)=R(C,\b B, A,\b D)$ by symmetries of the curvature tensor, we have $|R(A,\b B, C,\b D)|\le  \wt  R(A,\b B, C,\b D),$ for all $A\neq B, C\neq D$.

\begin{rem} In many cases, $|R(A,\b B, C,\b D)|$ is exactly equal to the quantity $m$ in \eqref{eq-tR}. For example, this is the case if one of $\a+\gamma, \a-\beta, \beta-\gamma$ is not a root.
\end{rem}
Consider the following matrix $Z_{AB, CD}$, which is defined for all pairs $AB, CD$:

 \be\label{Z}
Z_{AB, CD} = \left\{
   \begin{array}{ll}
    0, & \hbox{if $A=B$ or $C=D$;} \\
     \wt R(A,\b B, C,\b D), & \hbox{otherwise;} \\
     \end{array}
 \right.
\ee
 Recall that $(M_2)_{AB, CD}=R(A, \b B, C \b D)$ is only defined for pairs with $A\neq B$, $C\neq D$.  For any $A$ the $AA$th row
and column of $Z$ has zero in every entry, and removing these rows and columns leaves a symmetric matrix with the same dimension as $M_2$, bounding $M_2$ from above, entry-wise in absolute values.  The following simple Lemma justifies estimating the largest eigenvalue of $M_2$ by the largest absolute eigenvalue of $Z$

\begin{lma}\label{eigenvalueZ} Let $N,M$ be a real symmetric $n\times n$ matrices such that
  $N_{ij}\geq |M_{ij}|$ for all $i,j.$
Then spectral radius (the maximal absolute value of eigenvalues) $\lambda_N$ of $N$ is greater than or equal to the spectral radius $\lambda_M$ of $M$.
\end{lma}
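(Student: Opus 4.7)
The plan is to combine the Rayleigh-quotient characterization of the spectral radius for real symmetric matrices with a simple triangle-inequality comparison, exploiting that $N$ has non-negative entries.

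First I would record that $N$ is a symmetric non-negative matrix, since $N_{ij}\ge|M_{ij}|\ge 0$ for all $i,j$. By the Perron-Frobenius theorem applied to non-negative matrices (combined with the fact that $N$ has only real eigenvalues), the spectral radius $\lambda_N$ is itself an eigenvalue of $N$, and in particular $\lambda_N$ equals the largest eigenvalue of $N$. The usual variational characterization for symmetric matrices then yields
\[
w^T N w\le \lambda_N\,|w|^2\qquad\text{for every } w\in\R^n.
\]

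Next I would pick a real unit eigenvector $v\in\R^n$ of $M$ corresponding to a real eigenvalue $\mu$ with $|\mu|=\lambda_M$; such $v$ exists because $M$ is real symmetric. Writing $|v|:=(|v_1|,\dots,|v_n|)$ for the entry-wise absolute-value vector, the triangle inequality and the hypothesis $N_{ij}\ge|M_{ij}|$ give
\[
\lambda_M=|v^T M v|=\Bigl|\sum_{i,j}M_{ij}v_iv_j\Bigr|\le\sum_{i,j}|M_{ij}|\,|v_i|\,|v_j|\le\sum_{i,j}N_{ij}\,|v_i|\,|v_j|=|v|^T N|v|.
\]
Since $|v|$ is also a unit vector, the Rayleigh bound from the previous step yields $|v|^T N|v|\le\lambda_N$, and therefore $\lambda_M\le\lambda_N$, which is the claim.

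There is no real obstacle here --- this is a classical consequence of Perron-Frobenius together with the Rayleigh quotient. The one conceptual point requiring attention is that the Rayleigh estimate $w^T N w\le\lambda_{\max}(N)|w|^2$ involves the \emph{largest} eigenvalue of $N$, not the largest eigenvalue in absolute value; the role of the non-negativity of $N$ is precisely to guarantee, via Perron-Frobenius, that these two quantities coincide, so that the final estimate reads $\lambda_M\le\lambda_N$ as stated.
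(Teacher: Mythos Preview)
Your proof is correct and follows essentially the same idea as the paper's: pick a unit eigenvector $v$ of $M$ realizing $\lambda_M$, pass to the entrywise absolute value $|v|$, and compare against $N$. The only difference is cosmetic: the paper compares \emph{norms}, writing $\lambda_M = |Mv| \le |N\,|v|\,| \le \lambda_N$ (using that the operator $2$-norm of a real symmetric matrix equals its spectral radius), whereas you compare \emph{quadratic forms}, $\lambda_M = |v^T M v| \le |v|^T N\,|v| \le \lambda_N$.

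One remark: your appeal to Perron--Frobenius, while correct, is unnecessary. The Rayleigh bound gives $|v|^T N\,|v| \le \lambda_{\max}(N)$, and trivially $\lambda_{\max}(N) \le \max_i |\lambda_i(N)| = \lambda_N$, since the largest eigenvalue is in particular one of the eigenvalues. Thus the chain $\lambda_M \le \lambda_{\max}(N) \le \lambda_N$ already yields the claim without using non-negativity of $N$ at all. (Perron--Frobenius would only be needed for the reverse inequality $\lambda_N \le \lambda_{\max}(N)$, i.e.\ equality, which is not required here.) The paper's operator-norm route likewise avoids Perron--Frobenius entirely.
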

\begin{proof}
  Let $x=(x_1,..,x_n)$ be a unit eigenvector of $M$ for which $|Mx|=\lambda_M$.  Note that $|x|=(|x_1|,..,|x_n|)$ is also a unit vector.  Then we have $\lambda_M=|Mx|=\sqrt{|\sum_j M_{ij}x_i|^2}\leq \sqrt{|\sum_j N_{ij}|x_i||^2}=N|x|\leq \lambda_N$.
  \end{proof}

 We will calculate $Z$ in each case using MAPLE.  From this point, we can of course compute the eigenvalues of $Z$ directly using MAPLE, so obtaining an eigenvalue estimate for $M_2$.  However, we will use Lemmas \ref{l-eigenvalue} and \ref{weightedrowsumsZ} here as they are elementary and similar to our methods for $M_2$ in the classical case.  In fact, most of the terms in $Z$ are zero and one may be able to decompose $Z$ into quadratic forms
of much smaller size so that Lemmas \ref{l-eigenvalue} and
 \ref{weightedrowsumsZ} can be applied without using a computer.  In all cases other than $(G_2,\a_2)$, $(E_7,\a_5)$ and $(F_4,\a_2)$, the estimate provided by Lemma \ref{l-eigenvalue} will be sufficient while in the cases of  $(G_2,\a_2)$, $(E_7,\a_5)$ and  $(F_4,\a_2)$ Lemma \ref{weightedrowsumsZ} is used to estimate the eigenvalue of $Z$.

  In the subsections below we just present the results of the MAPLE calculations, and we indicate the algorithms used in the appendix.  For each Lie algebra below, the dual Cartan subalgebra $\mathfrak{h}^*$ is associated to some Euclidean subspace $V$, and the root system is given as a set of vectors in $V$. We refer to \cite{Bourbaki} for details.  We will use $\xi_1,...., \xi_n$ to denote the standard coordinates on $\R^n$ and $\ve_1,...,\ve_n$ to denote the standard basis vectors of $\R^n$.

\subsection{The space  $(G_2,\a_2)$}

 Let $V$ be the hyperplane in $\R^3$ with $\xi_1+\xi_2+\xi_3=0$.   The positive roots in $V$ are $$\ve_1-\ve_2, -2\ve_1+\ve_2+\ve_3, -\ve_1 +   \ve_3, -\ve_2+\ve_3, \ve_1-2\ve_2+\ve_3, -\ve_1-\ve_2+2\ve_3.$$  Simple positive roots are $\a_1=\ve_1-\ve_2,\a_2=-2\ve_1+\ve_2+\ve_3$ with respect to which the positive roots are

$$\a_1, \a_2, \a_1+\a_2, 2\a_1+\a_2, 3\a_1+\a_2, 3\a_1+2\a_2.$$

Now $(G_2,\a_1)$ is Hermitian symmetric, so we only consider $(G_2,\a_2)$ for which we have

\be
\begin{split}
\Delta_2^+(1)=&\{\a_1+\a_2, 2\a_1+\a_2; \a_2,3\a_1+\a_2\} \\
\Delta_2^+(2)=&\{3\a_1+2\a_2\}.
\end{split}
\ee
$\Delta_2^+(k)=\emptyset$, for $k\ge3$.

\bee
\left\{
  \begin{array}{ll}
    \dim=5, \Ric=9 g, &   \\
    \text{\rm 4 largest eigenvalues of $M_1$ are 1.5000,1.5000, 8.5000, 9.000}, & \\
\text{\rm eigenvalues of $M_2$ are less than $9$.}\\   \end{array}
\right.
\eee
(the estimate for $M_2$ is obtained by using $\mu=9$ and $s=1$ in Lemma \ref{weightedrowsumsZ} in which case the maximum weighted row sum is $8.6309$).  Thus the space has $QB>0$.

\subsection{The spaces $ (F_4,\a_i) $}\vskip .2cm

\subsubsection{Root system}  Let $V=\R^4$.  The positive roots in $V$ are
$$
\{\ve_i\}_{1\le i\le 4}\cup\{\ve_i+\ve_j\}_{1\le i<j\le 4}\cup\{\ve_i-\ve_j\}_{1\le i<j\le 4}\cup\{\frac12(\varepsilon_1\pm\varepsilon_2\pm\varepsilon_3\pm\varepsilon_4)\}.
$$
A total of $4+6+6+8=24$ positive roots. Let
\be
\begin{split}
A=&\left(
    a_i
  \right)_{i=1}^{12}=
 \left(
     \begin{array}{c}
      \frac{1}{2}(\ve_1+\ve_2+\ve_3+\ve_4)\\
 \frac{1}{2}(\ve_1+\ve_2-\ve_3+\ve_4)\\
  \frac{1}{2}(\ve_1+\ve_2+\ve_3-\ve_4)\\
\frac{1}{2}(\ve_1+\ve_2-\ve_3-\ve_4)\\
\frac{1}{2}(\ve_1-\ve_2+\ve_3+\ve_4)\\
 \frac{1}{2}(\ve_1-\ve_2-\ve_3+\ve_4)\\
 \frac{1}{2}(\ve_1-\ve_2+\ve_3-\ve_4)\\
\frac{1}{2} (\ve_1-\ve_2-\ve_3-\ve_4)\\
\ve_1\\
\ve_2\\  \ve_3\\  \ve_4\\
\end{array}
   \right)\\
   \end{split},
\begin{split}
B=&\left(
    b_i
  \right)_{i=1}^{12}=
 \left(
     \begin{array}{c}
     \ve_1+\ve_2\\  \ve_1+\ve_3\\ \ve_1+\ve_4\\ \ve_2+\ve_3\\ \ve_2+\ve_4\\ \ve_3+\ve_4\\
\ve_1-\ve_2\\  \ve_1-\ve_3\\ \ve_1-\ve_4\\ \ve_2-\ve_3\\ \ve_2-\ve_4\\ \ve_3-\ve_4\\
     \end{array}
   \right)\\
   \end{split}
   \ee

The simple positive roots are $\a_1 =b_{10}, \a_2=b_{12}, \a_3=a_{12}, \a_4=
a_8$.
The matrix for $(\a_i )$ is  $$g=\left(
                                  \begin{array}{cccc}
                                    0 & 1 & -1 & 0 \\
                                    0 & 0 &  1 & -1 \\
                                    0 & 0 & 0 & 1 \\
                                    1/2 & -1/2 &-1/2 & -1/2 \\
                                  \end{array}
                                \right).
                                $$
The coordinates of $(a_i)$ with respect to the ordered basis $\{\a_1,\dots,\a_4\}$ are given by the columns of
\be (gg^t)^{-1}gA^t=\left(\begin {array}{cccccccccccc} 1&1&1&1&0&0&0&0&1&1&0&0
\\ \noalign{\medskip}2&1&2&1&1&0&1&0&2&1&1&0\\ \noalign{\medskip}3&2&2
&1&2&1&1&0&3&1&1&1\\ \noalign{\medskip}1&1&1&1&1&1&1&1&2&0&0&0
\end {array} \right)\ee
The coordinates of $(b_i)$ with respect to the ordered basis $\{\a_1,\dots,\a_4\}$ are given by the columns of

\be  (gg^t)^{-1}gB^t =\left(\begin {array}{cccccccccccc} 2&1&1&1&1&0&0&1&1&1&1&0
\\ \noalign{\medskip}3&3&2&2&1&1&1&1&2&0&1&1\\ \noalign{\medskip}4&4&4
&2&2&2&2&2&2&0&0&0\\ \noalign{\medskip}2&2&2&0&0&0&2&2&2&0&0&0
\end {array} \right) \ee
That is $a_1=\a_1+2\a_2+3\a_3+\a_4$, etc.  From this it is easy to write down the $\Delta^+(k)'s$ for $(F_4,\a_i)$ for $1\le i\le 4$.  Next, let us determine $\wt N_{\a,\pm\beta}$.

\begin{lma}\label{l-N-F} Let $\a$, $\beta$ be positive roots.
\be\label{eq-N-F-1}
\wt N_{\a,\beta}= \left\{
 \begin{array}{ll}
    \sgn(N_{\a,\beta}), & \hbox{if $\a,\beta\in A$, and $|\a+\beta|^2=1$;} \\
    \sqrt2\, \sgn(N_{\a,\beta}), & \hbox{otherwise.}
  \end{array}
\right.
\ee
If $\a-\beta\neq0$, then
\be\label{eq-N-F-2}
\wt N_{\a,-\beta}= \left\{
 \begin{array}{ll}
    \sgn(N_{\a,-\beta}), & \hbox{if $\a,\beta\in A$, and $|\a-\beta|^2=1$;} \\
    \sqrt2 \,\sgn(N_{\a,-\beta}), & \hbox{otherwise.}
  \end{array}
\right.
\ee

\end{lma}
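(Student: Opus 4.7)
My plan is to prove Lemma \ref{l-N-F} by a case analysis based on the possible root lengths squared, which in $F_4$ are only $1$ (short roots, exactly the elements of $A$) and $2$ (long roots, the elements of $B$). Recall $N_{\a,\beta}=\pm(p+1)$ where $p$ is the largest integer such that $\beta-p\a$ is a root, and $\wt N_{\a,\beta}=\frac{|\a||\beta|}{|\a+\beta|}N_{\a,\beta}$. Throughout I will use Cartan integrality, namely that $2(\a,\beta)/|\beta|^2\in\mathbb{Z}$ for any roots $\a,\beta$, together with the constraint $|\a+\beta|^2\in\{1,2\}$ whenever $\a+\beta$ is a root.

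The cases are organized by $(|\a|^2,|\beta|^2)$. \emph{Case 1} ($\a,\beta\in A$, both short): integrality forces $(\a,\beta)\in\tfrac12\mathbb{Z}$, and the constraint $|\a+\beta|^2=2+2(\a,\beta)\in\{1,2\}$ leaves only $(\a,\beta)=-1/2$ or $(\a,\beta)=0$. In the first subcase $|\a+\beta|^2=1$; I check $|\beta-\a|^2=3$, not a root, so $p=0$ and $N_{\a,\beta}=\pm1$, giving $\wt N_{\a,\beta}=\frac{1\cdot1}{1}N_{\a,\beta}=\sgn(N_{\a,\beta})$. In the second subcase $|\a+\beta|^2=2$; the reflection $s_\a$ fixes $\beta$, so the $\a$-string through $\beta$ is symmetric, and since $\beta+\a$ is a root while $|\beta+2\a|^2=5$ is not, the string is $\{\beta-\a,\beta,\beta+\a\}$, giving $p=1$ and $N_{\a,\beta}=\pm2$, hence $\wt N_{\a,\beta}=\frac{1\cdot1}{\sqrt2}(\pm2)=\sqrt2\,\sgn(N_{\a,\beta})$. \emph{Case 2} (both long): integrality forces $(\a,\beta)\in\mathbb{Z}$, so $|\a+\beta|^2\in\{1,2\}$ forces $(\a,\beta)=-1$ and $|\a+\beta|^2=2$. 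Then $|\beta-\a|^2=6$ is not a root, $p=0$, $N_{\a,\beta}=\pm1$, and $\wt N_{\a,\beta}=\frac{\sqrt2\cdot\sqrt2}{\sqrt2}(\pm1)=\sqrt2\,\sgn(N_{\a,\beta})$.

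\emph{Case 3} (mixed lengths, say $|\a|^2=1,|\beta|^2=2$): Cartan integrality via $\beta$ forces $(\a,\beta)\in\mathbb{Z}$, so $|\a+\beta|^2=3+2(\a,\beta)\in\{1,2\}$ rules out $|\a+\beta|^2=2$ and forces $(\a,\beta)=-1$, $|\a+\beta|^2=1$. Here $\beta\notin A$, so the hypothesis of the first alternative of the lemma fails. I compute $|\beta-\a|^2=5$, not a root, so $p=0$, $N_{\a,\beta}=\pm1$, and $\wt N_{\a,\beta}=\frac{1\cdot\sqrt2}{1}(\pm1)=\sqrt2\,\sgn(N_{\a,\beta})$. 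The symmetric subcase with $|\a|^2=2,|\beta|^2=1$ is identical. This exhausts all cases and establishes \eqref{eq-N-F-1}.

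For \eqref{eq-N-F-2} the argument is essentially identical: replacing $\beta$ by $-\beta$, the relevant constraint is $|\a-\beta|^2\in\{1,2\}$, and the same integrality argument on $(\a,\beta)$ yields the same possibilities with the roles of sum/difference swapped. The resulting case analysis produces the same formulas with $\a+\beta$ replaced by $\a-\beta$. The only mildly delicate point, as in Case 1b above, is verifying $p=1$ (equivalently that $\beta+\a$ is also a root) when both roots are short and their difference is long; this again follows immediately from the Weyl symmetry $s_\a(\beta)=\beta$ forcing the $\a$-string through $\beta$ to be symmetric about $\beta$. The main obstacle throughout is simply being careful with these string-length computations, but Cartan integrality dramatically restricts the possibilities and the whole proof reduces to a handful of one-line length-squared computations.
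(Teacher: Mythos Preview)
Your proof is correct and follows essentially the same route as the paper's: a case analysis by root lengths, using that $F_4$ has only roots of squared length $1$ (the set $A$) or $2$ (the set $B$), combined with length-squared computations to pin down $(\alpha,\beta)$, the $\alpha$-string through $\beta$, and hence $N_{\alpha,\beta}$. The only cosmetic differences are that you invoke Cartan integrality and the Weyl reflection $s_\alpha$ explicitly (to deduce the string symmetry in Case~1b), whereas the paper cites \cite[p.324]{Fulton-Harris} for the fact that $(\alpha,\beta)=0$ implies $\alpha-\beta$ is a root; the computations are otherwise identical.
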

\begin{proof} First note that a root $\a$ is in $A$ or $-\a$ is $A$ if and only if $||\a||^2=1$ and it is in $B$ or $-\a$ is in $B$ if and only if $||\a||^2=2$.

To prove \eqref{eq-N-F-1}, it is sufficient to consider the case that   $\a+\beta$ is a root. Suppose $\a, \beta\in A$,  and  $||\a+\beta||^2=1$ then $(\a,\beta)=-\frac12$. Suppose $\a-\beta$ is also a root, then $||\a-\beta||^2=1,$ or 2 and $(\a,\beta)= \frac12$ or 0, which is impossible. Hence $N_{\a,\beta}=\pm1$. In this case,
$$
\wt N_{\a,\beta}=\frac{|\a||\beta|}{|\a+\beta|}N_{\a,\beta}=\sgn(N_{\a,\beta}).
$$
Suppose $||\a+\beta||^2=2$, then $(\a,\beta)=0$, and $\a-\beta$ is also a root, see \cite[p.324]{Fulton-Harris}. Moreover, $||\a-2\beta||^2=5$ and so $\a-2\beta$ is not a root. Hence $N_{\a,\beta}=\pm 2$. Then $\wt N_{\a,\beta}=\sqrt2\,\sgn(N_{\a,\beta}).$

Suppose $\a\in A,  \beta\in B$,  and  $||\a+\beta||^2=1$ then $(\a,\beta)=-1$. Suppose $\a-\beta$ is also a root, then $||\a-\beta||^2=1,$ or 2, and $(\a,\beta)= 1$ or $\frac12$. Hence $\a-\beta$ is not a root. In this case, $\wt N_{\a,\beta}=\sqrt2\,\sgn(N_{\a,\beta}).$ Suppose $||\a+\beta||^2=2$, then $(\a,\beta)=-\frac12$.
 that $(\a,\beta)$ is an integer. Hence this is impossible.

Suppose  $\a, \beta\in B$,  and  $||\a+\beta||^2=1$ then $(\a,\beta)=-\frac32$. This is impossible. Hence $||\a+\beta||^2=2$ and $(\a,\beta)=-1$. As before, we can prove that $\a-\beta$ is not a root. So $N_{\a,\beta}=\pm1$ and $\wt N_{\a,\beta}=\sqrt2\, \sgn(N_{\a,\beta})$. This completes the proof of \eqref{eq-N-F-1}. The proof of \eqref{eq-N-F-2} is  similar.

\end{proof}

\subsubsection{The space $(F_4,\a_1)$}
\be
\begin{split}
\Delta_1^+(1)=&\{a_1, a_2, a_3, a_4, a_9, a_{10}, b_2, b_3, b_4, b_5, b_8, b_9, b_{10}, b_{11}\} \\
\Delta_1^+(2)=&\{b_1\} \\
\end{split}
\ee
$\Delta_1^+(k)=\emptyset$, for $k\ge3$.

\bee
\left\{
  \begin{array}{ll}
    \dim=15, \Ric=8g, &   \\
    \text{\rm 4 largest eigenvalues of $M_1$ are 8, 4.5, 4.5, 4.5}, & \\
\text{\rm eigenvalues of $M_2$ are at most $4.9142$  (using Lemma \ref{l-eigenvalue})}.
  \end{array}
\right.
\eee
Thus the space has $QB> 0$.

\subsubsection{The space $(F_4,\a_2)$}
\be
\begin{split}
\Delta_2^+(1)=&\{a_2,a_4,a_5,a_7, a_{10},a_{11}; b_5,b_6,b_7,b_8,b_{11}, b_{12}\}\\
\Delta_2^+(2)=& \{a_1  ,a_3 ,a_9, b_3 , b_4 , b_9\}\\
\Delta_2^+(3)=&\{ b_1, b_2\}.
\end{split}
\ee
$\Delta_2^+(k)=\emptyset$, for $k\ge4$.

\bee
\left\{
  \begin{array}{ll}
    \dim=20, \Ric=5g, &   \\
    \text{\rm 4 largest eigenvalues of $M_1$ are 5, 4.8941, 4.8941, 4.6543}, & \\
\text{\rm eigenvalues of $M_2$ are less than $5$}.
  \end{array}
\right.
\eee
(the estimate for $M_2$ is obtained by using $\mu=5$ and $s=4$ in Lemma \ref{weightedrowsumsZ} in which case the maximum weighted row sum is $4.9822$.  When we take $s=10$, then the maximal weighted row sum actually becomes $4.8070$).

 Thus the space has $QB> 0$.

\subsubsection{The space $(F_4,\a_3)$}
\be
\begin{split}
\Delta_3^+(1)=&\{a_4, a_6, a_7, a_{10},a_{11},a_{12}\}\\
\Delta_3^+(2)=& \{a_2,a_3,a_5, b_4,..,b_9\}\\
\Delta_3^+(3)=& \{a_1, a_9\}\\
\Delta_3^+(4)=& \{b_1, b_2,b_3\}
\end{split}
\ee
$\Delta_3^+(k)=\emptyset$, for $k\ge5$.

\bee
\left\{
  \begin{array}{ll}
    \dim=20 ,  \Ric=7/2g, &   \\
    \text{\rm 4 largest eigenvalues of $M_1$ are 3.6888, 3.5, 2.4137, 2.4137}, & \\
  \end{array}
\right.
\eee
 Thus the space does not have  $QB\geq  0$.

\subsubsection{The space $(F_4,\a_4)$}
\be
\begin{split}
\Delta_4^+(1)=&\{a_1,..,a_8\}\\
\Delta_4^+(2)=&\{a_9; b_1,b_2,b_3,b_7,b_8,b_9\}.
\end{split}
\ee
$\Delta_4^+(k)=\emptyset$, for $k\ge3$.

\bee
\left\{
  \begin{array}{ll}
    \dim=15, \Ric=11/2 g, &   \\
    \text{\rm 4 largest eigenvalues of $M_1$ are 5.5, 2.1328, 2.1328, 2.1328}, & \\
\text{\rm eigenvalues of $M_2$ are at most $3.9571$ (using Lemma \ref{l-eigenvalue})}.
  \end{array}
\right.
\eee
  Thus the space has $QB> 0$.

\subsection{The spaces $  (E_6,\a_i)$}

\subsubsection{Root system}

Consider the subspace $V$ of $\R^8$ such that $\xi_6=\xi_7=-\xi_8.$  The positive roots in $V$ are $\pm \ve_i+\ve_j, 1\le i<j\le 5$ (total 20), and
$$
\frac12(\ve_8-\ve_7-\ve_6+\sum_{i=1}^5(-1)^{\nu(i)}\ve_i)
$$
so that $\sum_1^5\nu(i)$ is even, i.e. the number of minus sign is even (total 16).
Let
\be
\begin{split}
A=&\left(
    a_i
  \right)_{i=1}^{10}=
 \left(
     \begin{array}{c}
       \ve_1+\ve_2 \\
        \ve_1+\ve_3 \\
      \ve_1+\ve_4 \\
      \ve_1+\ve_5 \\
       \ve_2+\ve_3 \\
       \ve_2+\ve_4 \\
       \ve_2+\ve_5\\
       \ve_3+\ve_4 \\
       \ve_3+\ve_5 \\
       \ve_4+\ve_5 \\
     \end{array}
   \right)\\
   \end{split},
\begin{split}
B=&\left(
    b_i
  \right)_{i=1}^{10}=
 \left(
     \begin{array}{c}
       -\ve_1+\ve_2 \\
        -\ve_1+\ve_3 \\
      -\ve_1+\ve_4 \\
      -\ve_1+\ve_5 \\
       -\ve_2+\ve_3 \\
       -\ve_2+\ve_4 \\
       -\ve_2+\ve_5\\
       -\ve_3+\ve_4 \\
       -\ve_3+\ve_5 \\
       -\ve_4+\ve_5 \\
     \end{array}
   \right)\\
   \end{split}
   \ee
   \be
   \begin{split}
C=&\left(
    c_i
  \right)_{i=1}^{10}=
 \left(
     \begin{array}{c}
      \frac12(\ve_8-\ve_7-\ve_6-\ve_1-\ve_2+\ve_3+\ve_4+\ve_5) \\
       \frac12(\ve_8-\ve_7-\ve_6-\ve_1+\ve_2-\ve_3+\ve_4+\ve_5) \\
     \frac12(\ve_8-\ve_7-\ve_6-\ve_1+\ve_2+\ve_3-\ve_4+\ve_5) \\
      \frac12(\ve_8-\ve_7-\ve_6-\ve_1+\ve_2+\ve_3+\ve_4-\ve_5) \\
       \frac12(\ve_8-\ve_7-\ve_6+\ve_1-\ve_2-\ve_3+\ve_4+\ve_5) \\
       \frac12(\ve_8-\ve_7-\ve_6+\ve_1-\ve_2+\ve_3-\ve_4+\ve_5) \\
      \frac12(\ve_8-\ve_7-\ve_6+\ve_1-\ve_2+\ve_3+\ve_4-\ve_5)\\
      \frac12(\ve_8-\ve_7-\ve_6+\ve_1+\ve_2-\ve_3-\ve_4+\ve_5)\\
       \frac12(\ve_8-\ve_7-\ve_6+\ve_1+\ve_2-\ve_3+\ve_4-\ve_5) \\
       \frac12(\ve_8-\ve_7-\ve_6+\ve_1+\ve_2+\ve_3-\ve_4-\ve_5) \\
     \end{array}
   \right)\\
   \end{split}
   \ee
    \be
   \begin{split}
D=&\left(
    \begin{array}{c}
      d_1 \\
      d_2 \\
      d_3 \\
      d_4 \\
      d_5 \\
      d_6 \\
    \end{array}
  \right)=
 \left(
     \begin{array}{c}
      \frac12(\ve_8-\ve_7-\ve_6+\ve_1+\ve_2+\ve_3+\ve_4+\ve_5) \\
       \frac12(\ve_8-\ve_7-\ve_6+\ve_1-\ve_2-\ve_3-\ve_4-\ve_5) \\
     \frac12(\ve_8-\ve_7-\ve_6-\ve_1+\ve_2-\ve_3-\ve_4-\ve_5) \\
      \frac12(\ve_8-\ve_7-\ve_6-\ve_1-\ve_2+\ve_3-\ve_4-\ve_5) \\
       \frac12(\ve_8-\ve_7-\ve_6-\ve_1-\ve_2-\ve_3+\ve_4-\ve_5) \\
       \frac12(\ve_8-\ve_7-\ve_6-\ve_1-\ve_2-\ve_3-\ve_4+\ve_5) \\
     \end{array}
   \right)\\
   \end{split}
   \ee

Simple positive roots are: $\a_1=d_2, \a_2=a_1,\a_3=b_1,\a_4=b_5,\a_5=b_8,\a_6=b_{10}.$ The matrix for $(\a_i)$ is
\be
g=\left(
    \begin{array}{cccccccc}
      \frac12 & -\frac12 & -\frac12 & -\frac12 & -\frac12 &-\frac12 &-\frac12 & \frac12 \\
      1 & 1 & 0 & 0 & 0 & 0 & 0 & 0 \\
      -1 & 1 & 0 & 0 & 0 & 0 & 0 & 0 \\
      0 & -1 & 1 & 0 & 0 & 0 & 0 & 0 \\
      0 & 0 & -1 & 1 & 0 & 0 & 0 & 0 \\
      0 & 0 & 0 & -1 & 1 & 0 & 0 & 0 \\
    \end{array}
  \right)
\ee
The coordinates of $a_i$ relative the ordered basis $\{\a_1,\dots,\a_6\}$ are the columns  of
\be (gg^t)^{-1}gA^t =
\left(
    \begin{array}{cccccccccc}
     0  &        0 &   0  &   0&   0  &  0& 0& 0& 0&0\\
 1& 1& 1& 1& 1& 1& 1&1 &1&1\\
  0&  0 &   0& 0& 1& 1& 1& 1&1 &1 \\
  0&1& 1& 1& 1& 1& 1& 2 &2&2\\
  0&  0&1& 1&   0&1& 1& 1&1 &2\\
  0&  0   &0& 1&   0   &0& 1& 0&1&1 \\
\end{array}
  \right)
\ee
The coordinates of $b_i$ relative the ordered basis $\{\a_1,\dots,\a_6\}$ are the columns  of
 \be
(gg^t)^{-1}gB^t =\left(
    \begin{array}{cccccccccc}

       0    &      0   & 0 & 0 & 0& 0 & 0 &0 & 0 & 0 \\
 0 &0 & 0&0 &0 & 0 & 0 & 0 & 0 &0 \\
  1 & 1 & 1 & 1 & 0&0 & 0 &0 & 0 & 0 \\
  0& 1 & 1 & 1 & 1 & 1 & 1 & 0 & 0 &0 \\
  0& 0& 1 & 1 & 0& 1 & 1 & 1 & 1 & 0 \\
  0& 0&0 & 1 & 0&0 & 1 &0 & 1 & 1 \\
\end{array}
  \right)
\ee
The coordinates of $c_i$ relative the ordered basis $\{\a_1,\dots,\a_6\}$ are the columns  of

\be(gg^t)^{-1}gC^t =
 \left(
    \begin{array}{cccccccccc}
  1  &   1 &1 &1 &1 &1 &1 &1 &1 &1 \\
 1 &1 &1 &1 &1 &1 &1 &1 &1 &1  \\
 2 &2 &2 &2 &1 &1 &1 &1 &1 &1 \\
 3 &2 &2 &2 &2 &2 &2 &1 &1 &1 \\
 2 &2 &1 &1 &2 &1 &1 &1 &1 &  0\\
 1 &1 &1 &   0 &1 &1  &   0 &1 & 0 &  0\\
 \end{array}
  \right)
  \ee
 The coordinates of $d_i$ relative the ordered basis $\{\a_1,\dots,\a_6\}$ are the columns  of
  \be(gg^t)^{-1}gD^t =
 \left(
    \begin{array}{cccccc}
   1  &   1 &1 &1 &1 &1 \\
 2 &0  &  0 &   0 &0  &  0 \\
 2  &  0 &1 &1 &1 &1 \\
 3 &   0  &  0 &1 &1 &1 \\
 2 &  0&  0&  0 &1 &1 \\
 1 &  0&  0&  0  & 0 &1 \\
\end{array}
  \right)
 \ee

We can determine $\wt N_{\a,\beta}$ as before.
\begin{lma}\label{l-N-E} Let $\a, \beta$ be positive roots, then
\be
\wt N_{\a,\beta}=\sqrt 2\,\sgn(N_{\a,\beta}).
\ee
If $\a-\beta\neq 0$, then
\be
\wt N_{\a,-\beta}=\sqrt 2\,\sgn(N_{\a,-\beta}).
\ee
\end{lma}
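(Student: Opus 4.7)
The plan is to exploit that $E_6$ is simply-laced so every root has the same length, which collapses all the case analysis in the analogous $F_4$ lemma into a single computation.

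First I would verify that $|\a|^2=2$ for every root $\a$. For $\a=\pm\ve_i+\ve_j$ with $1\le i<j\le 5$, this is immediate. For the half-sum type roots $\frac12(\ve_8-\ve_7-\ve_6+\sum_{i=1}^5(-1)^{\nu(i)}\ve_i)$, each of the $8$ nonzero coordinates contributes $\tfrac14$, again giving squared length $2$. So $|\a|=|\beta|=\sqrt2$ for any positive roots $\a,\beta$.

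Next, assuming $\a+\beta$ is a root (otherwise $N_{\a,\beta}=0$ and both sides of the claimed identity vanish), I would show $N_{\a,\beta}=\pm 1$. Expanding $|\a+\beta|^2=2$ gives $(\a,\beta)=-1$. Recall $N_{\a,\beta}=\pm(p+1)$ where $p$ is the largest integer with $\beta-p\a$ a root, and that the $\a$-string through $\beta$ satisfies $p-q=2(\beta,\a)/(\a,\a)=-1$, so $q=p+1\ge 1$. On the other hand, $|\beta+2\a|^2=2+4(\a,\beta)+8=6$, so $\beta+2\a$ is not a root, forcing $q\le 1$. Hence $q=1$, $p=0$, and $N_{\a,\beta}=\pm 1$. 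Plugging into the definition,
\[
\wt N_{\a,\beta}=\frac{|\a||\beta|}{|\a+\beta|}\,N_{\a,\beta}=\frac{\sqrt2\cdot\sqrt2}{\sqrt2}\,N_{\a,\beta}=\sqrt2\,\sgn(N_{\a,\beta}).
\]

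The case of $\wt N_{\a,-\beta}$ with $\a-\beta\neq 0$ is handled identically, replacing $\beta$ by $-\beta$: if $\a-\beta$ is a root we again get $|\a-\beta|=\sqrt2$ and the same string argument yields $N_{\a,-\beta}=\pm 1$; if $\a-\beta$ is not a root, $N_{\a,-\beta}=0$ and the identity holds trivially. There is no real obstacle here—the only thing to check carefully is the uniform length condition on the half-sum roots, after which the simply-laced property does all the work and eliminates the casework that was needed for $F_4$ in Lemma \ref{l-N-F}.
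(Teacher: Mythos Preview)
Your proof is correct and follows essentially the same approach as the paper. The paper's proof simply says ``similar to Lemma~\ref{l-N-F} using the fact that if $\a$ is a root then $|\a|^2=2$,'' and your argument spells this out: the simply-laced property forces $(\a,\beta)=-1$ whenever $\a+\beta$ is a root, a single length check (you use $|\beta+2\a|^2=6$; the paper's $F_4$ template would equivalently use $|\a-\beta|^2=6$) then pins down $p=0$, giving $N_{\a,\beta}=\pm1$ and the result.
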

\begin{proof} The proof is  similar to the proof of Lemma \ref{l-N-F} using the fact that if $\a$ is a root then $|\a|^2=2$.
\end{proof}

Since $(E_6, \a_1)$ and $(E_6, \a_6)$ are Hermitian symmetric space, we only consider $(E_6, \a_i)$, $2\le i\le 5$ below.

\subsubsection{The space $(E_6,\a_2)$}

\be
\begin{split}
  \Delta_2^+(1) =& \{a_1,\dots,a_{10};c_1,\dots,c_{10}\} \\
  \Delta_2^+(2) =&  \{d_1\}
\end{split}
\ee
$\Delta_2^+(k)=\emptyset$, for $k\ge3$.

\bee
\left\{
  \begin{array}{ll}
    \dim=21, \Ric=11g, &   \\
    \text{\rm 4 largest eigenvalues of $M_1$ are 5.5000,
    5.5000,
    5.5000,
   11.0000}, & \\
\text{\rm eigenvalues of $M_2$ are at most $5.5$ (using Lemma \ref{l-eigenvalue})}.
  \end{array}
\right.
\eee

  Thus the space    has $QB>0$.

\subsection{The space  $(E_6,\a_3)$}

\be
\begin{split}
  \Delta_3^+(1) =& \{a_5,\dots,a_{10};b_1,\dots,b_{4};c_5,\dots,c_{10};d_3,\dots,d_6\} \\
 \Delta_3^+(2) =&  \{c_1,\dots,c_4;d_1\}
\end{split}
\ee
$\Delta_3^+(k)=\emptyset$, for $k\ge3$.
\bee
\left\{
  \begin{array}{ll}
    \dim=25, \Ric=9g, &   \\
    \text{\rm 4 largest eigenvalues of $M_1$ are 5.3117,
    5.3117,
    8.5000,
    9.0000}, & \\
\text{\rm eigenvalues of $M_2$ are at most $8.5$ (using Lemma \ref{l-eigenvalue})}.
  \end{array}
\right.
\eee

Thus the space    has $QB>0$.

\subsubsection{The space  $(E_6,\a_4)$}

\be
\begin{split}
  \Delta_4^+(1) =& \{a_2,\dots,a_{7};b_2,\dots,b_{7};c_8,\dots,c_{10};d_4,\dots,d_6\} \\
  \Delta_4^+(2) =&  \{a_8,\dots,a_{10};c_2,\dots,c_7\}\\
  \Delta_4^+(3)=&\{c_1,d_1\}.
\end{split}
\ee
$\Delta_4^+(k)=\emptyset$, for $k\ge4$.
\bee
\left\{
  \begin{array}{ll}
    \dim=29, \Ric=7g, &   \\
    \text{\rm 4 largest eigenvalues of $M_1$ are 5.8226,
    5.8226,
    7.0000,
    7.1468}.
  \end{array}
\right.
\eee

 The space    does not have $QB\ge0$.

\subsubsection{The space  $(E_6,\a_5)$}

\be
\begin{split}
  \Delta_5^+(1) =& \{a_3,a_4,a_6,\dots,a_9;b_3,b_{4}, b_6,\dots,b_9;c_3,c_4, c_6,\dots,c_{9};d_5,d_6\} \\
  \Delta_5^+(2) =&  \{a_{10};c_1,c_2,c_5;d_1\}
\end{split}
\ee
$\Delta_5^+(k)=\emptyset$, for $k\ge3$.

\bee
\left\{
  \begin{array}{ll}
    \dim=25, \Ric=9g, &   \\
    \text{\rm 4 largest eigenvalues of $M_1$ are 5.3117,
    5.3117,
    8.5000,
    9.0000}, & \\
\text{\rm eigenvalues of $M_2$ are at most $ 8.5$ (using Lemma \ref{l-eigenvalue})}.
  \end{array}
\right.
\eee

  Thus the space    has $QB>0$.

\subsection{The spaces $ (E_7,\a_i)$}

\subsubsection{Root system}
Consider the subspace $V$ of $\R^8$, orthogonal to $\ve_7+\ve_8$.
The positive roots in $V$ are $\pm \ve_i+\ve_j, 1\le i<j\le 6$ (total 30), $-\ve_7+\ve_8$ and
$$
\frac12(\ve_8-\ve_7+\sum_{i=1}^6(-1)^{\nu(i)}\ve_i)
$$
so that $\sum_1^6\nu(i)$ is odd, i.e.   the number of minus sign is odd (total 32).  Let
\be
\begin{split}
A=&\left(a_i
  \right)_{i=1}^{15}=
 \left(
     \begin{array}{c}
       \ve_1+\ve_2 \\
        \ve_1+\ve_3 \\
      \ve_1+\ve_4 \\
      \ve_1+\ve_5 \\
      \ve_1+\ve_6 \\
       \ve_2+\ve_3 \\
       \ve_2+\ve_4 \\
       \ve_2+\ve_5\\
       \ve_2+\ve_6 \\
       \ve_3+\ve_4 \\
       \ve_3+\ve_5 \\
       \ve_3+\ve_6 \\
       \ve_4+\ve_5 \\
       \ve_4+\ve_6 \\
       \ve_5+\ve_6 \\
     \end{array}
   \right)
   \end{split},
\begin{split}
B=&\left(b_i
  \right)_{i=1}^{16}=
 \left(
     \begin{array}{c}
       -\ve_1+\ve_2 \\
       - \ve_1+\ve_3 \\
      -\ve_1+\ve_4 \\
      -\ve_1+\ve_5 \\
     - \ve_1+\ve_6 \\
       -\ve_2+\ve_3 \\
       -\ve_2+\ve_4 \\
       -\ve_2+\ve_5\\
       -\ve_2+\ve_6 \\
       -\ve_3+\ve_4 \\
       -\ve_3+\ve_5 \\
       -\ve_3+\ve_6 \\
       -\ve_4+\ve_5 \\
       -\ve_4+\ve_6 \\
       -\ve_5+\ve_6 \\
        -\ve_7+\ve_8 \\
     \end{array}
   \right)\\
   \end{split}
   \ee
    \be
   \begin{split}
C=&\left(
    \begin{array}{c}
      c_1 \\
      c_2 \\
      c_3 \\
     c_4 \\
      c_5 \\
      c_6 \\
    \end{array}
  \right)=
 \frac12\left(
     \begin{array}{c}
      \ve_8-\ve_7-\ve_1+\ve_2+\ve_3+\ve_4+\ve_5+\ve_6\\
         \ve_8-\ve_7+\ve_1-\ve_2+\ve_3+\ve_4+\ve_5+\ve_6\\
       \ve_8-\ve_7+\ve_1+\ve_2-\ve_3+\ve_4+\ve_5+\ve_6\\
        \ve_8-\ve_7+\ve_1+\ve_2+\ve_3-\ve_4+\ve_5+\ve_6\\
        \ve_8-\ve_7+\ve_1+\ve_2+\ve_3+\ve_4-\ve_5+\ve_6\\
         \ve_8-\ve_7+\ve_1+\ve_2+\ve_3+\ve_4+\ve_5-\ve_6\\
     \end{array}
   \right )
   \end{split}
   \ee

 \be
   \begin{split}
D=&\left(
    \begin{array}{c}
      d_i
    \end{array}
  \right)_{i=1}^{20}=
 \frac12\left(
     \begin{array}{c}
        \ve_8-\ve_7-\ve_1-\ve_2-\ve_3+\ve_4+\ve_5+\ve_6  \\
      \ve_8-\ve_7-\ve_1-\ve_2+\ve_3-\ve_4+\ve_5+\ve_6\\
       \ve_8-\ve_7-\ve_1-\ve_2+\ve_3+\ve_4-\ve_5+\ve_6  \\
        \ve_8-\ve_7-\ve_1-\ve_2+\ve_3+\ve_4+\ve_5-\ve_6 \\
        \ve_8-\ve_7-\ve_1+\ve_2-\ve_3-\ve_4+\ve_5+\ve_6  \\
        \ve_8-\ve_7-\ve_1+\ve_2-\ve_3+\ve_4-\ve_5+\ve_6  \\
        \ve_8-\ve_7-\ve_1+\ve_2-\ve_3+\ve_4+\ve_5-\ve_6  \\
        \ve_8-\ve_7-\ve_1+\ve_2+\ve_3-\ve_4-\ve_5+\ve_6  \\
         \ve_8-\ve_7-\ve_1+\ve_2+\ve_3-\ve_4+\ve_5-\ve_6  \\
          \ve_8-\ve_7-\ve_1+\ve_2+\ve_3+\ve_4-\ve_5-\ve_6  \\
        \ve_8-\ve_7+\ve_1-\ve_2-\ve_3-\ve_4+\ve_5+\ve_6 \\
         \ve_8-\ve_7+\ve_1-\ve_2-\ve_3+\ve_4-\ve_5+\ve_6 \\
          \ve_8-\ve_7+\ve_1-\ve_2-\ve_3+\ve_4+\ve_5-\ve_6 \\
          \ve_8-\ve_7+\ve_1-\ve_2+\ve_3-\ve_4-\ve_5+\ve_6 \\
          \ve_8-\ve_7+\ve_1-\ve_2+\ve_3-\ve_4+\ve_5-\ve_6 \\
          \ve_8-\ve_7+\ve_1-\ve_2+\ve_3+\ve_4-\ve_5-\ve_6 \\
          \ve_8-\ve_7+\ve_1+\ve_2-\ve_3-\ve_4-\ve_5+\ve_6 \\
          \ve_8-\ve_7+\ve_1+\ve_2-\ve_3-\ve_4+\ve_5-\ve_6 \\
          \ve_8-\ve_7+\ve_1+\ve_2-\ve_3+\ve_4-\ve_5-\ve_6 \\
          \ve_8-\ve_7+\ve_1+\ve_2+\ve_3-\ve_4-\ve_5-\ve_6 \\
     \end{array}
   \right )
   \end{split}
   \ee
  \be
   \begin{split}
E=&\left(
    \begin{array}{c}
      e_1 \\
      e_2 \\
      e_3 \\
     e_4 \\
      e_5 \\
      e_6 \\
    \end{array}
 \right) =
 \frac12\left(
     \begin{array}{c}
      \ve_8-\ve_7+\ve_1-\ve_2-\ve_3-\ve_4-\ve_5-\ve_6\\
        \ve_8-\ve_7-\ve_1+\ve_2-\ve_3-\ve_4-\ve_5-\ve_6 \\
      \ve_8-\ve_7-\ve_1-\ve_2+\ve_3-\ve_4-\ve_5-\ve_6 \\
       \ve_8-\ve_7-\ve_1-\ve_2-\ve_3+\ve_4-\ve_5-\ve_6  \\
        \ve_8-\ve_7-\ve_1-\ve_2-\ve_3-\ve_4+\ve_5-\ve_6 \\
        \ve_8-\ve_7 -\ve_1-\ve_2-\ve_3-\ve_4-\ve_5+\ve_6 \\
     \end{array}
  \right)
   \end{split}
   \ee

 Simple positive roots are: $\a_1=e_1,\a_2=a_1, \a_3=b_1,\a_4=b_6, \a_5=b_{10}, \a_6=b_{13}, \a_7=b_{15}$.  The matrix for $(\alpha_i )$ is:
   \be
   g= \left(
      \begin{array}{cccccccc}
        1/2 & -1/2 & -1/2 &-1/2 & -1/2&-1/2   &-1/2 & 1/2\\
        1 & 1 & 0 & 0 & 0 & 0 & 0 & 0 \\
        -1 & 1 & 0 & 0 & 0 & 0 & 0 & 0 \\
        0 & -1 & 1 & 0 & 0 & 0 & 0 & 0 \\
        0 & 0 & -1 & 1 & 0 & 0 & 0 & 0 \\
        0 & 0 & 0 & -1 & 1 & 0 & 0 & 0 \\
        0 & 0 & 0 & 0 & -1 & 1 & 0 & 0 \\
      \end{array}
    \right)
   \ee

   The coordinates of $(a_i)$ with respect of the ordered basis $\{\a_1,\dots,\a_7\}$ are given by the columns of
   \be(gg^t)^{-1}gA^t=
 \left( \begin {array}{ccccccccccccccc} 0&0&0&0&0&0&0&0&0&0&0&0&0&0&0
\\ \noalign{\medskip}1&1&1&1&1&1&1&1&1&1&1&1&1&1&1
\\ \noalign{\medskip}0&0&0&0&0&1&1&1&1&1&1&1&1&1&1
\\ \noalign{\medskip}0&1&1&1&1&1&1&1&1&2&2&2&2&2&2
\\ \noalign{\medskip}0&0&1&1&1&0&1&1&1&1&1&1&2&2&2
\\ \noalign{\medskip}0&0&0&1&1&0&0&1&1&0&1&1&1&1&2
\\ \noalign{\medskip}0&0&0&0&1&0&0&0&1&0&0&1&0&1&1\end {array}
 \right)
   \ee
The coordinates of $(b_i)$ with respect of the ordered basis $\{\a_1,\dots,\a_7\}$ are given by the columns of

\be(gg^t)^{-1}gB^t=
 \left( \begin {array}{cccccccccccccccc} 0&0&0&0&0&0&0&0&0&0&0&0&0&0&0
&2\\ \noalign{\medskip}0&0&0&0&0&0&0&0&0&0&0&0&0&0&0&2
\\ \noalign{\medskip}1&1&1&1&1&0&0&0&0&0&0&0&0&0&0&3
\\ \noalign{\medskip}0&1&1&1&1&1&1&1&1&0&0&0&0&0&0&4
\\ \noalign{\medskip}0&0&1&1&1&0&1&1&1&1&1&1&0&0&0&3
\\ \noalign{\medskip}0&0&0&1&1&0&0&1&1&0&1&1&1&1&0&2
\\ \noalign{\medskip}0&0&0&0&1&0&0&0&1&0&0&1&0&1&1&1\end {array}
 \right)
\ee The coordinates of $(c_i)$ with respect of the ordered basis $\{\a_1,\dots,\a_7\}$ are given by the columns of

\be(gg^t)^{-1}gC^t=
 \left( \begin {array}{cccccc} 1&1&1&1&1&1\\ \noalign{\medskip}2&2&2&2
&2&2\\ \noalign{\medskip}3&2&2&2&2&2\\ \noalign{\medskip}4&4&3&3&3&3
\\ \noalign{\medskip}3&3&3&2&2&2\\ \noalign{\medskip}2&2&2&2&1&1
\\ \noalign{\medskip}1&1&1&1&1&0\end {array} \right)
\ee
The coordinates of $(d_i)$ with respect of the ordered basis $\{\a_1,\dots,\a_7\}$ are given by the columns of $(gg^t)^{-1}gD^t$ which is:

\be
 \left(\begin {array}{cccccccccccccccccccc} 1&1&1&1&1&1&1&1&1&1&1&1&1
&1&1&1&1&1&1&1\\ \noalign{\medskip}1&1&1&1&1&1&1&1&1&1&1&1&1&1&1&1&1&1
&1&1\\ \noalign{\medskip}2&2&2&2&2&2&2&2&2&2&1&1&1&1&1&1&1&1&1&1
\\ \noalign{\medskip}3&3&3&3&2&2&2&2&2&2&2&2&2&2&2&2&1&1&1&1
\\ \noalign{\medskip}3&2&2&2&2&2&2&1&1&1&2&2&2&1&1&1&1&1&1&0
\\ \noalign{\medskip}2&2&1&1&2&1&1&1&1&0&2&1&1&1&1&0&1&1&0&0
\\ \noalign{\medskip}1&1&1&0&1&1&0&1&0&0&1&1&0&1&0&0&1&0&0&0
\end {array} \right)
\ee
The coordinates of $(e_i)$ with respect of the ordered basis $\{\a_1,\dots,\a_7\}$ are given by the columns of

\be(gg^t)^{-1}gE^t=
 \left(\begin {array}{cccccc} 1&1&1&1&1&1\\ \noalign{\medskip}0&0&0&0
&0&0\\ \noalign{\medskip}0&1&1&1&1&1\\ \noalign{\medskip}0&0&1&1&1&1
\\ \noalign{\medskip}0&0&0&1&1&1\\ \noalign{\medskip}0&0&0&0&1&1
\\ \noalign{\medskip}0&0&0&0&0&1\end {array} \right)
\ee

Since $\a$ is a root implies $|\a|^2=2$, it is easy to see that Lemma \ref{l-N-E} is still true in this case. Note that $(E_7,\a_7)$ is Hermitian symmetric.

\subsubsection{The space $(E_7,\a_1)$}

\be
\begin{split}
\Delta_1^+(1)=&\{c_1,..,c_6, d_1,..,d_{20}, e_1,..,e_6\}\\
\Delta_1^+(2)= &\{b_{16}\}\\
\end{split}
\ee
$\Delta_1^+(k)=\emptyset$ for $k\ge 3.$

\bee
\left\{
  \begin{array}{ll}
    \dim=33, \Ric=17 g, &   \\
    \text{\rm 4 largest eigenvalues of $M_1$ are 17, 7.5, 7.5, 7.5}, & \\
\text{\rm eigenvalues of $M_2$ are at most $7.5$ (using Lemma \ref{l-eigenvalue})}.
  \end{array}
\right.
\eee
 Thus the space has $QB> 0$.

 \subsubsection{The space $(E_7,\a_2)$}
 \be
\begin{split}
\Delta_2^+(1)=&\{a_1,..,a_{15}, d_1,..,d_{20}\}\\
\Delta_2^+(2)= &\{c_1,..,c_6, b_{16}\},\\
\end{split}
\ee
$\Delta_2^+(k)=\emptyset$ for $k\ge 3.$
\bee
\left\{
  \begin{array}{ll}
    \dim=42, \Ric=14 g, &   \\
    \text{\rm 4 largest eigenvalues of $M_1$ are 14, 8.012, 8.012, 8.012}, & \\
\text{\rm eigenvalues of $M_2$ are at most $ 9$ (using Lemma \ref{l-eigenvalue})}.
  \end{array}
\right.
\eee

  Thus the space has $QB> 0$.

\subsubsection{The space $(E_7,\a_3)$}

 \be
\begin{split}
\Delta_3^+(1)=&\{a_6,..,a_{15}, b_1,..,b_5, d_{11},..,d_{20}, e_2,..,e_6\}\\
\Delta_3^+(2)=&\{d_1,..,d_{10}, c_2,..,c_6\}\\
\Delta_3^+(3)=&\{b_{16}, c_1\}\\
\end{split}
\ee
$\Delta_3^+(k)=\emptyset$ for $k\ge 4.$

\bee
\left\{
  \begin{array}{ll}
    \dim=47, \Ric=11 g, &   \\
    \text{\rm 4 largest eigenvalues of $M_1$ are 12.1411, 11, 7.6829, 7.6829}.
  \end{array}
\right.
\eee
  Thus the space does not have $QB\geq 0$.

\subsubsection{The space $(E_7,\a_4)$}

 \be
\begin{split}
\Delta_4^+(1)=&\{a_2,..,a_9, b_2,..,b_9, d_{17},..,d_{20}, e_3,..,e_6\}\\
\Delta_4^+(2)= &\{a_{10},..,a_{15}, d_5,..,d_{16}\}\\
\Delta_4^+(3)= &\{c_3,..,c_6, d_1,..,d_4\}\\
\Delta_4^+(4)= &\{b_{16},c_1, c_2\}\\
\end{split}
\ee
$\Delta_4^+(k)=\emptyset$ for $k\ge 5.$

\bee
\left\{
  \begin{array}{ll}
    \dim=53,\ \Ric=8 g, &   \\
    \text{\rm 4 largest eigenvalues of $M_1$ are 9.5692, 8.1727, 8.1727, 8}.
  \end{array}
\right.
\eee
  Thus the space does not have $QB\geq 0$

\subsubsection{
 $(E_7,\a_5)$}

 \be
\begin{split}
\Delta_5^+(1)= &\{a_3,..,a_5, a_7,..,a_{12}, b_3,..,b_5, b_7,..,b_{12}, d_8,..,d_{10} , d_{14},..,d_{19}, e_4,..,e_6\}\\
\Delta_5^+(2)= &\{a_{13},..,a_{15}, c_4,..,c_6, d_2,..,d_7, d_{11},..,d_{13}\}\\
\Delta_5^+(3)= &\{b_{16}, c_1, c_2, c_3, d_1\}\\
\end{split}
\ee
$\Delta_5^+(k)=\emptyset$ for $k\ge 4.$

\bee
\left\{
  \begin{array}{ll}
    \dim=50, \Ric=10 g, &   \\
    \text{\rm 4 largest eigenvalues of $M_1$ are 10, 9.7882, 9.7882, 8.0097}, & \\
\text{\rm eigenvalues of $M_2$ are less than $10$}.
  \end{array}
\right.
\eee
(the estimate for $M_2$ is obtained by using $\mu=10$ and $s=1$ in Lemma \ref{weightedrowsumsZ} in which case the maximum weighted row sum is $9.9806$).

  Thus the space has $QB> 0$.
\subsubsection{The space $(E_7,\a_6)$}
\be
\begin{split}
\Delta_6^+(1)=&\{ a_4, a_5, a_8, a_9, a_{11},..,a_{14}, b_4, b_5, b_8, b_9, b_{11},..,b_{14}, c_5, c_6, d_3, d_4, d_6,..,d_9, \\
&d_{12},..,d_{15}, e_5, e_6, d_{17}, d_{18}\}\\
\Delta_6^+(2)=& \{a_{15}, b_{16}, c_1, c_2, c_3, c_4, d_1, d_2, d_5, d_{11} \} \\
\end{split}
\ee
$\Delta_6^+(k)=\emptyset$ for $k\ge 3.$
 \bee
\left\{
  \begin{array}{ll}
     \dim=42,\ \Ric=13 g , &   \\
    \text{\rm 4 largest eigenvalues of $M_1$ are 13.5, 13, 7.1504, 7.1504}.
  \end{array}
\right.
\eee
Thus the space does not have $QB\geq 0$.

\subsection{The spaces $(E_8,\a_i)$}

\subsubsection{Root system}
Let $V=\R^8$.  The positive roots in $V$ are $\pm \ve_i+\ve_j, 1\le i<j\le 8$ (total 56), and
$$
\frac12(\ve_8+\sum_{i=1}^7(-1)^{\nu(i)}\ve_i)
$$
so that $\sum_{i=1}^7 \nu(i)$ is even, i.e. the number of minus sign is even (total 21+35+8=64).  Let
\be
\begin{split}
A=&\left(
    \begin{array}{c}
      a_i
    \end{array}
  \right)_{i=1}^{28}=
 \left(
     \begin{array}{c}
       \ve_1+\ve_2 \\
        \ve_1+\ve_3 \\
      \ve_1+\ve_4 \\
      \ve_1+\ve_5 \\
      \ve_1+\ve_6 \\
      \ve_1+\ve_7 \\
      \ve_1+\ve_8 \\
       \ve_2+\ve_3 \\
       \ve_2+\ve_4 \\
       \ve_2+\ve_5\\
       \ve_2+\ve_6 \\
       \ve_2+\ve_7\\
       \ve_2+\ve_8 \\
       \ve_3+\ve_4 \\
       \ve_3+\ve_5 \\
       \ve_3+\ve_6 \\
       \ve_3+\ve_7\\
       \ve_3+\ve_8 \\
       \ve_4+\ve_5 \\
       \ve_4+\ve_6 \\
       \ve_4+\ve_7 \\
       \ve_4+\ve_8 \\
       \ve_5+\ve_6 \\
       \ve_5+\ve_7 \\
       \ve_5+\ve_8 \\
       \ve_6+\ve_7 \\
       \ve_6+\ve_8 \\
       \ve_7+\ve_8 \\
     \end{array}
   \right)\\
   \end{split},
\begin{split}
B=&\left(
    \begin{array}{c}
      b_i
    \end{array}
  \right)_{i=1}^{28}=
 \left(
     \begin{array}{c}
       -\ve_1+\ve_2 \\
        -\ve_1+\ve_3 \\
      -\ve_1+\ve_4 \\
      -\ve_1+\ve_5 \\
      -\ve_1+\ve_6 \\
      -\ve_1+\ve_7 \\
      -\ve_1+\ve_8 \\
       -\ve_2+\ve_3 \\
       -\ve_2+\ve_4 \\
       -\ve_2+\ve_5\\
       -\ve_2+\ve_6 \\
       -\ve_2+\ve_7\\
       -\ve_2+\ve_8 \\
       -\ve_3+\ve_4 \\
       -\ve_3+\ve_5 \\
       -\ve_3+\ve_6 \\
       -\ve_3+\ve_7\\
       -\ve_3+\ve_8 \\
       -\ve_4+\ve_5 \\
       -\ve_4+\ve_6 \\
      - \ve_4+\ve_7 \\
       -\ve_4+\ve_8 \\
       -\ve_5+\ve_6 \\
       -\ve_5+\ve_7 \\
       -\ve_5+\ve_8 \\
       -\ve_6+\ve_7 \\
       -\ve_6+\ve_8 \\
       -\ve_7+\ve_8 \\
     \end{array}
   \right)\\
   \end{split}
   \ee
\newpage

 \be
   \begin{split}
C=&\left(
    \begin{array}{c}
      c_i \\
    \end{array}
  \right)_{i=1}^{21}=
 \left(
     \begin{array}{c}
      \ve_8-\ve_1-\ve_2+\ve_3+\ve_4+\ve_5+\ve_6+\ve_7 \\
        \ve_8 -\ve_1+\ve_2-\ve_3+\ve_4+\ve_5+\ve_6+\ve_7  \\
      \ve_8 -\ve_1+\ve_2+\ve_3-\ve_4+\ve_5+\ve_6+\ve_7  \\
       \ve_8 -\ve_1+\ve_2+\ve_3+\ve_4-\ve_5+\ve_6+\ve_7  \\
       \ve_8 -\ve_1+\ve_2+\ve_3+\ve_4+\ve_5-\ve_6+\ve_7  \\
       \ve_8 -\ve_1+\ve_2+\ve_3+\ve_4+\ve_5+\ve_6-\ve_7  \\
     \ve_8+\ve_1-\ve_2-\ve_3+\ve_4+\ve_5+\ve_6+\ve_7  \\
        \ve_8 +\ve_1-\ve_2+\ve_3-\ve_4+\ve_5+\ve_6+\ve_7  \\
       \ve_8 +\ve_1-\ve_2+\ve_3+\ve_4-\ve_5+\ve_6+\ve_7 \\
      \ve_8 +\ve_1-\ve_2+\ve_3+\ve_4+\ve_5-\ve_6+\ve_7 \\
      \ve_8 +\ve_1-\ve_2+\ve_3+\ve_4+\ve_5+\ve_6-\ve_7 \\
       \ve_8 +\ve_1+\ve_2-\ve_3-\ve_4+\ve_5+\ve_6+\ve_7 \\
        \ve_8 +\ve_1+\ve_2-\ve_3+\ve_4-\ve_5+\ve_6+\ve_7  \\
        \ve_8 +\ve_1+\ve_2-\ve_3+\ve_4+\ve_5-\ve_6+\ve_7  \\
        \ve_8 +\ve_1+\ve_2+\ve_3+\ve_4+\ve_5 +\ve_6-\ve_7 \\
        \ve_8 +\ve_1+\ve_2+\ve_3-\ve_4-\ve_5 +\ve_6+\ve_7 \\
        \ve_8 +\ve_1+\ve_2+\ve_3-\ve_4+\ve_5 -\ve_6+\ve_7 \\
        \ve_8 +\ve_1+\ve_2+\ve_3-\ve_4+\ve_5 +\ve_6-\ve_7 \\
        \ve_8 +\ve_1+\ve_2+\ve_3+\ve_4-\ve_5 -\ve_6+\ve_7 \\
        \ve_8 +\ve_1+\ve_2+\ve_3+\ve_4-\ve_5 +\ve_6-\ve_7 \\
        \ve_8 +\ve_1+\ve_2+\ve_3+\ve_4+\ve_5 -\ve_6-\ve_7 \\
     \end{array}
   \right) \\
   \end{split}
   \ee

\newpage

 \be
   \begin{split}
D=&\left(
    \begin{array}{c}
      d_i
    \end{array}
 \right)_{i=1}^{35} =
 \frac12\left(
     \begin{array}{c}
        \ve_8-\ve_1-\ve_2-\ve_3-\ve_4+\ve_5+\ve_6+\ve_7  \\
        \ve_8-\ve_1-\ve_2-\ve_3+\ve_4-\ve_5+\ve_6+\ve_7  \\
        \ve_8-\ve_1-\ve_2-\ve_3+\ve_4+\ve_5-\ve_6+\ve_7  \\
        \ve_8-\ve_1-\ve_2-\ve_3+\ve_4+\ve_5+\ve_6-\ve_7  \\
      \ve_8 -\ve_1-\ve_2+\ve_3-\ve_4-\ve_5+\ve_6+\ve_7\\
      \ve_8 -\ve_1-\ve_2+\ve_3-\ve_4+\ve_5-\ve_6+\ve_7\\
      \ve_8 -\ve_1-\ve_2+\ve_3-\ve_4+\ve_5+\ve_6-\ve_7\\
       \ve_8 -\ve_1-\ve_2+\ve_3+\ve_4-\ve_5-\ve_6 +\ve_7 \\
        \ve_8 -\ve_1-\ve_2+\ve_3+\ve_4-\ve_5+\ve_6 -\ve_7 \\
        \ve_8 -\ve_1-\ve_2+\ve_3+\ve_4+\ve_5-\ve_6-\ve_7 \\
        \ve_8 -\ve_1+\ve_2-\ve_3-\ve_4-\ve_5+\ve_6+\ve_7  \\
        \ve_8 -\ve_1+\ve_2-\ve_3-\ve_4+\ve_5-\ve_6+\ve_7  \\
        \ve_8 -\ve_1+\ve_2-\ve_3-\ve_4+\ve_5+\ve_6-\ve_7  \\
        \ve_8 -\ve_1+\ve_2-\ve_3+\ve_4-\ve_5-\ve_6+\ve_7   \\
        \ve_8 -\ve_1+\ve_2-\ve_3+\ve_4-\ve_5+\ve_6-\ve_7   \\
        \ve_8 -\ve_1+\ve_2-\ve_3+\ve_4+\ve_5-\ve_6-\ve_7   \\
        \ve_8 -\ve_1+\ve_2+\ve_3-\ve_4-\ve_5-\ve_6 +\ve_7  \\
        \ve_8 -\ve_1+\ve_2+\ve_3-\ve_4-\ve_5+\ve_6 -\ve_7  \\
         \ve_8 -\ve_1+\ve_2+\ve_3-\ve_4+\ve_5-\ve_6-\ve_7   \\
  \ve_8 -\ve_1+\ve_2+\ve_3+\ve_4-\ve_5-\ve_6-\ve_7   \\
  \ve_8  +\ve_1-\ve_2-\ve_3-\ve_4-\ve_5+\ve_6 +\ve_7 \\
  \ve_8  +\ve_1-\ve_2-\ve_3-\ve_4+\ve_5-\ve_6 +\ve_7 \\
  \ve_8  +\ve_1-\ve_2-\ve_3-\ve_4+\ve_5+\ve_6 -\ve_7 \\
         \ve_8 +\ve_1-\ve_2-\ve_3+\ve_4-\ve_5-\ve_6+\ve_7  \\
         \ve_8 +\ve_1-\ve_2-\ve_3+\ve_4-\ve_5+\ve_6-\ve_7  \\
          \ve_8 +\ve_1-\ve_2-\ve_3+\ve_4+\ve_5-\ve_6 -\ve_7 \\
          \ve_8 +\ve_1-\ve_2+\ve_3-\ve_4-\ve_5-\ve_6 +\ve_7 \\
          \ve_8 +\ve_1-\ve_2+\ve_3-\ve_4-\ve_5+\ve_6 -\ve_7 \\
          \ve_8 +\ve_1-\ve_2+\ve_3-\ve_4+\ve_5-\ve_6 -\ve_7 \\
          \ve_8 +\ve_1-\ve_2+\ve_3+\ve_4-\ve_5-\ve_6 -\ve_7 \\
          \ve_8 +\ve_1+\ve_2-\ve_3-\ve_4-\ve_5-\ve_6+\ve_7 \\
          \ve_8 +\ve_1+\ve_2-\ve_3-\ve_4-\ve_5+\ve_6-\ve_7 \\
          \ve_8 +\ve_1+\ve_2-\ve_3-\ve_4+ \ve_5-\ve_6-\ve_7 \\
          \ve_8 +\ve_1+\ve_2-\ve_3+\ve_4-\ve_5-\ve_6-\ve_7 \\
           \ve_8 +\ve_1+\ve_2+\ve_3-\ve_4-\ve_5-\ve_6-\ve_7 \\
     \end{array}
  \right)  \\
   \end{split}
   \ee
\newpage

    \be
   \begin{split}
E=&\left(
    \begin{array}{c}
      e_i\\
    \end{array}
  \right)_{i=1}^8=
 \frac12\left(
     \begin{array}{c}
     \ve_8+\ve_1+\ve_2+\ve_3+\ve_4+\ve_5+\ve_6+\ve_7\\
      \ve_8+\ve_1-\ve_2-\ve_3-\ve_4-\ve_5-\ve_6-\ve_7\\
        \ve_8-\ve_1+\ve_2-\ve_3-\ve_4-\ve_5-\ve_6 -\ve_7\\
      \ve_8 -\ve_1-\ve_2+\ve_3-\ve_4-\ve_5-\ve_6-\ve_7 \\
       \ve_8 -\ve_1-\ve_2-\ve_3+\ve_4-\ve_5-\ve_6-\ve_7  \\
        \ve_8 -\ve_1-\ve_2-\ve_3-\ve_4+\ve_5-\ve_6 -\ve_7\\
        \ve_8  -\ve_1-\ve_2-\ve_3-\ve_4-\ve_5+\ve_6 -\ve_7\\
        \ve_8  -\ve_1-\ve_2-\ve_3-\ve_4-\ve_5-\ve_6 +\ve_7
     \end{array}
   \right)\\
   \end{split}
   \ee

   Simple positive roots are: $\a_1=e_2, \a_2=a_1, \a_3=b_1,\a_4=b_8, \a_5=b_{14}, \a_6=b_{19}, \a_7=b_{23}, \a_8=b_{26}.$  The matrix for $(\a_i )$  is:
   \be
   g= \left(
      \begin{array}{cccccccc}
        1/2 & -1/2 & -1/2 &-1/2 & -1/2&-1/2   &-1/2 & 1/2\\
        1 & 1 & 0 & 0 & 0 & 0 & 0 & 0 \\
        -1 & 1 & 0 & 0 & 0 & 0 & 0 & 0 \\
        0 & -1 & 1 & 0 & 0 & 0 & 0 & 0 \\
        0 & 0 & -1 & 1 & 0 & 0 & 0 & 0 \\
        0 & 0 & 0 & -1 & 1 & 0 & 0 & 0 \\
        0 & 0 & 0 & 0 & -1 & 1 & 0 & 0 \\
        0 & 0 & 0 & 0 & 0 & -1 & 1 & 0 \\
      \end{array}
   \right)
   \ee
\vspace{100pt}

The coordinates of $(a_i)$ relative to the ordered basis $\{\a_1,\dots,\a_8\}$ are given by the rows of    \be\lf((gg^t)^{-1}gA^t\ri)^t=
   \left(
     \begin{array}{cccccccc}
         0 &   1 &  0   &0 &  0&     0 &        0      &   0\\
         0  &  1     &     0&1 &0&0&0&0\\
         0&1 &0&1 &1 &0&0&0\\
    0 &1 &0 &1 &1 &1 &0&0\\
   0  &1 &    0  &1 &1 &1 &1 &0\\
   0  &1  &   0 &1 &1 &1 &1 &1 \\
    2 &3 &3 &5 &4 &3 &2 &1 \\
         0&1 &1 &1 &0&0&0&0\\
         0&1 &1 &1 &1 &0&0&0\\
    0 &1 &1 &1 &1 &1 &0&0\\
   0  &1 &1 &1 &1 &1 &1 &0\\
   0  &1 &1 &1 &1 &1 &1 &1 \\
 2 &3 &4 &5 &4 &3 &2 &1 \\
 0&1 &1 &2 &1 &0&0&0\\
 0 &1 &1 &2 &1 &1 &0&0\\
   0  &1 &1 &2 &1 &1 &1 &0\\
   0  &1 &1 &2 &1 &1 &1 &1 \\
 2 &3 &4 &6 &4 &3 &2 &1 \\
 0 &1 &1 &2 &2 &1 &0&0\\
   0  &1 &1 &2 &2 &1 &1 &0\\
   0  &1 &1 &2 &2 &1 &1 &1 \\
 2 &3 &4 &6 &5 &3 &2 &1 \\
 0 &1 &1 &2 &2 &2 &1 &0\\
 0 &1 &1 &2 &2 &2 &1 &1 \\
 2 &3 &4 &6 &5 &4 &2 &1 \\
   0  &1 &1 &2 &2 &2 &2 &1 \\
 2 &3 &4 &6 &5 &4 &3 &1 \\
 2 &3 &4 &6 &5 &4 &3 &2 \\
  \end{array}
   \right)
   \ee
\vspace{100pt}

   The coordinates of $(b_i)$ relative the ordered basis $\{\a_1,\dots,\a_8\}$ are given by the rows of   \be\lf((gg^t)^{-1}gB^t\ri)^t=
 \left(
  \begin{array}{cccccccc}
 0 &  0  &1  &  0  &0&0&0&0\\
  0&0&1 &1 &0&0&0&0\\
  0&0&1 &1 &1 &0&0&0\\
 0 &0 &1 &1 &1 &1 &0&0\\
   0  &   0  &1 &1 &1 &1 &1 &0\\
   0  &   0  &1 &1 &1 &1 &1 &1 \\
 2 &2 &4 &5 &4 &3 &2 &1 \\
  0&0 &0&1 &0&0&0&0\\
  0&0 &0&1 &1 &0&0&0\\
 0 &0 &0 &1 &1 &1 &0&0\\
   0  &0  &  0  &1 &1 &1 &1 &0\\
   0  &0   & 0  &1 &1 &1 &1 &1 \\
 2 &2 &3 &5 &4 &3 &2 &1 \\
  0&0&0&0&1 &0&0&0\\
 0 &0 &0 &0 &1 &1 &0&0\\
   0  &   0  &   0  &   0  &1 &1 &1 &0\\
   0  &   0   &  0   &  0  &1 &1 &1 &1 \\
 2 &2 &3 &4 &4 &3 &2 &1 \\
 0 &0 &0 &0 &   0  &1 &0&0\\
   0  &   0 &    0 &    0 &    0  &1 &1 &0\\
   0  &   0   &  0   &  0   &  0  &1 &1 &1 \\
 2 &2 &3 &4 &3 &3 &2 &1 \\
   0  &   0  &   0 &    0 &    0 &    0  &1 &0\\
   0  &   0   &  0   &  0  &   0  &   0  &1 &1 \\
2 &2 &3 &4 &3 &2 &2 &1 \\
 0&0&0&0&0&0  & 0  &1 \\
2 &2 &3 &4 &3 &2 &1 &1 \\
2 &2 &3 &4 &3 &2 &1  &  0  \\
  \end{array}
\right)
\ee
\vspace{100pt}

The coordinates of $(c_i)$ relative the ordered basis $\{\a_1,\dots,\a_8\}$ are given by the rows of   \be\lf((gg^t)^{-1}gC^t\ri)^t=
 \left(
  \begin{array}{cccccccc}
 1 &2 &3 &5 &4 &3 &2 &1\\
 1 &2 &3 &4 &4 &3 &2 &1\\
 1 &2 &3 &4 &3 &3 &2 &1\\
 1 &2 &3 &4 &3 &2 &2 &1\\
 1 &2 &3 &4 &3 &2 &1 &1\\
 1 &2 &3 &4 &3 &2 &1  &  0\\
 1 &2 &2 &4 &4 &3 &2 &1\\
 1 &2 &2 &4 &3 &3 &2 &1\\
 1 &2 &2 &4 &3 &2 &2 &1\\
1 &2 &2 &4 &3 &2 &1 &1\\
1 &2 &2 &4 &3 &2 &1  &  0\\
1 &2 &2 &3 &3 &3 &2 &1\\
1 &2 &2 &3 &3 &2 &2 &1\\
1 &2 &2 &3 &3 &2 &1 &1\\
1 &2 &2 &3 &3 &2 &1 &   0\\
1 &2 &2 &3 &2 &2 &2 &1\\
1 &2 &2 &3 &2 &2 &1 &1\\
1 &2 &2 &3 &2 &2 &1 &   0\\
1 &2 &2 &3 &2 &1 &1 &1\\
1 &2 &2 &3 &2 &1 &1  &  0\\
1 &2 &2 &3 &2 &1   & 0 &    0\\
  \end{array}
\right)
\ee
\vspace{100pt}

The coordinates of $(d_i)$ relative to the ordered basis $\{\a_1,\dots,\a_8\}$ are given by the rows of
\be\lf((gg^t)^{-1}gD^t\ri)^t=
 \left(
  \begin{array}{cccccccc}
    1&1&2&3&3&3&2&1\\
    1&1&2&3&3&2&2&1\\
    1&1&2&3&3&2&1&1\\
    1&1&2&3&3&2&1&0\\
    1&1&2&3&2&2&2&1\\
    1&1&2&3&2&2&1&1\\
    1&1&2&3&2&2&1 &0\\
    1&1&2&3&2&1&1&1\\
    1&1&2&3&2&1&1&0\\
    1&1&2&3&2&1&0&0\\
    1&1&2&2&2&2&2&1\\
    1&1&2&2&2&2&1&1\\
    1&1&2&2&2&2&1&0\\
    1&1&2&2&2&1&1&1\\
    1&1&2&2&2&1&1&0\\
    1&1&2&2&2&1&0&0\\
    1&1&2&2&1&1&1&1\\
    1&1&2&2&1&1&1&0\\
    1&1&2&2&1&1&0&0\\
    1&1&2&2&1&0&0&0\\
    1&1&1&2&2&2&2&1\\
    1&1&1&2&2&2&1&1\\
    1&1&1&2&2&2&1&0\\
    1&1&1&2&2&1&1&1\\
    1&1&1&2&2&1&1&0\\
    1&1&1&2&2&1&0&0\\
    1&1&1&2&1&1&1&1\\
    1&1&1&2&1&1&1&0\\
    1&1&1&2&1&1&0&0\\
    1&1&1&2&1&0&0&0\\
    1&1&1&1&1&1&1&1\\
    1&1&1&1&1&1&1&0\\
    1&1&1&1&1&1&0&0\\
    1&1&1&1&1&0&0&0\\
    1&1&1&1&0&0&0&0 \\
  \end{array}
\right)
\ee
\vspace{50pt}

The coordinates of $(e_i)$ relative the ordered basis $\{\a_1,\dots,\a_8\}$ are given by the rows of
\be\lf((gg^t)^{-1}gE^t\ri)^t=
 \left(
  \begin{array}{cccccccc}
    1  &  3&3&5&4&3&2&1\\
1& 0 &  0&0  & 0&0  & 0  & 0\\
 1   &0&1 &  0  & 0&0&0&0\\
 1&0&1&1&0&0&0&0\\
 1&0&1&1&1&0&0&0\\
 1&0&1&1&1&1&0&0\\
 1&0&1&1&1&1&1&0\\
 1&0&1&1&1&1&1&1
  \end{array}
\right)
\ee

Since $\a$ is a root implies that $|\a|^2=2$, Lemma \ref{l-N-E} is still true in this case.

\subsubsection{The space $(E_8,\a_1)$}

\be
\begin{split}
\Delta_1^+(1)=&\{ c_1,\dots,c_{21}; d_1,\dots,d_{35}; e_1,\dots,e_8\}\\
\Delta_1^+(2)= &\{a_7, a_{13}, a_{18},  a_{22}, a_{25},a_{27}, a_{28}; b_7, b_{13}, b_{18},  b_{22}, b_{25},b_{27}, b_{28}\}\\
\end{split}
\ee
$\Delta^+_1(k)=\emptyset$ for $k\ge 3.$

\bee
\left\{
  \begin{array}{ll}
    \dim=78,\ \Ric=23 g, &   \\
    \text{\rm 4 largest eigenvalues of $M_1$ are 14.1102,
   14.1102,
   14.1102, 23}, & \\
\text{\rm eigenvalues of $M_2$ are at most $14.5$ (using Lemma \ref{l-eigenvalue})}.
  \end{array}
\right.
\eee

 Thus the space has $QB> 0$.

 \subsubsection{The space $(E_8,\a_2)$}
 \be
\begin{split}
\Delta_2^+(1)=&\{ a_1,..., a_6, a_8,...,, a_{12},  a_{14},...,, a_{17}, a_{19},...,a_{21}, a_{23}, a_{24},a_{26}; d_1,\dots,d_{35}\}\\
\Delta_2^+(2)=&   \{b_7, b_{13}, b_{18},  b_{22}, b_{25},b_{27}, b_{28}, c_1,...,c_{21}\}\\
\Delta_2^+(3)=&\{a_7, a_{13}, a_{18},  a_{22}, a_{25},a_{27}, a_{28}; e_1\}\\
\end{split}
\ee
$\Delta^+_2(k)=\emptyset$ for $k\ge 4.$
\bee
\left\{
  \begin{array}{ll}
    \dim=92,\ \Ric=17g, &   \\
    \text{\rm 4 largest eigenvalues of $M_1$ are 13.8336,
   13.8336,
   13.8336,
   17.0000}, & \\
\text{\rm eigenvalues of $M_2$ are at most $15$ (using Lemma \ref{l-eigenvalue})}.
  \end{array}
\right.
\eee
Thus the space has $QB> 0$.

\subsubsection{The space $(E_8,\a_3)$}

\be
\begin{split}
\Delta_3^+(1)=&\{  a_8,..., a_{12},  a_{14},..., a_{17}, a_{19},...,a_{21}, a_{23}, a_{24},a_{26}; b_1,\dots, b_6;\\
&  d_{21},...,d_{35}; e_3,..., e_8\}\\
\Delta_3^+(2)= & \{c_7,...,c_{21};d_1,...,d_{20}\}\\
\Delta_3^+(3)= &\{a_7;   b_{13}, b_{18},  b_{22}, b_{25},b_{27}, b_{28}; c_1,...,c_6; e_1\}\\
\Delta_3^+(4)= & \{a_{13}, a_{18},  a_{22}, a_{25},a_{27}, a_{28}; b_7\}.
\end{split}
\ee
$\Delta^+_3(k)=\emptyset$ for $k\ge 5.$
\bee
\left\{
  \begin{array}{ll}
    \dim=98,\ \Ric=13g, &   \\
    \text{\rm 4 largest eigenvalues of $M_1$ are 11.3117,
   11.3117,
   13.0000,
   16.9627.}.
  \end{array}
\right.
\eee
Thus the space   does not have $QB\ge0$.

\subsubsection{The space $(E_8,\a_4)$}

\be
\begin{split}
\Delta_4^+(1)=&\{ a_2,..., a_{6},  a_{8},..., a_{12}; b_2,...,b_6; b_8,...,b_{12}; d_{31},..., d_{35}; e_4,..., e_8\}\\
\Delta_4^+(2)= &\{a_{14},..., a_{17}, a_{19},..., a_{21},a_{23}, a_{24},  a_{26}; d_{11},..., d_{30}\}\\
\Delta_4^+(3)=
&\{c_{12},...,c_{21}; d_1,...,d_{10}\}\\
\Delta_4^+(4)= &\{b_{18}, b_{22}, b_{25}, b_{27}, b_{28}; c_2,...,c_{11}\}\\
\Delta_4^+(5)= &\{a_7, a_{13}, b_7, b_{13}, c_1,e_1\}\\
\Delta_4^+(6)= &\{a_{18}, a_{22}, a_{25}, a_{27}, a_{28}\}.
\end{split}
\ee
$\Delta^+_4(k)=\emptyset$ for $k\ge 7.$
\bee
\left\{
  \begin{array}{ll}
   \dim=106,\ \Ric=9g, &   \\
    \text{\rm 4 largest eigenvalues of $M_1$ are 9.0798,
   11.2147,
   11.2147,
   12.6168}.
  \end{array}
\right.
\eee
 Thus the   space   does not have $QB\ge0$.

\subsubsection{The space $(E_8,\a_5)$}
\be
\begin{split}
\Delta_5^+(1)=&\{a_3,....,a_6, a_9,...,a_{12},  a_{14},...,a_{17}\\
&b_3,..., b_6,  b_9,...,b_{12}, b_{14},...,b_{17};d_{17},...,d_{20},d_{27},...,d_{34}, e_5,...,e_8\}\\
\Delta_5^+(2)=& \{a_{19},..., a_{21}, a_{23},a_{24},a_{26};c_{16},...,c_{21}\\
&d_5,...,d_{16}, d_{21},...,d_{26}\}\\
\Delta_5^+(3)=&\{b_{22},b_{25},b_{27},b_{28}; c_3,...,c_6, c_8,...,c_{15},d_1,d_2,d_3,d_4\}\\
\Delta_5^+(4)=&\{ a_7, a_{13}, a_{18};  b_{7},b_{13},b_{18}, c_1, c_2,c_7;e_1\}\\
\Delta_5^+(5)=&\{a_{22}, a_{25}, a_{27},a_{28}\}.
\end{split}
\ee
$\Delta^+_5(k)=\emptyset$ for $k\ge 6.$
\bee
\left\{
  \begin{array}{ll}
    \dim=104,\ \Ric=11g, &   \\
    \text{\rm 4 largest eigenvalues of $M_1$ are 11.5575,
   12.0012,
   12.0012,
   12.0012}.
  \end{array}
\right.
\eee

Thus  the space does not have $QB\ge0$.

\subsubsection{The space $(E_8,\a_6)$}
\be
\begin{split}
\Delta_6^+(1)=& \{a_4,..., a_6, a_{10},..., a_{12},  a_{15},..., a_{17}, a_{19},..., a_{21};\\
&b_4,...,b_6, b_{10},..., b_{12},b_{15},..., b_{17}, b_{19},...,b_{21}; c_{19},..., c_{21}\\
& d_8 ,...,d_{10}, d_{14},...,d_{19}, d_{24},...,d_{29}, d_{31},..., d_{33},e_6,...,e_8\}\\
\Delta_6^+(2)=& \{a_{23}, a_{24}, a_{26}; b_{25}, b_{27},b_{28}; c_4,..., c_6, c_9,...,  c_{11},   c_{13},...,c_{18};\\
& d_2,...,d_7, d_{11},..., d_{13}, d_{21},..., d_{23}\}\\
\Delta_6^+(3)=&\{ a_7, a_{13},a_{18}, a_{22}, b_7, b_{13}, b_{18}, b_{22}; c_1,..., c_3 , c_7 , c_8 , c_{12},d_1,e_1\}\\
\Delta_6^+(4)=&\{ a_{25}, a_{27}, a_{28}\}
\end{split}
\ee
$\Delta^+_6(k)=\emptyset$ for $k\ge 5.$
\bee
\left\{
  \begin{array}{ll}
    \dim=97,\ \Ric=14g, &   \\
    \text{\rm 4 largest eigenvalues of $M_1$ are 11.4257,
   14.0000,
   16.0721,
   16.0721}.
  \end{array}
\right.
\eee
Thus the space  does not satisfy $QB\ge0$.

\subsubsection{The space $(E_8,\a_7)$}
\be
\begin{split}
\Delta_7^+(1)=& \{a_5,a_6,a_{11},a_{12},   a_{16},a_{17},a_{20}, a_{21};a_{23}, a_{24};\\
&b_5, b_6, b_{11}, b_{12}, b_{16}, b_{17}, b_{20}, b_{21}, b_{23}, b_{24}, b_{27}, b_{28};\\
&  c_5 , c_6 , c_{10} , c_{11} , c_{14} , c_{15}, c_{17},...,c_{20};\\
& d_3 , d_4 , d_6,...,d_9 , d_{12},...,d_{15} , d_{17} , d_{18}, d_{22},...,d_{25} d_{27} d_{28}, d_{31},d_{32},e_7,e_8\}\\
\Delta_7^+(2)=&\{a_7, a_{13}, a_{18},  a_{22}, a_{25}, a_{26}; b_{7}, b_{13}, b_{18}, b_{22},b_{ 25};\\
&c_1,...,c_4, , c_7 , c_8 , c_9 , c_{12} , c_{13}, c_{16}; d_1, d_2 , d_5, d_{11}, d_{21},e_1\}\\
\Delta_7^+(3)=&\{a_{27}, a_{28}\}
\end{split}
\ee
$\Delta^+_7(k)=\emptyset$ for $k\ge 4.$
\bee
\left\{
  \begin{array}{ll}
    \dim=83,\ \Ric=19g, &   \\
    \text{\rm 4 largest eigenvalues of $M_1$ are 11.4093,
   11.4093,
   19.0000,
   22.1376}.
  \end{array}
\right.
\eee
  Thus the space    does not satisfy $QB\ge0$.

\subsubsection{The space $(E_8,\a_8)$}
\be
\begin{split}
\Delta_8^+(1)= &\{a_6, a_7, a_{12}, a_{13}, a_{17}, a_{18}, a_{21},a_{22}, a_{24},a_{25}, a_{26}, a_{27};\\
 &b_{6}, b_{7}, b_{12}, b_{13}, b_{17}, b_{18}, b_{21}, b_{22}, b_{24}, b_{25}, b_{26},  b_{27}\\
 &c_1,...,c_5, c_7,...,c_{10}, c_{12},..., c_{14}, c_{16} , c_{17} , c_{19}\\
 &d_1,...,d_3, d_5 , d_6 , d_8 , d_{11} , d_{12} , d_{14}, d_{17} , d_{21} , d_{22} , d_{24} , d_{27} , d_{31},e_1,e_8\}\\
\Delta_8^+(2)=&\{ a_{28}\}
\end{split}
\ee
$\Delta^+_8(k)=\emptyset$ for $k\ge 3.$
\bee
\left\{
  \begin{array}{ll}
    \dim=57, \ \Ric=29g, &   \\
    \text{\rm 4 largest eigenvalues of $M_1$ are 11.5000,
   11.5000,
   11.5000,
   29.0000}, & \\
\text{\rm eigenvalues of $M_2$ are at most $11.5$ (using Lemma \ref{l-eigenvalue})}.
  \end{array}
\right.
\eee
  Thus the space has $QB> 0$.

\section{Appendix}\label{appendix}

  We illustrate how we initialize the \K C-space $(G_2,\a_2)$ then calculate bisectional curvatures and estimate the eigenvalues of $M_2$ in MAPLE.   The main formulas used to calculate the curvatures will apply to all the other cases.  Actual  MAPLE code below will be italicized.

\subsection{Initializing $(G_2,\a_2)$} We begin by initializing the root system.

1.      Begin by defining the positive root system in $\R^3$, then define $S$ as the set of all positive and negative roots:
       \vspace{10pt}\vspace{10pt}
\begin{it}

\noindent $>$ a1:=[1,-1,0];  a2:=[-1,0,1]; a3:=[0,-1,1]; b1:=[-2,1,1]; b2:=[1,-2,1]; b3:=[-1,-1,2];

\noindent $>$ S:=\{a1,...,b1,...,-a1,... ,-b1,...\}

 \end{it}
 \vspace{10pt}\vspace{10pt}

2.  By expressing the positive roots in terms of the simple positive roots, identify the corresponding sets $\Delta_2^+(i)$ which in this case are just $\Delta_2^+(1)$ and $\Delta_2^+(2)$ (appearing as $m1, m2$ below).  Refer to the ordered elements of $\Delta_2^+$ by either $C(i)$ or $c(i)$ below, depending on how they are used later on.  The function $g(i)$ is $1,2$ depending on whether $C(i)$ is in $m1$ or  $m2$ respectively.

     \vspace{10pt}\vspace{10pt}
\begin{it}

\noindent $>$ m1:=\{a2,a3,b1,b2\};

\noindent $>$ m2 := \{b3\};

\noindent $>$ A := Matrix([a2, a3, b1, b2, b3]);

\noindent $>$ C:=i $->$  convert(Row(A,i), list)

\noindent $>$ c:=i $->$  Row(A,i)

\noindent $>$ g:=i $->$

if evalb(C(i) in m1) then 1

elif evalb(C(i) in m2) then 2 else 0 end if;

 \end{it}
 \vspace{10pt}\vspace{10pt}

\subsection{bisectional curvature formula and matrix}

 Here we compute the matrix $M1$ using Lemma \ref{l-curvature-formula-2}.

1. We first need to define some basic functions appearing in Lemma \ref{l-curvature-formula-2}.  Below $N(i,j)$ calculates $N_{C(i), C(j)}$ while $T(i,j)N(i,j)$ calculates $\tilde{N}_{C(i), C(j)}$.  The function $Nm(i,j)$ calculates $N_{C(i), -C(j)}$ while $Tm(i,j)Nm(i,j)$ calculates $\tilde{N}_{C(i), -C(j)}$ and the MAPLE codes for these are similar.

       \vspace{10pt}\vspace{10pt}
\begin{it}

\noindent $>$ N:= (i,j) $->$

if evalb(C(i)+C(j) in S and C(i)-2*C(j) in S) then 3

elif evalb(C(i)+C(j) in S and C(i)-C(j) in S) then 2

elif evalb(C(i)+C(j) in S) then 1

else 0 end if;
\vspace{10pt}

\noindent $>$ T:= (i,j) $->$

 if evalb(not(N(i,j)=0))

 then sqrt(c(i).c(i))*sqrt(c(i).c(i))/sqrt((c(i)+c(j)).(c(i)+c(j)))

  else 0 end if;

\vspace{10pt}

  \end{it}
 \vspace{10pt}\vspace{10pt}

2. The matrix for bisectional curvature is given by

       \vspace{10pt}\vspace{10pt}
\begin{it}

\noindent $>$ B:= Matrix(5,5, (i,j) $->$
\vspace{10pt}

 \noindent  if $evalb(i\leq j)$ then

\noindent   1/g(j)*(c(i).c(j)+(1/2)*(g(i)/(g(i)+g(j)))*N(i,j)$^2$*T(i,j)$^2$  )
 \vspace{10pt}

  \noindent  else

\noindent   1/g(i)*(c(j).c(i)+(1/2)*(g(j)/(g(j)+g(i)))*N(j,i)$^2$*T(j,i)$^2$  )
 \vspace{10pt}
end if);
\end{it}

\subsection{general curvature formula}

 First we want to compute general curvatures $R(\alpha \b \beta, \gamma, \b \delta)$ where $\alpha \neq \beta$, $\gamma \neq \delta$ and where $\delta=\alpha-\beta+\gamma$ which we may assume by Lemma \ref{l-curvature-formula-3}.    In the rest of the formulas in this subsection, we identify a triple index $(i,j,k)$ with the quadruple of roots $$\alpha=C(k), \beta=C(i),\gamma=C(j),\delta=C(k)-C(i)+C(j).$$  in particular, given any $(i,j,k)$ we always have  $\delta=\alpha-\beta+\gamma$.

 1. The function $allroots(i,j,k)$ returns $true$ or $false$ depending on whether $\delta$ is a root or not.  Moreover, $gd(i,j,k)=l$ if $\delta$ is in $ml$ and is zero otherwise, while $D(i,j,k)=l$ if $\delta=C(l)$ and is zero otherwise.

           \vspace{10pt}\vspace{10pt}
\begin{it}

\noindent $>$ allroots:=(i,j,k) $->$ evalb(C(k)-C(i)+C(j) in (m1 union m2 union m3))
\vspace{10pt}

\noindent $>$ gd:=(i,j,k) $->$

if C(k)-C(i)+C(j) in m1 then 1

elif C(k)-C(i)+C(j) in m2 then 2

.

.

.

else 0;

\noindent $>$ D:=(i,j,k) $->$

if C(k)-C(i)+C(j) =C(1) then 1

elif C(k)-C(i)+C(j) =C(2) then 2

.

.

.

else 0 end if;

\vspace{10pt}

\vspace{10pt}

 \vspace{10pt}\vspace{10pt}

 \end{it}
2.   Now we define the coefficient functions used in Lemma \ref{l-curvature-formula-3}.  Below, $xi(j,k)$ for example is 1 if  $j<k$ and zero otherwise.

         \vspace{10pt}\vspace{10pt}
\begin{it}

\noindent $>$  xi := (k,j) $->$ if $j < k$ then 1 else 0 end if;

\noindent $>$  delta :=  (k,j) $->$  if $k = j$ then 1 else 0 end if;

\noindent $>$  coeff1 :=  (i,j,k,l) $->$ (k-j)*xi(k, j)-k*l/(i+k);

\noindent  $>$  coeff2 :=   (i,j,k,l) $->$ k*xi(i, j)+l*xi(j, i)+l*delta(i, j)*delta(k, l)+(j-k)*xi(k, j);

\vspace{10pt}
\end{it}
 \vspace{10pt}\vspace{10pt}

3.  Finally, given $(i,j,k)$ we apply the formulas above and Lemma \ref{l-curvature-formula-3} to calculate curvature associated to $\alpha=C(k), \beta=C(i),\gamma=C(j),\delta=C(k)-C(i)+C(j).$  When $\alpha\neq \beta$ and $\delta$ is a root then $Rest(i,j,k)$ is the upper estimate for $|R(\alpha, \b \beta, \gamma, \b \delta)|$ obtained by replacing the $N's$ and their coefficients in Lemma \ref{l-curvature-formula-3} by their absolute values.  Otherwise $Rest(i,j,k)=0$.

           \vspace{10pt}\vspace{10pt}
\begin{it}
  \noindent $>$ Rest:=(i,j,k) $->$  \vspace{10pt}
  \noindent

  if (allroots(i,j,k) and not(C(k) = C(i))  then
  \vspace{10pt}
  \noindent

(1/2)*(1/(sqrt(g(k))*sqrt(g(i))*sqrt(g(j))*sqrt(gd(i, j, k)))*
 \vspace{10pt}

($|$(coeff1(g(k), g(i), g(j), gd(i, j, k))$|$*

T(k,j)*N(k,j)*T(i,D(i,j,k))*N(i,D(i,j,k))

+

$|$(coeff2(g(k), g(i), g(j), gd(i, j, k))$|$

*Tm(k,i)*Nm(k,i)*Tm(j,D(i,j,k))*Nm(j,D(i,j,k)))
 \vspace{10pt}

else 0 end if;
 \end{it}

  \vspace{10pt}

  \vspace{10pt}

\subsection{matrix of non-bisectional curvatures}
Now we calculate the $25\times 25$ matrix $Z$ as defined in \eqref{Z}.

 1. First we all ordered pairs of the form $(C(i), C(j))$ in a list of length $25$ using the two commands below.  For example, $LIST[1]$ returns the pair $[C(1), C(1)]$ while $LIST[1][2]$ corresponds to the second component, $C(1)$, of  $LIST[1]$.

  \vspace{10pt}

\begin{it}
 \noindent $>$  AA:= Matrix(5,5, (i,j) $->$ [C(i), C(j)]

   \vspace{10pt}

 \noindent $>$ LIST:= convert(AA, list)
 \end{it}
   \vspace{10pt}\vspace{10pt}

 2.  Below, $l1(i)=j$ provided $LIST[i][1]=C(j)$.  Similarly, $l2(i)=j$ provided $LIST[i][2]=C(j)$ and its MAPLE code is similar.

  \vspace{10pt}

\begin{it}
 \noindent $>$  l1:= i $->$

 \noindent $>$ if LIST[i][1]=C(1) then 1

  \noindent $>$ if LIST[i][1]=C(2) then 2
   \vspace{10pt}

   .

   .

   .
\vspace{10pt}

\end{it}

 3.  Now we calculate the matrix $Z$  as defined in \eqref{Z}.   Associate  $0\leq i,j \leq 25$ to the quadruple $[A,B,C,D]=[LIST[i][1], LIST[i][2],$ $ LIST[j][1], LIST[j][2]]$.  Now if $A\neq B$, $A=D$ and $B=C$, then $Z_{ij}=|R(ABBA)|$.  If not in the previous case and $A\neq B$ and $A-B=D-C$ then $Z_{ij}$ is the upper estimate for $|R(A,B,C,D)|$ obtained by replacing the $N's$ and their coefficients in Lemma \ref{l-curvature-formula-3} by their absolute values (the minimum appearing in the formula below is justified by the curvature identity $R(A,B,C,D)=R(C,B,A,D)$).  If not in the previous cases then $Z_{ij}$ is zero.

  \vspace{10pt}

\begin{it}
 \noindent $>$  Z:= Matrix(25,25, (i,j) $->$

  \noindent if (evalb(not(LIST[i][1]=LIST[i][2]))  and evalb(LIST[i][2]=LIST[j][1])

  and  evalb(LIST[i][1]=LIST[j][2]))

  \noindent then $|$B[l2(i), l1(i)]$|$

   \vspace{10pt}
  \noindent elif (evalb(not(LIST[i][1]=LIST[i][2]))

   and evalb(LIST[i][1]-LIST[i][2]=LIST[j][2]-LIST[j][1])

  \noindent then evalf(min(Rest(l2(i), l1(j), l1(i)),Rest(l2(i), l1(i), l1(j))))
  \vspace{10pt}

  \noindent else 0 end if);

 \end{it}
   \vspace{10pt}\vspace{10pt}

 Now for the matrix $Z$, the weighted row sums in \eqref{3} of Lemma \ref{weightedrowsumsZ} are given by $S0, S1, S2,..$ in the following commands.
    \vspace{10pt}\vspace{10pt}

\begin{it}

    \noindent $>$b0:=Matrix((25,1), (i,j) $->$1):

       \noindent $>$S0:=max(Z.b0)
   \vspace{10pt}

  \noindent $>$v1:=1/9*Z.b0

    \noindent $>$b1:=Matrix((25,1), (i,j) $->$ min(1, v1[i,1])):

       \noindent $>$S1:=max(Z.b1)
  \vspace{10pt}

  \noindent $>$v2:=1/9* Z.b1

    \noindent $>$b2:=Matrix((25,1), (i,j) $->$ min(1, v2[i,1])):

       \noindent $>$S2:=max(Z.b2)

 .

 .

.

 \end{it}

\subsection{The matrices $B$, $Z$ for $(G_2, \alpha_2)$}
 Below, we give the matrix $B$ of bisectional curvatures and the matrix $Z$ as calculated in MAPLE using the above commands.  For the $25\times 25$ matrix $Z_{AB, CD}$, the pairs $AB$ and $CD$ are ordered into a list of 25 elements as:
 $(C(1), C(1))$, $(C(2), C(1))$, $(C(3), C(1)),...etc.$
 The matrix $Z_1$ gives the first 13 columns of $Z$ while $Z_2$ gives the  next 12.
\vspace{50pt}

$$B= \left[ \begin {array}{ccccc} 2&5/2&3&0&3/2\\ \noalign{\medskip}5/2&2&0
&3&3/2\\ \noalign{\medskip}3&0&6&-3/2&3/2\\ \noalign{\medskip}0&3&-3/2
&6&3/2\\ \noalign{\medskip}3/2&3/2&3/2&3/2&3\end {array} \right]
$$

\newpage

$$Z_1=   \left[ \begin {array}{ccccccccccccc} 0&0&0&0&0&0&0&0&0&0&0&0&0
\\ \noalign{\medskip}0&0&\sqrt {2}\sqrt {6}&0&0&5/2&0&0&0&0&0&0&0
\\ \noalign{\medskip}0&\sqrt {2}\sqrt {6}&0&0&0&0&0&0&3/2&0&3&0&0
\\ \noalign{\medskip}0&0&0&0&0&0&0&3/2&0&0&0&0&0\\ \noalign{\medskip}0
&0&0&0&0&0&0&0&0&0&0&0&0\\ \noalign{\medskip}0&5/2&0&0&0&0&0&0&\sqrt {
2}\sqrt {6}&0&\sqrt {2}\sqrt {6}&0&0\\ \noalign{\medskip}0&0&0&0&0&0&0
&0&0&0&0&0&0\\ \noalign{\medskip}0&0&0&3/2&0&0&0&0&0&0&0&0&0
\\ \noalign{\medskip}0&0&3/2&0&0&\sqrt {2}\sqrt {6}&0&0&0&0&0&0&0
\\ \noalign{\medskip}0&0&0&0&0&0&0&0&0&0&0&0&0\\ \noalign{\medskip}0&0
&3&0&0&\sqrt {2}\sqrt {6}&0&0&0&0&0&0&0\\ \noalign{\medskip}0&0&0&0&0&0
&0&0&0&0&0&0&0\\ \noalign{\medskip}0&0&0&0&0&0&0&0&0&0&0&0&0
\\ \noalign{\medskip}0&0&0&0&0&0&0&0&0&0&0&0&0\\ \noalign{\medskip}0&0
&0&0&0&0&0&0&0&0&0&0&0\\ \noalign{\medskip}0&0&0&0&0&0&0&0&0&0&0&3/2&0
\\ \noalign{\medskip}0&\sqrt {2}\sqrt {6}&0&0&0&0&0&0&3&0&3/2&0&0
\\ \noalign{\medskip}0&0&0&0&0&0&0&0&0&0&0&0&0\\ \noalign{\medskip}0&0
&0&0&0&0&0&0&0&0&0&0&0\\ \noalign{\medskip}0&0&0&0&0&0&0&0&0&0&0&0&0
\\ \noalign{\medskip}0&0&0&0&3/2&0&0&0&0&0&0&0&0\\ \noalign{\medskip}0
&0&0&0&0&0&0&0&0&3/2&0&0&0\\ \noalign{\medskip}0&0&0&0&0&0&0&0&0&0&0&0
&0\\ \noalign{\medskip}0&0&0&0&0&0&0&0&0&0&0&0&0\\ \noalign{\medskip}0
&0&0&0&0&0&0&0&0&0&0&0&0\end {array} \right] $$

\newpage

$$
Z_2= \left[ \begin {array}{cccccccccccc} 0&0&0&0&0&0&0&0&0&0&0&0
\\ \noalign{\medskip}0&0&0&\sqrt {2}\sqrt {6}&0&0&0&0&0&0&0&0
\\ \noalign{\medskip}0&0&0&0&0&0&0&0&0&0&0&0\\ \noalign{\medskip}0&0&0
&0&0&0&0&0&0&0&0&0\\ \noalign{\medskip}0&0&0&0&0&0&0&3/2&0&0&0&0
\\ \noalign{\medskip}0&0&0&0&0&0&0&0&0&0&0&0\\ \noalign{\medskip}0&0&0
&0&0&0&0&0&0&0&0&0\\ \noalign{\medskip}0&0&0&0&0&0&0&0&0&0&0&0
\\ \noalign{\medskip}0&0&0&3&0&0&0&0&0&0&0&0\\ \noalign{\medskip}0&0&0
&0&0&0&0&0&3/2&0&0&0\\ \noalign{\medskip}0&0&0&3/2&0&0&0&0&0&0&0&0
\\ \noalign{\medskip}0&0&3/2&0&0&0&0&0&0&0&0&0\\ \noalign{\medskip}0&0
&0&0&0&0&0&0&0&0&0&0\\ \noalign{\medskip}0&0&0&0&3/2&0&0&0&0&0&0&0
\\ \noalign{\medskip}0&0&0&0&0&0&0&0&0&3/2&0&0\\ \noalign{\medskip}0&0
&0&0&0&0&0&0&0&0&0&0\\ \noalign{\medskip}0&0&0&0&0&0&0&0&0&0&0&0
\\ \noalign{\medskip}3/2&0&0&0&0&0&0&0&0&0&0&0\\ \noalign{\medskip}0&0
&0&0&0&0&0&0&0&0&0&0\\ \noalign{\medskip}0&0&0&0&0&0&0&0&0&0&3/2&0
\\ \noalign{\medskip}0&0&0&0&0&0&0&0&0&0&0&0\\ \noalign{\medskip}0&0&0
&0&0&0&0&0&0&0&0&0\\ \noalign{\medskip}0&3/2&0&0&0&0&0&0&0&0&0&0
\\ \noalign{\medskip}0&0&0&0&0&0&3/2&0&0&0&0&0\\ \noalign{\medskip}0&0
&0&0&0&0&0&0&0&0&0&0\end {array} \right] $$

\end{document}